\crefname{hypothesis}{Hypothesis}{Hypotheses}
\newcommand{\norm}[1]{\left \| #1 \right \|}
 \newcommand{\lsup}[1]{\underset{#1\to\infty}{\overline{\lim}}}
\title{The Kinetic Limit of Balanced Neural Networks}
\author{  James MacLaurin   \thanks{Department of Mathematical Sciences. 
    New Jersey Institute of Technology, \email{james.maclaurin@njit.edu}
  }
  \and
  Pedro Vilanova  \thanks{Stevens Institute of Technology, \email{pedro.vilanova@gmail.com}
  } 
  }
\begin{document}
\maketitle

\begin{abstract}
The theory of `Balanced Neural Networks' is a very popular explanation for the high degree of variability and stochasticity in the brain's activity. Roughly speaking, it entails that typical neurons receive many excitatory and inhibitory inputs. The network-wide mean inputs cancel, and one is left with the stochastic fluctuations about the mean. In this paper we determine kinetic equations that describe the population density. The intrinsic dynamics is nonlinear, with multiplicative noise perturbing the state of each neuron. The equations have a spatial dimension, such that the strength-of-connection between neurons is a function of their spatial position. Our method of proof is to decompose the state variables into (i) the network-wide average activity, and (ii) fluctuations about this mean. In the limit, we determine two coupled limiting equations. The requirement that the system be balanced yields implicit equations for the evolution of the average activity.  In the large $n$ limit, the population density of the fluctuations evolves according to a Fokker-Planck equation. If one makes an additional assumption that the intrinsic dynamics is linear and the noise is not multiplicative, then one obtains a spatially-distributed `neural field' equation.
\end{abstract}

\section{Introduction}

In theoretical neuroscience, it is widely conjectured that neurons are typically dynamically balanced, with a high number of excitatory and inhibitory inputs \cite{Shadlen1994,Tsodyks1995,Amit1997,Vreeswijk1996,VanVreeswijk1998,Shadlen1998,Engelken2022}. It is thought that the dynamic balance could explain the high degree of stochasticity and variability in cortical discharge, which is indicated by the fact that the coefficient of variation in cortical spike trains is typically $O(1)$ \cite{Softky1993}. Roughly speaking, the theory is that the mean excitation and inhibition approximately `cancel', and what is left are the stochastic fluctuations about the mean \cite{Shadlen1994,Amit1997}.  

Early work by Van Vreeswijk and Sompolinsky \cite{VanVreeswijk2003} determined conditions for the existence of a balanced state by averaging over all times, and all realizations of the network. Variants of this method were also employed by Rosenbaum and Doiron \cite{Rosenbaum2014}, and Darshan, Hansel and Van Vreeswijk \cite{Darshan2018}. The approach of this article is more geared towards a dynamical systems approach: that is, we wish to determine conditions under which there exists an autonomous flow operator that describes the time-evolution of the network (in the large size limit). This approach is particularly useful for studying situations for which the system is out-of-equilibrium, or for which there does not exist a unique globally-attracting fixed point (such as if there is a limit cycle, or there is multistability).  

This balanced paradigm has proved extremely popular and has been explored in numerous directions. Some applications include: explaining oscillations and rhythms in brain activity \cite{Brunel2000}, UP / DOWN transitions \cite{Tartaglia2017}, working memory models \cite{BrunelWang2001Working,Lim2014,Rubin2017}, pattern formation and spatially-distributed neural activity \cite{Litwin-Kumar2012,Rosenbaum2014,Rosenbaum2017}. Other applications have explored how balanced networks can process sensory cues \cite{Toyoizumi2024}. Pehlevan and Sompolinsky analyze how sparse balanced networks respond selectively to inputs \cite{Pehlevan2014}. Hansel and Mato determined a range of bifurcation in balanced networks of excitation and inhibition \cite{Hansel2002}. Monteforte and Wolf \cite{Monteforte2010} computed the Lyapunov Exponents of sparse balanced networks, obtained a precise understanding of the chaotic nature of the networks. Boerlin, Machens and Deneve analyze how balanced networks can represent information in their spikes \cite{Boerlin2013}. Kadmon \cite{Kadmon2020} examines how balanced networks can enable predictive coding.

We determine the kinetic (large $n$ limit) of a network with many similar characteristics to the original model of Van Vreeswijk and Sompolinsky \cite{Vreeswijk1996,VanVreeswijk1998,VanVreeswijk2003}. It is an all-to-all network, consisting of $n \gg 1$ excitatory neurons and $n \gg 1$ inhibition neurons. The excitatory neurons excite all of the other neurons, and the inhibitory neurons inhibit all of the other inhibitory neurons. The neurons are embedded in a manifold, and the strength-of-interaction depends on their respective positions (this is a reasonably common assumption, see for instance \cite{Bressloff2012,Avitabile2024}). Each neuron is perturbed by multiplicative white noise, which primarily represents anomalous inputs from other parts of the brain. The interaction strength is scaled as $n^{-1/2}$, so that the sum of the absolute values of all of the inputs to any particular neuron diverges with $n$. However under conditions to be outlined further below, the system does not blow up, because the excitatory and inhibitory inputs balance. 

% that was inspired by the model of Van Vreeswijk and Sompolinsky \cite{Vreeswijk1996,VanVreeswijk1998}. It consisted of two types of neurons: $n \gg 1$ excitatory neurons, and $n \gg 1$ inhibitory neurons. Each neuron is connected to every other neuron. Excitatory neurons excite all of the other neurons and inhibitory neurons inhibit all of the other neurons.  The spiking of neurons is Poissonian and stochastic (representing the popular view that the dominant source of stochasticity in neurons is spike-transmission failure \cite{Manwani1999}), and the synaptic response is given by a linear ordinary differential equation \cite{Mcdonnell2017}. 

%Note that in most high-dimensional neuronal network models, there are $O(n)$ sources of noise (whether Poissonian or Brownian), i.e. the noise is directly afferent on the neuron \cite{Baladron2012,Chevallier2017,Locherbach2018,Duval2022}. However in our model, there are $O(n^2)$ sources of noise, i.e. the noise is afferent on the synaptic connections, rather than the neurons themselves.

The scaling of the interaction is different from the standard $n^{-1}$ scaling for particle systems with weak interactions (i.e. for Mckean-Vlasov systems \cite{Sznitman1989,Baladron2012,DeMasi2014,Jabin2017,Lucon2020a,Chaintron2021,Bramburger2023,Avitabile2024} or high-dimensional Poissonian chemical reaction networks \cite{Anderson2015,Chevallier2017,Agazzi2018,AgatheNerine2022,Avitabile2024_2}). The scaling factor is $n^{-1/2}$ (so the effect of one neuron on another is relatively stronger than in the Mckean-Vlasov case). To obtain a hydrodynamic limit, we make use of the fact that the `balanced state' is strongly attracting. This effectively damps down the $O(n^{-1/2})$ terms that could cause blowup, and we are left with the fluctuations about the mean \cite{Katzenberger1991}. See for instance the textbook by Berglund and Gentz for an overview of methods for studying stochastic systems near strongly-attracted manifolds \cite{Berglund2006}, or the recent papers \cite{Parsons2017a,Adams2025} that study the quasi-steady distribution for stochastic systems near attracting manifolds over long timescales.  %The reason that the dynamics does not blowup is that each neuron receives a balance of excitatory and inhibitory inputs which approximately cancel out. This strong scaling requires the development of new techniques to make sense of the high-dimensional limit.% In the theoretical neuroscience community it is widely believed that there exists a hydrodynamic limit for balanced networks like in this paper. However (to the knowledge of these authors) there does not yet 

There are several other recent works that have explored the effect of inhibition on mean-field interacting particle systems. Erny, Locherbach and Loukianova consider interacting Hawkes Processes in the diffusive regime (so that interactions are scaled by $(n^{-/2})$). In their model the kinetic limit always remains balanced because the excitatory and inhibitory neurons have the same dynamics \cite{Erny2021}. By contrast in this paper, the different timescales and dynamics associated to the excitatory and inhibitory neurons leads to a nontrivial algebraic identity for the evolution of the mean activity. Pfaffelhuber, Rotter and Stiefel \cite{Pfaffelhuber2022} consider a system of Hawkes Process that is similar to that of \cite{Erny2021}, and is in the balanced regime. Duval, Lucon and Pouzat \cite{Duval2022} consider a network of Hawkes Processes with multiplicative inhibition. In this work the effect of one neuron on another scales as $O(n^{-1})$.

In an earlier paper \cite{MacLaurin2025Balanced} we determined the kinetic limit of a different high-dimensional balanced network. In \cite{MacLaurin2025Balanced}, the intrinsic synaptic dynamics is linear, and this facilitated the analysis because the dynamics could be decomposed into equations for the mean and variance. By contrast, in this paper the intrinsic  dynamics is nonlinear, and we must determine a new means of decomposing the dynamics into a projection onto the balanced manifold, and nonlinear fluctuations about it. To the best of the knowledge of the authors, \cite{MacLaurin2025Balanced} and this paper represent the first rigorous derivation of the kinetic limit of balanced neural networks.

A very recent preprint of Quininao and Touboul \cite{quininao2025balanced} performs several numerical simulations of a set of balanced SDEs very similar to this paper. Their simulations reveal that in many situations a balanced state exists. They conjecture that an algebraic criterion that is similar to this paper must be satisfied in a balanced state. Perhaps the most salient difference between the conjectures of \cite{quininao2025balanced} and the results of this paper is that in order that we may prove that the population density concentrates at a unique distribution as $n\to\infty$, we also require that the system-wide mean excitation / inhibition is stable to small perturbations. It is hard to see how this assumption could be significantly weakened, because if there were no linear stability, then small stochastic perturbations would be enormously amplified by the strong interaction.

It is also worth comparing these equations to the `spin-glass' dynamical models \cite{Sompolinsky1981a,Crisanti1993,BenArous1995,BenArous2006,Faugeras2015,Crisanti2018,Helias2019,MacLaurin2024}. These models also have $n^{-1/2}$ scaling of the interactions. However, the interactions themselves are mediated by static Gaussian random variables, of zero mean and unit variance. These models have also been heavily applied to neuroscience \cite{Moynot2002,Faugeras2015,Parisi2023}. One of the most important differences is that in the spin glass model, an individual neuron has both excitatory and inhibitory effects on other neurons, which violates Dale's Law \cite{Bhuiyan2022}. However, in the balanced model of this paper, individual neurons are either purely excitatory, or purely inhibitory. Indeed, the hydrodynamic limiting equations are different (one can compare the limiting equations of this paper to for instance the equations in \cite{Crisanti2018,MacLaurin2024}).

%This paper is representative of a broader push in recent years to explore interacting particle systems in new directions.

\textit{Notation:}

For any Polish Space $\mathcal{X}$, we let $\mathcal{C}(\mathcal{X})$ denote all continuous functions $\mathcal{X} \mapsto \mathbb{R}$ and we let $\mathcal{P}(\mathcal{X})$ denote the set of all probability measures on $\mathcal{X}$. $\mathcal{P}(\mathcal{X})$ is always endowed with the topology of weak convergence, i.e. generated by open sets of the form, for any continuous bounded function $f \in \mathcal{C}_b(\mathcal{X})$ and $\epsilon > 0$,
\[
\bigg\lbrace \nu \in \mathcal{P}(\mathcal{X}) \; : \; \big| \mathbb{E}^{\nu}[f] - c \big| < \epsilon  \bigg\rbrace .
\]
Let $d_W$ be the Wasserstein Metric on $\mathbb{R}^2$. That is, for any $\mu,\nu \in \mathcal{P}\big( \mathbb{R}^2 \big)$,
\begin{align}
d_W(\mu,\nu) = \inf\bigg\lbrace \mathbb{E}^{\zeta}\big[ |y_e - z_e| + |y_i - z_i|  \big] \bigg\rbrace
\end{align}
where the infimum is over all couplings such that the law of $(y_e,y_i)$ is $\mu$, and the law of $(z_e,z_i)$ is $\nu$.
%Another important point of comparison are networks with a strong short-range coupling, including \cite{Blaustein2023}. Again,

\section{Outline of Model and Assumptions}
\label{Section Assumotions}
We consider a balanced network of SDEs. There are $n \gg 1$ excitatory neurons and $n \gg 1$ inhibitory neurons. It is assumed that the neurons reside in a compact domain $\mathcal{E} \subseteq \mathbb{R}^d$. The $j^{th}$ excitatory and $j^{th}$ inhibitory neuron are each assigned a position $x^j_n \in \mathcal{E}$.

First, we require that the spatial distribution of the neurons throughout $\mathcal{E}$ converges. Let $\hat{\mu}^n(x) = n^{-1} \sum_{j\in I_n} \delta_{x^j_n} \in \mathcal{P}(\mathcal{E})$ denote the empirical measure generated by the positions.
\begin{hypothesis}\label{Hypothesis Distribution}
 It is assumed that there exists a measure $\kappa \in \mathcal{P}\big( \mathcal{E} \big)$ such that
\begin{align}
\lim_{n\to\infty} \hat{\mu}^n(x)  = \kappa.
\end{align}
\end{hypothesis}
The average connectivity strength is indicated by continuous functions $\big\lbrace \mathcal{K}_{\alpha\beta} \big\rbrace_{\alpha,\beta \in \lbrace e,i \rbrace} \subset \mathcal{C}\big( \mathcal{E} \times \mathcal{E} \big)$. We make the following finite-rank assumption.
\begin{hypothesis}
There exists a positive integer $M > 0$ and continuous functions $\lbrace h_i \rbrace_{1\leq i \leq M} \subset \mathcal{C}(\mathcal{E})$ and constants $\lbrace c_{\alpha\beta,ij} \rbrace_{i,j \leq M \fatsemi \alpha,\beta \in \lbrace e,i \rbrace} \subset \mathbb{R} $ such that 
\begin{align}
\mathcal{K}_{\alpha\beta}( x, x') = \sum_{i,j=1}^M c_{\alpha\beta,ij}  h_i(x) h_j(x').
\end{align}
The basis functions are such that
\begin{align}
\int_{\mathcal{E}} h_i(x) h_j(x)  d\kappa(x) = \delta_{ij}.
\end{align}
\end{hypothesis}
We let $\mathcal{C}_M(\mathcal{E} \times \mathcal{E})$ be the set of all functions $K$ of the form
\begin{equation}
K(x,x') =  \sum_{i,j=1}^M c_{ij}  h_i(x) h_j(x')
\end{equation}
for some constants $\lbrace c_{ij} \rbrace_{i,j \leq M}$. We let $\mathcal{C}_M(\mathcal{E})$ be the set of all functions of the form
\[
g(x) = \sum_{i=1}^M a_i h_i(x)
\]
for some constants $\lbrace a_i \rbrace_{i\leq M}$.
\begin{remark}
 Let us underscore that these assumptions are consistent with a `mean-field' model with no spatial extension. One simply takes $\mathcal{E}$ to consist of a single point $0$, and take $M=1$ and $\mathcal{K}_{\alpha\beta}(0,0) := 1$.   
\end{remark}

The state of the $j^{th}$ excitatory neuron is written as $z^j_{e,t} \in \mathbb{R}$, and the state of the $j^{th}$ inhibitory neuron is written as $z^j_{i,t} \in \mathbb{R}$. The dynamics is assumed to be of the form
\begin{align}
dz^j_{e,t}  =& \bigg\lbrace  f_e(z^j_{e,t}) + n^{-1/2} \sum_{k\in I_n}\bigg(  \mathcal{K}_{ee}(x^j_n , x^k_n) G_{ee}(z^k_{e,t}) - \mathcal{K}_{ei}(x^j_n , x^k_n)  G_{ei}(z^k_{i,t}) \bigg) \bigg\rbrace dt \nonumber \\ &+ \sigma_e(x^j_n , z^j_{e,t}) dW^j_{e,t} \\
dz^j_{i,t} =& \bigg\lbrace f_i(z^j_{i,t}) + n^{-1/2} \sum_{k\in I_n}\bigg( \mathcal{K}_{ie}(x^j_n , x^k_n) G_{ie}(z^k_{e,t}) -\mathcal{K}_{ii}(x^j_n , x^k_n)  G_{ii}(z^k_{i,t}) \bigg) \bigg\rbrace dt  \nonumber \\ &+ \sigma_i(x^j_n , z^j_{i,t}) dW^j_{i,t} .
\end{align}
Here $\lbrace W^j_{e,t} , W^j_{i,t} \rbrace_{j\in I_n}$ are independent Brownian Motions. The initial conditions $\lbrace z^j_{e,0} , z^j_{i,0} \rbrace_{j\in I_n}$ are constants. We next make some assumptions on the regularity of the functions.

\begin{hypothesis} 
\begin{itemize}
\item It is assumed that the functions $ \big\lbrace f_{\alpha} , G_{\alpha\beta}  \big\rbrace_{\alpha,\beta \in \lbrace e,i \rbrace} \subset \mathcal{C}^2(\mathbb{R})$ are twice continuously differentiable, with all derivatives upto second order uniformly bounded. Also $\sigma_e$ and $\sigma_i$ are twice continuously differentiable
\item It is assumed that $|\sigma_e|$ and $|\sigma_i|$ are uniformly bounded by a constant $C_{\sigma} > 0$.
\end{itemize}
\end{hypothesis}
%\item It is assumed that $\sigma_e,\sigma_i : \mathcal{E} \times \mathbb{R} \mapsto \mathbb{R}$ are globally Lipschitz, with Lipschitz constant $C_{\sigma}$.

\subsection{Balanced Assumptions}
We are going to see that, as $n\to\infty$, the dynamics is pulled towards a balanced state. To make precise sense of this, we will define a `Balanced Manifold'. Roughly-speaking, the manifold will consist of all systems such that (i) excitation balances inhibition, and (ii) it is stable to perturbations in the mean excitation / inhibition throughout the system. We first require some additional definitions.

Let $\mathcal{P}_*\big( \mathcal{E} \times \mathbb{R}^2 \big)$ denote all probability measures $\mu$ such that for all $a\leq M$,
 \begin{align}
 \mathbb{E}^{(x,y_e,y_i) \sim \mu} \big[  h_a(x) y_e \big] &=  \mathbb{E}^{(x,y_e,y_i) \sim \mu} \big[ h_a(x) y_i \big] = 0 \text{ and }\\
  \mathbb{E}^{(x,y_e,y_i) \sim \mu} \big[   y_e^2 + y_i^2 \big] &< \infty.
 \end{align}
   Let $\mathcal{P}_*\big(\mathcal{E} \times \mathcal{C}([0,T],  \mathbb{R}^2) \big)$ denote all measures $\mu$ such that for all $t\leq T$, the marginal at time $t$ is in $\mathcal{P}_*\big( \mathcal{E} \times \mathbb{R}^2 \big)$. 
   
For $\alpha \in \lbrace e,i \rbrace$ and $a \leq M$, define the function $\mathcal{G}^a_{\alpha}:  \mathcal{C}_M(\mathcal{E})^2 \times \mathcal{P}_*\big( \mathcal{E} \times \mathbb{R}^2 \big) \mapsto \mathbb{R}$ to be such that
 \begin{align}
 \mathcal{G}_{\alpha}^a(v, \mu) =& \int_{\mathcal{E}} h_a(z) \mathbb{E}^{(x,y_e,y_i) \sim \mu} \bigg[   \mathcal{K}_{\alpha e}(z,x)  G_{\alpha e}(y_{e} + v_e(x) )  -  \mathcal{K}_{\alpha i}(z,x)  G_{\alpha i}(y_{i} + v_i(x) )  \bigg] d\kappa(z) .
 \end{align} 
 We write $\mathcal{G}:  \mathcal{C}_M(\mathcal{E})^2 \times \mathcal{P}_*\big( \mathcal{E} \times \mathbb{R}^2 \big) \mapsto \mathbb{R}^{2M}$ to be such that
 \begin{align}
 \mathcal{G}(v,\mu) =  \big( \mathcal{G}_{\alpha}^a(v, \mu)  \big)_{a\leq M , \alpha\in \lbrace e,i \rbrace}.
 \end{align}
  Let $\mathcal{J}: \mathcal{C}_M(\mathcal{E})^2 \times \mathcal{P}_*\big( \mathcal{E} \times \mathbb{R}^2 \big) \mapsto \mathbb{R}^{2M \times 2M}$ be the Jacobian of the map $v \mapsto \mathcal{G}(v,\mu)$. That is, $\mathcal{J} = \big( \mathcal{J}^{ab}_{\alpha\beta} \big)_{\alpha,\beta \in \lbrace e,i\rbrace \fatsemi a,b \leq M}$, where for $a,b \leq M$ and $\alpha \in \lbrace e,i \rbrace$, define
  \begin{align}
  \mathcal{J}^{ab}_{\alpha e}(v,\mu) =& \lim_{\epsilon \to 0^+} \epsilon^{-1} \big( \mathcal{G}^a_{\alpha}(v_{e} + \epsilon h_b, v_i , \mu) - \mathcal{G}^a_{\alpha}(v_e,v_i,\mu) \big) \\
    \mathcal{J}^{ab}_{\alpha i}(v,\mu) =& \lim_{\epsilon \to 0^+} \epsilon^{-1} \big( \mathcal{G}^a_{\alpha}(v_{e} , v_i + \epsilon h_b , \mu) - \mathcal{G}^a_{\alpha}(v_e,v_i,\mu) \big).
  \end{align} 
  We notice that, since $G_{\alpha \beta}$ is differentiable,
   \begin{align}
  \mathcal{J}^{ab}_{\alpha e}(v_e,v_i,\mu) &= \int_{\mathcal{E}} h_a(x') \mathbb{E}^{(x,y_e,y_i) \sim \mu} \bigg[  \mathcal{K}_{\alpha e}(x',x)  \dot{G}_{\alpha e}(y_{e} + v_e(x) ) h_b(x)  \bigg] d\kappa(x') \label{eq: Jacobian e} \\
    \mathcal{J}^{ab}_{\alpha i}(v_e,v_i,\mu) &= - \int_{\mathcal{E}} h_a(x') \mathbb{E}^{(x,y_e,y_i) \sim \mu} \bigg[   \mathcal{K}_{\alpha i}(x',x)   \dot{G}_{\alpha i}(y_{i} + v_i(x) ) h_b(x)  \bigg] d\kappa(x') .\label{eq: Jacobian i} 
  \end{align}  
  Define the \textit{Balanced Manifold} $\mathcal{B} \subset \mathcal{C}_M(\mathcal{E} )^2 \times \mathcal{P}_*\big( \mathcal{E} \times \mathbb{R}^2 \big)$, to consist of all $(v_e,v_i,\mu)$ such that (i) $\mathcal{G}(v,\mu) = 0$ identically, and (ii) every eigenvalue of $\mathcal{J}(v_e,v_i,\mu)$ has real part that is strictly negative, and (iii) the marginal of $\mu$ over its first variable is $\kappa$. %Let $ \mathcal{P}_*\big( \mathcal{E} \times \mathcal{C}\big( [0,T] , \mathbb{R}^2 \big) \big)$ consist of all $\mu$ such that, writing $\mu_t \in \mathcal{P}_*\big( \mathcal{E} \times \mathbb{R}^2 \big)$ to be the marginal at time $t$, $\mu_t \in \mathcal{P}_*\big( \mathcal{E} \times \mathbb{R}^2 \big)$ for all $t\leq [0,T]$.
  
  Here and below, we need to decompose $z_{\alpha,t}$ into its component in $\mathcal{C}_M(\mathcal{E}  )$, and the component orthogonal to it. To this end, define the projection matrix $Q \in \mathbb{R}^{M\times M}$ to have elements
  \begin{align}
  Q_{pq} = n^{-1}\sum_{j \in I_n} h_p(x^j_n) h_q(x^j_n).
  \end{align}
  It follows from Hypothesis \ref{Hypothesis Distribution} that
  \begin{align}
  \lim_{n\to\infty} Q = \mathbf{I}.
  \end{align}
  We will thus assume throughout this paper that $n$ is large enough that 
  \begin{equation}
  \det(Q) > 1/2.
  \end{equation}
  Next define 
  \begin{align}
  v^a_e(t) =& n^{-1} \sum_{k\in I_n} \sum_{b = 1}^{M} Q^{-1}_{ab}  z^k_{e,t} h_b(x^k_{n})  \\
    v^a_i(t) =& n^{-1} \sum_{k\in I_n} \sum_{b = 1}^{M} Q^{-1}_{ab} z^k_{i,t} h_b(x^k_{n}) 
  \end{align}
  and write $v(t) = \big( v^p_\alpha(t) \big)_{\alpha\in \lbrace e,i \rbrace , p \leq M} \in \mathbb{R}^{2M}$. Define
  \begin{align}
  y^j_{e,t} &= z^j_{e,t} -  \sum_{a = 1}^M v^a_e(t) h_a(x^j_n) \\
    y^j_{i,t} &= z^j_{i,t} -  \sum_{a = 1}^M v^a_i(t) h_a(x^j_n).
  \end{align}
  Notice that for any $a \leq M$ and $\alpha\in \lbrace e,i\rbrace$,
  \begin{align}
 \sum_{j\in I_n} h_a(x^j_n)    y^j_{\alpha,t} = 0.
  \end{align}
  Define the empirical measure
  \begin{align}
  \hat{\mu}^n = n^{-1}\sum_{j\in I_n} \delta_{x^j_n , y^j_{e} , y^j_i} \in \mathcal{P}_*\big( \mathcal{E} \times \mathcal{C}\big( [0,T] , \mathbb{R}^2 \big) \big)
  \end{align}
  and write the marginal at time $t$ as
    \begin{align}
  \hat{\mu}^n_t = n^{-1}\sum_{j\in I_n} \delta_{x^j_n , y^j_{e,t} , y^j_{i,t}} \in \mathcal{P}_*\big( \mathcal{E} \times  \mathbb{R}^2   \big).
  \end{align}
 Next, we require that the initial empirical measure converges to be on the Balanced Manifold.
  \begin{hypothesis} \label{Hypothesis Initial Convergence}
  There exists a measure $\mu_0 \in \mathcal{P}_*(\mathcal{E} \times \mathbb{R}^2)$ such that
  \begin{align}
  \lim_{n\to\infty} \hat{\mu}^n_0 = \mu_0
  \end{align}
and
  \begin{align}
  \mathbb{E}^{\mu_0}\big[ y_{e}^2 + y_i^2 \big] < \infty.
  \end{align}  
  Write $\mu_{0,x} \in \mathcal{P}\big( \mathbb{R}^2 \big)$ to be the law of $(y_e,y_i)$, conditioned on $x$. It is assumed that $\mu_{0,x}$ has a density $p_{0,x} \in L^2(\mathbb{R}^2)$. Also, there exists $\big( \bar{v}_{e}(0), \bar{v}_{i}(0) \big) \in \mathbb{R}^2$ such that
  \begin{align}
  \lim_{n\to\infty} \big| v_{e}(0) - \bar{v}_{e}(0) \big| &= 0 \\
    \lim_{n\to\infty} \big| v_{i}(0) - \bar{v}_{i}(0) \big| &= 0,
  \end{align}
  and it holds that
  \begin{align}
  \big( \bar{v}_e(0) , \bar{v}_i(0) , \mu_0 \big) \in \mathcal{B}.
  \end{align}
%  Finally, it is assumed that there exists a family of measures $\big\lbrace \mu_x \big\rbrace_{x\in \mathcal{E}} \subset \mathcal{P}(\mathbb{R}^2)$ such that $x \mapsto \mu_x$ is measurable and
 % \begin{align}
%  \mu_0(A \times B) = \int_{A} \mu_x(B) \kappa(dx).
%  \end{align}
  \end{hypothesis}
  
  \section{Main Result}
 We start by outlining the limiting population average and probability distribution. 
  
  \begin{lemma} \label{Lemma Existence of Hydro Limit}
 There exists $ \bar{v}_e , \bar{v}_i \in \mathcal{C}\big([0,T] , \mathcal{C}_M(\mathcal{E}) \big)$ and $\big\lbrace \mu_{x} \big\rbrace_{x\in \mathcal{E}}$ with the following properties. First,
 \begin{equation}
x\mapsto \mu_x \text{ is continuous.}
\end{equation}
 For each $x\in \mathcal{E}$, $\mu_{x} \in \mathcal{P} \big( \mathcal{C}([0,T], \mathbb{R}^2) \big)$ is the probability law of stochastic processes $(y_{e,x} , y_{i,x})$, that are solutions to the following SDEs
 \begin{align}
 dy_{e,x}(t) =& \bigg\lbrace  f_e\big(  y_{e,x}(t) + \bar{v}_{e}(x,t) \big) -\sum_{a=1}^M h_a(x) \int_{\mathcal{E}} h_a(x') \mathbb{E}^{u \sim \mu_{x'}}\big[ f_e\big( u_{e,t}+ \bar{v}_{e}(x',t)  \big) \big] d\kappa(x')  \bigg\rbrace dt \nonumber \\ &+ \sigma_e\big( x,  y_{e,x}(t) + \bar{v}_{e}(x,t) \big)dW_{e,t}   \label{eq: limiting y e x t} \\
  dy_{i,x}(t) =& \bigg\lbrace  f_i \big(  y_{i,x}(t) + \bar{v}_{i}(x,t) \big)  -\sum_{a=1}^M  h_a(x) \int_{\mathcal{E}} h_a(x') \mathbb{E}^{u  \sim \mu_{x'}}\big[ f_i\big( u_{i,t} + \bar{v}_{i}(x',t) \big) \big] d\kappa(x')  \bigg\rbrace dt  \nonumber \\ & + \sigma_i\big( x,  y_{i,x}(t) + \bar{v}_{i}(x,t) \big) dW_{i,t} . \label{eq: limiting y i x t} 
 \end{align}
 The law of $(y_{e,x}(0), y_{i,x}(0))$ is $\mu_{0,x} \in \mathcal{P}\big( \mathbb{R}^2 \big)$. Define $\mu \in \mathcal{P} \big( \mathcal{E} \times \mathcal{C}([0,T], \mathbb{R}^2) \big)$ to be such that for measurable $A \subseteq \mathcal{E}$ and measurable $B \subseteq \mathcal{C}([0,T], \mathbb{R}^2)$,
  \begin{align}
 \mu\big( A \times B \big) = \int_A \mu_x(B) \kappa(dx).
 \end{align}
Let $\mu_t \in \mathcal{P}\big( \mathcal{E} \times \mathbb{R}^2 \big)$ be the marginal of $\mu$ at time $t$. There exists $\eta > 0$ such that for all $t < \eta$,
  \begin{equation}
 \big( \bar{v}_e(t) , \bar{v}_i(t) , \mu_t \big) \in \mathcal{B}.
 \end{equation} 
We take $\eta \in [0,\infty ]$ to be as large as possible. The above specification of $(\bar{v}_e, \bar{v}_i , \mu)$ is unique for all times less than or equal to $\eta$.
 %It follows from classical SDE theory that for any $ \bar{v}_e , \bar{v}_i \in \mathcal{C}\big([0,T] , \mathcal{C}_M(\mathcal{E}) \big)$ and any $x\in \mathcal{E}$, there exists a unique strong solution to \equerry{eq: limiting y e x t} and \eqref{eq: limiting y i x t} \cite{Karatzas1991}.
% where for measurable $A \subseteq \mathcal{E}$ and $B \subseteq \mathcal{C}([0,\eta) , \mathbb{R}^2)$,
  \end{lemma}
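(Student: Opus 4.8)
The plan is to use the balance constraint to eliminate the population average, turning the limiting system into a closed McKean--Vlasov problem for the flow of conditional laws $\lbrace\mu_{x,t}\rbrace_{x\in\mathcal E}$, and then to solve that problem by a fixed-point argument over a short interval, which is continued up to the first time the spectral stability of $\mathcal J$ is lost. For the elimination step, observe that Hypothesis~\ref{Hypothesis Initial Convergence} gives $(\bar v_e(0),\bar v_i(0),\mu_0)\in\mathcal B$, so, writing $\bar v(0)=(\bar v_e(0),\bar v_i(0))$, we have $\mathcal G(\bar v(0),\mu_0)=0$ and every eigenvalue of $\mathcal J(\bar v(0),\mu_0)$ has strictly negative real part; in particular $0$ is not an eigenvalue, so, identifying $\mathcal C_M(\mathcal E)^2$ with $\mathbb R^{2M}$ through coefficient vectors in the basis $\lbrace h_p\rbrace$, the matrix $\mathcal J(\bar v(0),\mu_0)\in\mathbb R^{2M\times 2M}$ is invertible. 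I would then prove a quantitative implicit function theorem for $\mathcal G$, treating the measure argument as a parameter: the Newton map $\Psi_\mu(v)=v-\mathcal J(\bar v(0),\mu_0)^{-1}\mathcal G(v,\mu)$ has derivative $I-\mathcal J(\bar v(0),\mu_0)^{-1}\mathcal J(v,\mu)$, which is small near $(\bar v(0),\mu_0)$ by continuity of $\mathcal J$, so $\Psi_\mu$ is a uniform contraction in $v$ for $\mu$ in some ball $\mathcal U$ about $\mu_0$ in an appropriate Wasserstein metric on conditional laws; here one uses that $\mu\mapsto\mathcal G(v,\mu)$ and $\mu\mapsto\mathcal J(v,\mu)$ are Lipschitz in that metric, which follows from the boundedness of $\dot G_{\alpha\beta}$ and $\ddot G_{\alpha\beta}$ and of $\mathcal K_{\alpha\beta},h_a$ on the compact set $\mathcal E$, and from the fact that all the measures involved have first marginal $\kappa$. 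The unique fixed point defines a Lipschitz map $\mu\mapsto V(\mu)\in\mathbb R^{2M}$ with $\mathcal G(V(\mu),\mu)=0$, and after shrinking $\mathcal U$ the spectrum of $\mathcal J(V(\mu),\mu)$ stays strictly in the open left half-plane.

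Substituting $\bar v(t)=V(\mu_t)$ into \eqref{eq: limiting y e x t}--\eqref{eq: limiting y i x t} produces a genuine McKean--Vlasov system: the drift is Lipschitz in $y_{\alpha,x}$ because $\dot f_\alpha$ is bounded, Lipschitz in the current conditional laws in the Wasserstein metric because $\dot f_\alpha$ is bounded and $V$ is Lipschitz, and the diffusion coefficient $\sigma_\alpha(x,\cdot+\bar v_\alpha(x,t))$ is Lipschitz in $y_{\alpha,x}$ by the regularity of $\sigma_\alpha$. I would run the classical Banach fixed-point argument on the space $\mathcal C\big(\mathcal E,\mathcal P(\mathcal C([0,T'],\mathbb R^2))\big)$ of families of path laws continuous in $x$, with $T'>0$ small: given a candidate family staying in $\mathcal U$, form $\bar v(t)=V(\mu_t)$ from its time-$t$ marginals, solve the resulting $x$-parametrised SDEs with deterministic coefficients, read off the new family, and close the estimate by Gr\"onwall's inequality in the Wasserstein metric. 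A separate Gr\"onwall bound for $\mathbb E\big[y_{e,x}(t)^2+y_{i,x}(t)^2\big]$, using the linear growth of the drift and the boundedness of $\sigma_\alpha$, keeps the solution square-integrable uniformly in $x$; and continuity in $x$ of all the coefficients --- note that $\bar v_\alpha(\cdot,t)=\sum_a\bar v^a_\alpha(t)h_a(\cdot)\in\mathcal C_M(\mathcal E)$ --- together with weak continuity of the initial disintegration $x\mapsto\mu_{0,x}$, yields the required continuity of $x\mapsto\mu_x$. Finally, a short computation shows that $t\mapsto\mathbb E^{x\sim\kappa}\big[h_a(x)y_{\alpha,x}(t)\big]$ is constant, equal to its initial value $0$: the subtracted term $\sum_a h_a(x)\int_{\mathcal E}h_a(x')\,\mathbb E^{\mu_{x'}}\big[f_\alpha(\cdots)\big]\,d\kappa(x')$ is exactly what cancels $\mathbb E^{x\sim\kappa}[h_a(x)f_\alpha(\cdots)]$ once one uses $\int h_a h_b\,d\kappa=\delta_{ab}$. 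Hence $\mu_t\in\mathcal P_*(\mathcal E\times\mathbb R^2)$ with first marginal $\kappa$, and since $\mathcal G(\bar v(t),\mu_t)=0$ and the spectrum of $\mathcal J$ remains strictly stable for small $t$ by continuity, $(\bar v_e(t),\bar v_i(t),\mu_t)\in\mathcal B$ on $[0,T']$; thus $\eta\ge T'>0$.

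It remains to maximise $\eta$ and to prove uniqueness. Take $\eta$ to be the supremum of the times up to which a solution with all the stated properties exists; restarting the construction above from $(\bar v(t),\mu_t)$ at any time at which $\mathcal J$ is still invertible with strictly stable spectrum shows that $\eta$ cannot be reached while $(\bar v(\eta),\mu_\eta)$ is still in $\mathcal B$, so $[0,\eta)$ is precisely the set on which membership of $\mathcal B$ can be maintained. For uniqueness on $[0,\eta]$, two solutions that agree at $t=0$ must, by the local uniqueness in the implicit function theorem, satisfy $\bar v(t)=V(\mu_t)$ on a neighbourhood of every time at which $\mathcal J$ is invertible; covering the compact interval $[0,\eta]$ by finitely many such neighbourhoods, both solutions solve the same closed McKean--Vlasov system on $[0,\eta)$, hence coincide there by its uniqueness and then at $\eta$ by continuity.

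The main obstacle is the interplay between resolving the balance constraint and solving the McKean--Vlasov equation: one must verify that the map $V$ supplied by the implicit function theorem is Lipschitz in the Wasserstein distance on conditional laws, with an inverse-Jacobian bound uniform over a neighbourhood of $\mu_0$, so that the instantaneous algebraic feedback $\bar v(t)=V(\mu_t)$ does not spoil the contraction property of the McKean--Vlasov map; and one must keep careful track of the open spectral-stability condition on $\mathcal J$ in order to identify the maximal interval $[0,\eta)$ on which the limiting system remains on the balanced manifold.
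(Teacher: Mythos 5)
Your proposal is correct in outline, but it takes a genuinely different route from the paper. The paper never invokes an implicit function theorem: it first derives, by implicit differentiation of the constraint $\mathcal{G}(\bar v(t),\mu_t)=0$, the autonomous vector field $\dot{\bar v}=-\mathcal{L}\mathcal{H}$ of Corollary \ref{Corollary Autonomous Dynamics}, couples this ODE to the law map $\bar v\mapsto\Psi(\bar v)$ (the conditional path law produced by the SDEs \eqref{eq: limiting y e x t}--\eqref{eq: limiting y i x t} for a prescribed $\bar v$), shows $\Psi$ is Lipschitz by Gr\"onwall and that the combined vector field is Lipschitz as long as $\det\mathcal{J}\geq\delta$, and runs a single Picard iteration for the coupled pair; membership of the balanced manifold then follows because $\tfrac{d}{dt}\mathcal{G}(\bar v_t,\mu_t)=0$ along the flow and $\mathcal{G}=0$ holds initially by Hypothesis \ref{Hypothesis Initial Convergence}, with $\eta=\sup_\delta T_\delta$ governed by degeneration of $\det\mathcal{J}$. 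You instead resolve the constraint algebraically, $\bar v(t)=V(\mu_t)$ via a parametrized Newton/IFT contraction, and obtain a closed McKean--Vlasov problem in the measure alone, solved by a Sznitman-type fixed point. What each buys: the paper's route delivers the autonomous ODE of Corollary \ref{Corollary Autonomous Dynamics} (which it needs anyway) and requires only invertibility of $\mathcal{J}$ for continuation, but guarantees balance only through constraint propagation from $t=0$; your route gives $\mathcal{G}=0$ exactly by construction and only needs continuity of $\bar v$ in $t$, and you also make explicit the propagation of the orthogonality conditions defining $\mathcal{P}_*$, which the paper leaves implicit. Two small points to tighten in your sketch: the diffusion coefficient $\sigma_\alpha(x,\,\cdot\,+\bar v_\alpha(x,t))$ also depends on the law through $V(\mu_t)$, so it must be carried through the McKean--Vlasov contraction estimate (the same Lipschitz bounds suffice); and your uniqueness/continuation argument should be phrased so that both solutions' marginals are shown to stay in the common Wasserstein ball where $V$ is defined, e.g.\ by a bootstrap along the compact time interval, essentially as you indicate.
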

%  Henceforth we always assume that $\eta$ is the largest possible value such that the conditions of Lemma \ref{Lemma Existence of Hydro Limit} are satisfied ($\eta$ could be $\infty$).
% Define $\mu \in \mathcal{P}_*\big( \mathcal{E} \times \mathcal{C}([0,T], \mathbb{R}^2) \big)$ to be such that for measurable $A \subseteq \mathcal{E}$ and $B \subseteq \mathcal{C}([0,\eta) , \mathbb{R}^2)$,
% \begin{align}
% \mu\big( A \times B \big) = \int_A \mu_x(B) \kappa(dx).
% \end{align}
The main result of this paper is that $\big( v_e(t) , v_i(t) , \hat{\mu}^n_t \big)_{t < \eta}$  concentrate at $\big(\bar{v}_e(t) , \bar{v}_i(t), \mu_t\big)$ as $n\to\infty$.
\begin{theorem} \label{Theorem Bound Empirical Measure Convergence}
For any $T < \eta$, $\mathbb{P}$-almost-surely
\begin{align}
\lim_{n\to\infty}\bigg\lbrace  \sup_{t\leq T} d_W\big( \hat{\mu}^n_t , \mu_t \big) + \sup_{a\leq M} \sup_{t\leq T} \big| v_{e}^a(t) - \bar{v}_e^a(t) \big| + \sup_{a\leq M} \sup_{t\leq T} \big| v_{i}^a(t) - \bar{v}_i^a(t) \big|  \bigg\rbrace = 0.
\end{align}
where
\begin{equation}
\bar{v}_\alpha^a(t)  = \int_{\mathcal{E}} h_a(x) \bar{v}_\alpha(x,t) d\kappa(x). 
\end{equation}
\end{theorem}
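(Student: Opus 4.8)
\emph{Proof strategy.} The plan is to view this as a fast--slow (Tikhonov--Katzenberger) problem coupled with a propagation-of-chaos estimate. Starting from the decomposition $z^j_{\alpha,t}=y^j_{\alpha,t}+\sum_a v^a_\alpha(t)h_a(x^j_n)$ of Section \ref{Section Assumotions}, the finite-rank form of $\mathcal{K}_{\alpha\beta}$ together with the orthonormality $\int h_a h_b\,d\kappa=\delta_{ab}$ shows that the $n^{-1/2}$-interaction drift in the $z^j_{\alpha,t}$-equation is exactly $n^{1/2}\sum_a h_a(x^j_n)\,\mathcal{G}^a_\alpha(v_t,\hat\mu^n_t)$. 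Substituting this into the definitions of $v^a_\alpha(t)$ and $y^j_{\alpha,t}$, all $O(n^{1/2})$ contributions cancel identically in every $y^j_{\alpha,t}$-equation, leaving an $O(1)$ drift built from $f_\alpha$ minus an empirical mean of $f_\alpha$ (which in the limit is precisely the drift of \eqref{eq: limiting y e x t}--\eqref{eq: limiting y i x t}) plus a Brownian martingale of quadratic variation $O(1)$ and an $O(n^{-1})$ correction inherited from the $v$-dynamics; whereas
\[
dv^a_\alpha(t)=\big\{O(1)+n^{1/2}\,\mathcal{G}^a_\alpha(v_t,\hat\mu^n_t)\big\}\,dt+dE^a_\alpha(t),\qquad \langle E^a_\alpha\rangle_t=O(n^{-1}).
\]
Thus $v$ is the fast variable, slaved by the large drift $n^{1/2}\mathcal{G}(v_t,\hat\mu^n_t)$ toward the zero-set of $\mathcal{G}(\cdot,\hat\mu^n_t)$, while $\hat\mu^n$ is slow.

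The crux is to control the balance defect $g_t:=\mathcal{G}(v_t,\hat\mu^n_t)\in\mathbb{R}^{2M}$. An application of Itô's formula --- using that $\mathcal{G}$ is affine in $\mu$ and $C^2$ in $v$ with bounded derivatives (since $G_{\alpha\beta}\in\mathcal{C}^2$ with bounded derivatives and the $h_a$ are bounded) --- gives
\[
dg_t=n^{1/2}\,\mathcal{J}(v_t,\hat\mu^n_t)\,g_t\,dt+b_t\,dt+dN_t,
\]
where $\|b_t\|=O(1)$ (collecting the $f_\alpha$-drifts and the Itô corrections, which stay $O(1)$ because the Brownian quadratic variations of the $y^j$ are $O(1)$ but enter only through the $n^{-1}\sum_j$ averaging), $\mathcal{J}$ is the Jacobian in \eqref{eq: Jacobian e}--\eqref{eq: Jacobian i}, and $N$ is a martingale with $\langle N\rangle_t=O(n^{-1})$, so that $\sup_{t\le T}\|N_t\|\to0$ almost surely (BDG plus an exponential martingale inequality and Borel--Cantelli). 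Since $T<\eta$ and $[0,T]$ is compact, continuity of $\mathcal{J}$ and the spectral condition defining $\mathcal{B}$ yield a uniform $\lambda>0$ and a neighbourhood $\mathcal{U}$ of $\{(\bar v_t,\mu_t):t\le T\}$ on which $\mathcal{J}$ has a uniform strict Lyapunov function $P_t$, i.e.\ $P_t\mathcal{J}+\mathcal{J}^\top P_t\preceq-2\lambda P_t$. A Grönwall estimate for $g_t^\top P_t g_t$ on the event that $(v_s,\hat\mu^n_s)\in\mathcal{U}$ for $s\le t$ then shows that $g_t$ is contracted at rate $\sim n^{1/2}\lambda$: starting from the merely $o(1)$ initial defect $g_0$ (which is $o(1)$ because $(\bar v(0),\mu_0)\in\mathcal{B}$ by Hypothesis \ref{Hypothesis Initial Convergence}) it drops to $\sup_{t\in[\delta,T]}\|g_t\|=O(n^{-1/2})$ for each fixed $\delta>0$, while $\int_0^T n^{1/2}\|g_t\|\,dt$ stays bounded uniformly in $n$ (so $v$ moves only $o(1)$ through the $O(n^{-1/2}\log n)$ initial layer). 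Because the Lyapunov input holds only on $\mathcal{U}$, this estimate is run against the stopping time $\tau_n:=\inf\{t:(v_t,\hat\mu^n_t)\notin\mathcal{U}\}$, with $\tau_n\ge T$ established below.

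On $[0,\tau_n]$ the above gives uniform-in-$n$ control of $n^{-1}\sum_j\big((y^j_{e,t})^2+(y^j_{i,t})^2\big)$ (from boundedness of $f_\alpha,\sigma_\alpha$ and the $O(1)$ $y$-drift), hence tightness of $\{\hat\mu^n\}$ in $\mathcal{P}_*\big(\mathcal{E}\times\mathcal{C}([0,T],\mathbb{R}^2)\big)$, and --- using the cancellation of the $n^{1/2}$ term and $\int_0^T n^{1/2}\|g_t\|\,dt=O(1)$ --- tightness of $v_\cdot$ in $\mathcal{C}([0,T],\mathbb{R}^{2M})$. Passing to the limit in the martingale problem attached to the $O(1)$ $y$-dynamics identifies any subsequential limit $(\bar v,\mu)$ as a solution of \eqref{eq: limiting y e x t}--\eqref{eq: limiting y i x t}; the defect bound forces $\mathcal{G}(\bar v_t,\mu_t)=0$ for $t\in(0,T]$, the strict spectral condition on $\mathcal{J}$ and the identity ``$x$-marginal of $\mu_t$ equals $\kappa$'' pass to the limit, and the initial condition is pinned by Hypothesis \ref{Hypothesis Initial Convergence}; so by the uniqueness clause of Lemma \ref{Lemma Existence of Hydro Limit} the limit is precisely $(\bar v,\mu)$. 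In particular the limit remains in $\mathcal{U}$ on $[0,T]$, whence $\tau_n\ge T$ eventually and the estimates hold on all of $[0,T]$.

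Finally, to obtain the almost-sure, uniform-in-$t$ statement one couples the $n$-particle system to $n$ independent copies $\bar y^j_{\alpha,\cdot}$ of \eqref{eq: limiting y e x t}--\eqref{eq: limiting y i x t} with $x=x^j_n$, driven by the same $W^j_\alpha$; the implicit function theorem (applicable since $\mathcal{J}(\bar v_t,\mu_t)$ is invertible) produces a Lipschitz map $\Psi$ with $\bar v_t=\Psi(\mu_t)$ and $v_t=\Psi(\hat\mu^n_t)+O(n^{-1/2})$, reducing control of $\sup_{t\le T}\|v_t-\bar v_t\|$ to that of $\sup_{t\le T}d_W(\hat\mu^n_t,\mu_t)$; a Grönwall estimate on $n^{-1}\sum_j\sup_{s\le t}\big(|y^j_{e,s}-\bar y^j_{e,s}|+|y^j_{i,s}-\bar y^j_{i,s}|\big)$, whose inhomogeneity is an empirical-average fluctuation of size $O(n^{-1/2+\epsilon})$ with exponential concentration (McDiarmid/BDG), then gives the claim via Borel--Cantelli. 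The main obstacle is the second step: reconciling the exponentially fast, state-dependent contraction onto the balanced manifold with an initial defect that is only $o(1)$ rather than $O(n^{-1/2})$, and with the fact that the stabilising Jacobian is controlled only in a neighbourhood of the a priori unknown limiting manifold --- which is exactly what forces the bootstrap between the singular-perturbation estimate and the propagation-of-chaos estimate.
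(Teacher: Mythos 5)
Your proposal is essentially correct and, at the architectural level, matches the paper's proof: decompose into the projected mean $v$ and the fluctuations $y$ (your observation that the $O(n^{1/2})$ interaction cancels exactly in every $y^j$-equation is precisely the paper's \eqref{eq: y j e t}--\eqref{eq: y j i t}), use the $O(\sqrt{n})$ contraction coming from the spectral condition on $\mathcal{J}$ to slave $v$ to the empirical measure, couple the fluctuations to an auxiliary system built from the limit, and close a bootstrap with stopping times. Where you genuinely differ is the implementation of the slaving step: you track the balance defect $g_t=\mathcal{G}(v_t,\hat{\mu}^n_t)$, contract it with a Lyapunov matrix $P_t$ (which, incidentally, treats the non-symmetry of $\mathcal{J}$ more carefully than a bare eigenvalue bound), and then invert via the implicit function theorem to convert smallness of $g_t$ and of $d_W(\hat{\mu}^n_t,\mu_t)$ into smallness of $\|v_t-\bar{v}_t\|$. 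The paper instead writes a differential inequality directly for $\ell_t=\|v_t-\bar{v}_t\|^2$ (Lemma \ref{Lemma Bound the frowth of ell t}), linearizing $\mathcal{G}(\cdot,\hat{\mu}^n_t)$ around $\bar{v}_t$ by the mean-value theorem and using $\mathcal{G}(\bar{v}_t,\mu_t)=0$ to produce the $c_T\sqrt{\ell}\,d_W$ forcing; Corollary \ref{Corollary Bound ell t} is then the analogue of your IFT reduction. Your tightness/martingale-problem identification step has no counterpart in the paper and is in fact dispensable: since Lemma \ref{Lemma Existence of Hydro Limit} constructs $(\bar{v},\mu)$ in advance, the paper compares directly to the intermediate system \eqref{eq: y j e t approximate}--\eqref{eq: y j i t approximate}, in which only $v$ is replaced by $\bar{v}$ while the empirical projection term is retained, so the Gr\"onwall difference $q_t$ contains no law-of-large-numbers error and all LLN issues are isolated in Lemma \ref{Lemma Law of Large Numbers}.

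Two points in your write-up need repair, both within reach of tools you already invoke. First, you derive ``$\tau_n\ge T$ eventually'' from the subsequential-limit identification; weak convergence of the stopped processes yields only $\mathbb{P}(\tau_n<T)\to 0$, which does not deliver the almost-sure claim and leaves the conditioning of your estimates on $\{t\le\tau_n\}$ unresolved. The bootstrap should instead be closed as the paper does: run the defect and coupling estimates up to $\tau_n\wedge T$, show that on an exponentially likely event the controlled quantities remain strictly below the threshold defining the neighbourhood $\mathcal{U}$, conclude by continuity that $\tau_n>T$ on that event, and finish with Borel--Cantelli. Second, the claimed $O(n^{-1/2+\epsilon})$ exponential concentration of the Gr\"onwall inhomogeneity is not available under the stated hypotheses: because you couple to the limiting SDE rather than to the paper's intermediate system, the drift difference picks up $Q^{-1}-\mathbf{I}$, the discrepancy between $\hat{\mu}^n(x)$ and $\kappa$, and $\sup_{t\le T}d_W(\tilde{\mu}^n_t,\mu_t)$, for none of which Hypotheses \ref{Hypothesis Distribution} and \ref{Hypothesis Initial Convergence} provide a rate. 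Since the theorem asserts no rate, an almost-sure $o(1)$ inhomogeneity suffices, with exponential bounds reserved for the martingale terms where BDG-type estimates do apply.
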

We next outline a specific autonomous expression for the dynamics of $\big( \bar{v}_e(t) , \bar{v}_i(t) , \mu_t \big)_{t < \eta}$.  Write $\mathcal{L}( \bar{v}_e, \bar{v}_i , \mu  )  := \big(   \mathcal{L}^{ab}_{\alpha \beta} ( \bar{v}_e, \bar{v}_i , \mu  )  \big)_{\alpha,\beta \in \lbrace e,i \rbrace , a,b \leq M } \in \mathbb{R}^{2M \times 2M}$ to be the matrix inverse of $ \mathcal{J}(\bar{v}_e  , \bar{v}_i  , \mu  )$.
 \begin{corollary} \label{Corollary Autonomous Dynamics}
 For $1\leq a \leq M$, $\alpha \in \lbrace e,i \rbrace$, and all $t < \eta$,
 \begin{align}
 \frac{d\bar{v}^a_{\alpha}}{dt} = - \sum_{b=1}^M \sum_{\beta\in \lbrace e,i \rbrace} \mathcal{L}^{ab}_{\alpha\beta} ( \bar{v}_e(t), \bar{v}_i(t) , \mu_t ) \mathcal{H}_{\beta}^b( \bar{v}_e(t), \bar{v}_i(t) , \mu_t ) 
 \end{align}
where writing $v_{\alpha}(x) := \sum_{a=1}^M h_a(x) \bar{v}^a_{\alpha}(t)$,
  \begin{multline}
\mathcal{H}_{\alpha}^a(\bar{v}_e, \bar{v}_i , \mu) =  \sum_{b=1}^M  \mathbb{E}^{(x,y_{e}, y_{i}) \sim \mu}\bigg[ h_b(x) \bigg\lbrace c_{\alpha e,ab} \dot{G}_{\alpha e} \big( y_{e} + \bar{v}_{e}(x) \big) \big\lbrace f_e\big( y_{e} + \bar{v}_{e}(x)  \big) - \bar{f}_{e,x}( \bar{v}_e, \bar{v}_i , \mu) \big\rbrace   \\  -  c_{\alpha i,ab}  \dot{G}_{\alpha i} \big( y_{i} + \bar{v}_{i}(x) \big) \big\lbrace f_i\big( y_{i} + \bar{v}_{i}(x)  \big) - \bar{f}_{i,x}(\bar{v}_e, \bar{v}_i , \mu) \big\rbrace    \\ 
+  \frac{1}{2} c_{\alpha e,ab} \ddot{G}_{\alpha e} \big( y_{e} + \bar{v}_{e}(x) \big) \sigma_e^2\big( x,  y_{e} + \bar{v}_{e}(x)  \big) -  \frac{1}{2} c_{\alpha i,ab} \ddot{G}_{\alpha i} \big( y_{i} + \bar{v}_{i}(x) \big) \sigma_i^2\big( x,  y_{i} + \bar{v}_{i}(x) \big) \bigg\rbrace
\bigg] , \label{eq: autonomous v expression}
 \end{multline} 
 where for $\alpha\in\lbrace e,i\rbrace$ and $z \in \mathcal{E}$, $ \bar{f}_{\alpha,z}: \mathcal{C}_M(\mathbb{R})^2 \times \mathcal{P}_*(\mathbb{R}^2) \mapsto \mathbb{R}$ is such that
  \begin{align}
 \bar{f}_{\alpha,z}(v_e, v_i , \mu)  = \sum_{a=1}^M h_a(z) \mathbb{E}^{(x,y_{e}, y_{i}) \sim \mu}\bigg[  h_a(x) f_e\big( y_{\alpha} + v_{\alpha}(x)  \big) \bigg] .
 \end{align}
 Here, for $x\in \mathcal{E}$, measurable $A \subseteq \mathcal{E}$ and measurable $B \subseteq \mathbb{R}^2$,
 \[
 \mu_{t}(A \times B) = \int_A \int_B p_{t,x}(y) dy d\kappa(x).
 \]
and $p_{t,x} \in \mathcal{C}^2\big( \mathbb{R}^2 \big)$ is such that for all $y_e,y_i \in \mathbb{R}$ and all $0 < t < \eta$
 \begin{multline}
 \partial_t p_{t,x}(y_e,y_i) = - \partial_{y_e}  \bigg( p_{t,x}(y_e,y_i) \bigg\lbrace f_e\big(y_e + \bar{v}_e(t,x) \big) - \bar{f}_{e,x}( \bar{v}_e(t) , \bar{v}_i(t) , \mu_t)  \bigg\rbrace   \bigg) \\
 - \partial_{y_i}  \bigg( p_{t,x}(y_e,y_i) \bigg\lbrace f_i\big(y_i + \bar{v}_i(t,x) \big) - \bar{f}_{i,x}( \bar{v}_e(t) , \bar{v}_i(t) , \mu_t)  \bigg\rbrace   \bigg) \\
 + \frac{1}{2}\partial_{y_e}^2\bigg( \sigma^2_e\big(x,y_e + \bar{v}_e(t,x) \big)  p_{t,x}(y_e,y_i) \bigg) +   \frac{1}{2}\partial_{y_i}^2\bigg( \sigma^2_i\big(x,y_i + \bar{v}_i(t,x) \big)  p_{t,x}(y_e,y_i) \bigg), \label{eq: Fokker Planck PDE}
 \end{multline}
 and $p_{0,x}$ is the same as in Hypothesis \ref{Hypothesis Initial Convergence}.
 \end{corollary}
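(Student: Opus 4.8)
The plan is to derive both assertions from Lemma~\ref{Lemma Existence of Hydro Limit}. The Fokker--Planck equation \eqref{eq: Fokker Planck PDE} is the easier part. By Lemma~\ref{Lemma Existence of Hydro Limit}, conditionally on the position $x\sim\kappa$ the pair $(y_{e,x}(t),y_{i,x}(t))$ is a time-inhomogeneous diffusion driven by independent Brownian motions; a glance at \eqref{eq: limiting y e x t}--\eqref{eq: limiting y i x t} shows that its drift in each coordinate depends only on that coordinate, that its diffusion coefficients are $\sigma_e^2(x,\cdot+\bar v_e(x,t))$ and $\sigma_i^2(x,\cdot+\bar v_i(x,t))$, and that the mean-field correction appearing there is exactly $-\bar f_{e,x}(\bar v_e(t),\bar v_i(t),\mu_t)$, resp.\ $-\bar f_{i,x}(\bar v_e(t),\bar v_i(t),\mu_t)$. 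The Kolmogorov forward equation for this diffusion is precisely \eqref{eq: Fokker Planck PDE}, with initial datum the density $p_{0,x}$ of $\mu_{0,x}$ from Hypothesis~\ref{Hypothesis Initial Convergence}; existence of a classical $\mathcal C^2$ solution follows from standard parabolic regularity, using the smoothness and boundedness of the $\sigma_\alpha$ and of the derivatives of the $f_\alpha$, together with continuity of $x\mapsto\mu_x$.

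For the ODE, the starting point is that $(\bar v_e(t),\bar v_i(t),\mu_t)\in\mathcal B$ for all $t<\eta$, so $\mathcal G(\bar v(t),\mu_t)=0$ identically and every eigenvalue of $\mathcal J(\bar v(t),\mu_t)$ has strictly negative real part; in particular $\mathcal J$ is invertible, with inverse $\mathcal L$. First I would establish that $t\mapsto\bar v(t)$ is $\mathcal C^1$: $\mathcal G$ is $\mathcal C^1$ in $v$ with invertible $v$-derivative $\mathcal J$, so the implicit function theorem represents $\bar v(t)$ locally as a $\mathcal C^1$ function of $\mu_t$, and $t\mapsto\mu_t$ — tested against the bounded $\mathcal C^1$ functions entering $\mathcal G$ — is $\mathcal C^1$ by the forward equation just obtained. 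Differentiating the identity $\mathcal G^a_\alpha(\bar v(t),\mu_t)=0$ in $t$, the chain rule splits the derivative into a part where only $\bar v(t)$ moves and a part where only $\mu_t$ moves:
\begin{equation}
\sum_{b=1}^M\sum_{\beta\in\{e,i\}}\mathcal J^{ab}_{\alpha\beta}(\bar v(t),\mu_t)\,\frac{d\bar v^b_\beta}{dt}\;=\;-\,\mathcal H^a_\alpha(\bar v_e(t),\bar v_i(t),\mu_t),\qquad \mathcal H^a_\alpha:=\frac{d}{dt}\Big|_{v\ \mathrm{fixed}}\mathcal G^a_\alpha(v,\mu_t).
\end{equation}
Applying $\mathcal L=\mathcal J^{-1}$ gives the asserted formula for $d\bar v^a_\alpha/dt$, and it remains to identify $\mathcal H^a_\alpha$ with the right-hand side of \eqref{eq: autonomous v expression}.

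To compute $\mathcal H^a_\alpha$, I would use the finite-rank form $\mathcal K_{\alpha\beta}(x,x')=\sum_{i,j}c_{\alpha\beta,ij}h_i(x)h_j(x')$ and the $\kappa$-orthonormality of the $h_i$ to collapse the outer $z$-integral in $\mathcal G^a_\alpha$, reducing it to a sum over $b\le M$ of expectations of $c_{\alpha e,ab}h_b(x)G_{\alpha e}(y_e+v_e(x))$ and $-c_{\alpha i,ab}h_b(x)G_{\alpha i}(y_i+v_i(x))$ under $\mu$. Since $\mu_t$ is the law of $(x,y_{e,x}(t),y_{i,x}(t))$ with $x\sim\kappa$, I would then apply Itô's formula to $G_{\alpha e}(u_{e,x}(t)+v_e(x))$ along \eqref{eq: limiting y e x t} with $v$ held fixed — the stochastic integral is a true martingale because $\dot G_{\alpha e}$ and $\sigma_e$ are bounded — and take expectations to get
\begin{equation}
\frac{d}{dt}\,\mathbb E^{u\sim\mu_x}\!\big[G_{\alpha e}(u_{e,t}+v_e(x))\big]=\mathbb E^{u\sim\mu_x}\!\Big[\dot G_{\alpha e}(u_{e,t}+v_e(x))\big\{f_e(u_{e,t}+\bar v_e(x,t))-\bar f_{e,x}\big\}+\tfrac12\ddot G_{\alpha e}(u_{e,t}+v_e(x))\,\sigma_e^2\big(x,u_{e,t}+\bar v_e(x,t)\big)\Big].
\end{equation}
Setting $v=\bar v(t)$ after differentiating, summing over $b$, treating the inhibitory term identically with the opposite sign, integrating against $h_b(x)\,d\kappa(x)$, and rewriting the result as an expectation under $\mu_t$ yields exactly \eqref{eq: autonomous v expression}. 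The interchanges of $d/dt$ with $\int\!d\kappa(x)$ and with $\mathbb E^{\mu_x}$ are justified by uniform-in-$x$ moment bounds on $(u_{e,t},u_{i,t})$ (from the boundedness of the coefficients and Hypothesis~\ref{Hypothesis Initial Convergence}) and the boundedness of $\dot G_{\alpha\beta},\ddot G_{\alpha\beta},\sigma_\alpha^2$ and the derivatives of $f_\alpha$.

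The step I expect to be the main obstacle is the regularity bootstrap for $\bar v(\cdot)$: invoking the implicit function theorem and the chain rule requires $t\mapsto\mu_t$ to be differentiable in the relevant weak sense, yet $\mu_t$ evolves through drifts that themselves contain $\bar v(t)$, so one must propagate joint $\mathcal C^1$-regularity of the pair $(\bar v(t),\mu_t)$ on $[0,\eta)$ — most cleanly by extracting it from the construction underlying Lemma~\ref{Lemma Existence of Hydro Limit}, or via a short continuation argument. A secondary technical point is classical well-posedness and $\mathcal C^2$-regularity of \eqref{eq: Fokker Planck PDE} if the $\sigma_\alpha^2$ are not bounded below, which would need a hypoellipticity or mollification argument; once these regularity facts are in hand, the rest of the corollary is a direct consequence of Itô's formula and the invertibility of $\mathcal J$.
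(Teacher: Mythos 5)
Your proposal is correct and follows essentially the same route as the paper: differentiate the balance identity $\mathcal{G}^a_\alpha(\bar v_e(t),\bar v_i(t),\mu_t)=0$ in time, identify the fixed-$v$ part of the derivative with $\mathcal{H}^a_\alpha$ via It\^o's formula applied to $G_{\alpha\beta}$ along the limiting SDEs (using the finite-rank kernel and orthonormality of the $h_a$), and invert $\mathcal{J}$, while the Fokker--Planck equation is the standard forward equation for \eqref{eq: limiting y e x t}--\eqref{eq: limiting y i x t}. Your additional attention to the $\mathcal{C}^1$-regularity of $t\mapsto(\bar v(t),\mu_t)$ and to the sign convention for $\mathcal{H}$ (your version is the consistent one; the paper's displayed proof has a harmless sign slip) only fills in details the paper leaves implicit, since its construction of $(\bar v,\mu)$ in Lemma~\ref{Lemma Existence of Hydro Limit} already supplies that regularity.
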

\begin{proof}
   \eqref{eq: Fokker Planck PDE} is a known corollary of \eqref{eq: limiting y e x t} and \eqref{eq: limiting y i x t}. \eqref{eq: autonomous v expression} follows from applying Ito's Formula to the $mM$ equations that are necessarily satisfied for a measure in the Balanced Manifold. In more detail, the balanced requirement necessitates that for $1\leq a \leq M$ and $\alpha \in \lbrace e,i\rbrace$,
   \begin{align}
    \mathcal{G}^a_\alpha\big(\bar{v}_e(t) ,\bar{v}_i(t), \mu_t\big) = 0.
   \end{align}
Implicitly differentiating, we hence find that
   \begin{align}
 \sum_{\beta \in \lbrace e,i\rbrace} \sum_{b=1}^M \mathcal{J}^{ab}_{\alpha\beta}(\bar{v}_e(t), \bar{v}_i(t) , \mu_t) \frac{d\bar{v}^b_{\beta}}{dt}  =& -\lim_{h \to 0^+} h^{-1} \big( \mathcal{G}^a_\alpha\big(\bar{v}_e(t) ,\bar{v}_i(t) , \mu_{t+h}\big) - \mathcal{G}^a_\alpha\big(\bar{v}_e(t) ,\bar{v}_i(t)  , \mu_t) \big)\nonumber \\
 =& \mathcal{H}_{\alpha}^a( \bar{v}_e(t), \bar{v}_i(t) , \mu_t),
   \end{align}
   thanks to Ito's Lemma. We can invert $\mathcal{J}$ for all times less than $\eta$, and this yields the Corollary.
\end{proof}

\subsection{Balanced Neural Fields}
In this section we make some additional assumptions that ensure that the limiting dynamics is Gaussian (and therefore of low rank). This facilitates a `neural-field' type equation (i.e. a spatially-distributed autonomous equation for the macroscopic activity of the system). These equations would be an ideal base from which one can search for spatiotemporal patterns in balanced networks. There already exist various works that have explored pattern formation in balanced neural networks, including \cite{Litwin-Kumar2012,Rosenbaum2014,Rosenbaum2017,Ebsch2020}. Neural fields are a popular means of studying the activity in large ensembles of neurons, see the reviews in \cite{ermentrout1998neural,hutt2003pattern,Bressloff2012,Coombes2014,Cook2022}, and papers that derive them from particle limits, including \cite{Chevallier2017,Avitabile2024}. In \cite{Avitabile2024} the neural field equation was such that the solution is the local mean of a Gaussian spatially-distributed Mckean-Vlasov Fokker-Planck equation. We make analogous assumptions in this section; and this will also ensure a Gaussian limit. The Gaussian assumption ensures that the Fokker-Planck PDE for the population density \eqref{eq: Fokker Planck PDE} can be reduced to an ODE for the covariances.

In more detail, our additional assumption throughout this section is that there exists $\Sigma_e,\Sigma_i \in \mathcal{C}(\mathcal{E})$ such that
\begin{align}
f_e(z) =& - \frac{z}{\tau_e} \\
f_i(z) =& - \frac{z}{\tau_i} \\
\sigma_e(x,z) =& \Sigma_e(x) \\
\sigma_i(x,z) =& \Sigma_i(x).
\end{align}
 The Gaussian nature of the limiting probability law enables a simpler representation of the limiting dynamics. Let $\rho(v,y)$ be the Gaussian density of mean $0$, variance $v > 0$ at $y\in \mathbb{R}$, i.e.
 \begin{align}
 \rho(v,y) = \big( 2\pi v \big)^{-1/2} \exp\bigg( - \frac{y^2}{2v} \bigg).
 \end{align}
 Let $K_{e,x}(t)$ and $K_{i,x}(t)$ be the local covariances, i.e. such that
 \begin{align}
 K_{\alpha,x}(t) = \mathbb{E}^{\mu_t}\big[ y_{\alpha,x}(t)^2 \big].
 \end{align}
 and define
   \begin{align}
   \tilde{\mathcal{G}}_{\alpha}^a:& \mathcal{C}_M(\mathcal{E})^2 \times \mathcal{C}\big( \mathcal{E} , \mathbb{R}^+ \big)^2 \mapsto \mathbb{R} 
   \end{align}
 to be such that
  \begin{multline}
 \tilde{\mathcal{G}}_{\alpha}^a\big(v_e(t), v_i(t) ,  K_e(t) , K_i(t) \big) = \int_{\mathcal{E}}\int_{\mathbb{R}} h_a(z)  \bigg\lbrace  \mathcal{K}_{\alpha e}(z,x) \rho\big( K_e(x) , y \big) G_{\alpha e}(y + v_e(x) )  \\-  \rho\big( K_i(x) , y \big) \mathcal{K}_{\alpha i}(z,x)  G_{\alpha i}(y + v_i(x) )  \bigg\rbrace dy  d\kappa(x) d\kappa(z) ,
 \end{multline} 
 and we recall that for constants $\big\lbrace c_{\alpha\beta, ab} \big\rbrace $, it holds that
 for all $x,y \in \mathcal{E}$,
 \begin{align}
\mathcal{K}_{\alpha\beta}(x,y) = \sum_{a,b=1}^M c_{\alpha\beta,ab} h_a(x) h_b(y).
 \end{align}
 \begin{lemma} \label{Lemma Gaussian Neural Field}
The covariances are such that
 \begin{align}
\frac{d}{dt} K_{e,x}(t) &= - 2\tau_{e}^{-1} K_{e,x}(t) + \Sigma_{e}(x)^2 \label{eq: cov neural field 1} \\
\frac{d}{dt} K_{i,x}(t) &= - 2\tau_{i}^{-1} K_{i,x}(t) + \Sigma_{i}(x)^2 .\label{eq: cov neural field 2}
 \end{align}
 For $1\leq a \leq M$ and $\alpha \in \lbrace e,i \rbrace$, and all $t< \eta$ it must hold that
 \begin{align}
  \tilde{\mathcal{G}}_{\alpha}^a\big(v_e(t), v_i(t), K_e(t) , K_i(t) \big) = 0.
 \end{align}
 \end{lemma}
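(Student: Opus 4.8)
The plan is to observe that, under the linear-drift / additive-noise assumptions of this subsection, the limiting SDEs \eqref{eq: limiting y e x t}--\eqref{eq: limiting y i x t} decouple into a family of independent Ornstein--Uhlenbeck processes, after which both claims follow from elementary computations together with the identity $\mathcal{G}(\bar{v}_e(t),\bar{v}_i(t),\mu_t)=0$ that holds because $(\bar{v}_e(t),\bar{v}_i(t),\mu_t)\in\mathcal{B}$ for $t<\eta$ by Lemma~\ref{Lemma Existence of Hydro Limit}. First I would substitute $f_e(z)=-z/\tau_e$ and $\sigma_e(x,z)=\Sigma_e(x)$ into \eqref{eq: limiting y e x t}: the nonlocal correction term becomes $-\tau_e^{-1}\sum_{a=1}^{M}h_a(x)\int_{\mathcal{E}}h_a(x')\big(\mathbb{E}^{u\sim\mu_{x'}}[u_{e,t}]+\bar{v}_e(x',t)\big)\,d\kappa(x')$. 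Since $(\bar{v}_e(t),\bar{v}_i(t),\mu_t)\in\mathcal{B}$ for $t<\eta$, its third component $\mu_t$ lies in $\mathcal{P}_*(\mathcal{E}\times\mathbb{R}^2)$, so $\int_{\mathcal{E}}h_a(x')\mathbb{E}^{u\sim\mu_{x'}}[u_{e,t}]\,d\kappa(x')=\mathbb{E}^{\mu_t}[h_a(x)y_e]=0$; and since $\bar{v}_e(t)\in\mathcal{C}_M(\mathcal{E})$ and $\int_{\mathcal{E}}h_ah_b\,d\kappa=\delta_{ab}$, the remaining part equals $-\tau_e^{-1}\bar{v}_e(x,t)$, which cancels exactly the $\bar{v}_e(x,t)$ inside $f_e\big(y_{e,x}(t)+\bar{v}_e(x,t)\big)$. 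Hence $dy_{e,x}(t)=-\tau_e^{-1}y_{e,x}(t)\,dt+\Sigma_e(x)\,dW_{e,t}$, and symmetrically for the inhibitory component; the two equations are independent Ornstein--Uhlenbeck processes, with no residual dependence on $\bar{v}$ and only parametric dependence on $x$ through $\Sigma_\alpha(x)$ and the initial law $\mu_{0,x}$.

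Given this reduction, the covariance equations are immediate: applying Itô's formula to $y_{e,x}(t)^2$ gives $d\big(y_{e,x}(t)^2\big)=\big(-2\tau_e^{-1}y_{e,x}(t)^2+\Sigma_e(x)^2\big)\,dt+2\Sigma_e(x)y_{e,x}(t)\,dW_{e,t}$, and since $\Sigma_e$ is bounded and $\mathbb{E}^{\mu_{0,x}}[y_e^2]<\infty$ for $\kappa$-a.e.\ $x$ (by disintegration of $\mathbb{E}^{\mu_0}[y_e^2]<\infty$ in Hypothesis~\ref{Hypothesis Initial Convergence}), the process has second moment bounded uniformly on $[0,T]$, so the stochastic integral is a true martingale; taking expectations yields \eqref{eq: cov neural field 1} with $K_{e,x}(0)=\mathbb{E}^{\mu_{0,x}}[y_e^2]$, and \eqref{eq: cov neural field 2} is identical. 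This step uses no Gaussianity — it goes through for any initial law with a finite second moment.

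For the vanishing of $\tilde{\mathcal{G}}$ I would use the Gaussian nature of the limiting law asserted above (equivalently, that $\mu_{0,x}$ is a centred product Gaussian): since each $y_{\alpha,x}(\cdot)$ is an Ornstein--Uhlenbeck process driven by $W_\alpha$, with these two drivers independent, $\mu_{t,x}$ has density $\rho\big(K_{e,x}(t),y_e\big)\,\rho\big(K_{i,x}(t),y_i\big)$ for all $t$. Substituting this into $\mathcal{G}^a_\alpha(\bar{v}_e(t),\bar{v}_i(t),\mu_t)$ and integrating out the spectator variable in each of its two terms (using $\int_{\mathbb{R}}\rho(v,y)\,dy=1$) shows $\mathcal{G}^a_\alpha(\bar{v}_e(t),\bar{v}_i(t),\mu_t)=\tilde{\mathcal{G}}^a_\alpha(\bar{v}_e(t),\bar{v}_i(t),K_e(t),K_i(t))$; the left side is $0$ because $(\bar{v}_e(t),\bar{v}_i(t),\mu_t)\in\mathcal{B}$ for $t<\eta$, giving the second claim. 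I do not expect a genuine obstacle here once Lemma~\ref{Lemma Existence of Hydro Limit} is in hand; the step needing the most care is the cancellation in the first paragraph — verifying that $\mu_t$ really lies in $\mathcal{P}_*$ so the first-moment term drops, and that the $\mathcal{C}_M$-structure of $\bar{v}_\alpha(t)$ makes the $\bar{v}$-dependent part of the nonlocal term coincide with the local drift — while the centred-Gaussian hypothesis of this subsection is invoked only to propagate Gaussianity in the last paragraph.
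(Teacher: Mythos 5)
Your proposal is correct and follows essentially the same route as the paper: reduce the limiting dynamics in this linear-drift, additive-noise setting to centred Ornstein--Uhlenbeck processes (the paper simply cites the standard Gaussian-SDE covariance formula for \eqref{eq: cov neural field 1}--\eqref{eq: cov neural field 2}), and read off $\tilde{\mathcal{G}}^a_{\alpha}=0$ from the balanced-manifold identity $\mathcal{G}^a_{\alpha}\big(\bar{v}_e(t),\bar{v}_i(t),\mu_t\big)=0$ once the conditional law $\mu_{t,x}$ is the centred Gaussian with variances $K_{e,x}(t),K_{i,x}(t)$. Your explicit cancellation of the nonlocal drift term (via $\mu_t\in\mathcal{P}_*$ and the orthonormality of the $h_a$) and your flagging that Gaussianity of $\mu_{t,x}$ rests on centred Gaussian initial data $\mu_{0,x}$ are details the paper leaves implicit, and you handle them correctly.
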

 \begin{proof}
The covariance evolutions \eqref{eq: cov neural field 1}-\eqref{eq: cov neural field 2} are standard formulae for Gaussian SDEs \cite[Section 5.6]{Karatzas1991}.
 \end{proof}
 By implicitly differentiating, we obtain the following ODE expression for the means, analogous to Corollary \ref{Corollary Autonomous Dynamics}.
 \begin{corollary}
For all $t< \eta$, the means evolve in the following manner,
 \begin{align}
 \frac{dv^a_{\alpha}}{dt} =& -\sum_{b=1}^M  \bigg( \mathcal{L}^{ab}_{\alpha e}(v(t),K_e(t), K_i(t))  \mathcal{H}_{e}^{b}(v(t), K_e(t)  ) -  \mathcal{L}^{ab}_{\alpha i}(v(t),K_e(t), K_i(t))  \mathcal{H}_{i}^{b}(v(t), K_i(t)  ) \bigg),
 \end{align}
 and the specific form of $\mathcal{L}^{ab}_{\alpha \beta}$ and $\mathcal{H}_{\alpha}^{b}$ is outlined below. 
 \end{corollary}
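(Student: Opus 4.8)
\emph{Proof plan.}

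The plan is to follow the template of Corollary~\ref{Corollary Autonomous Dynamics}: by Lemma~\ref{Lemma Gaussian Neural Field} the balanced constraint holds identically in time in the Gaussian regime, so differentiating it in $t$ and inverting a Jacobian produces an autonomous ODE for the means. Concretely, Lemma~\ref{Lemma Gaussian Neural Field} gives, for every $1\le a\le M$, $\alpha\in\{e,i\}$ and every $t<\eta$, the identity $\tilde{\mathcal{G}}_\alpha^a\big(v_e(t),v_i(t),K_e(t),K_i(t)\big)=0$. First I would record that $\tilde{\mathcal{G}}_\alpha^a$ is continuously differentiable in all its arguments: this is immediate from the finite-rank form of $\mathcal{K}_{\alpha\beta}$, the uniform bounds on $G_{\alpha\beta},\dot G_{\alpha\beta},\ddot G_{\alpha\beta}$, and differentiation under the integral sign in $y$, in $v$, and in $K$ (the $K$-derivative of $y\mapsto\rho(K,y)$ has Gaussian decay, so dominated convergence applies). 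Next I would check that the $2M\times 2M$ matrix of partial $v$-derivatives of $\tilde{\mathcal{G}}$ is exactly $\mathcal{J}^{ab}_{\alpha\beta}$ of \eqref{eq: Jacobian e}--\eqref{eq: Jacobian i} evaluated at the Gaussian measure $\mu_t$ whose conditional law given $x$ is $\mathcal{N}\big(0,\mathrm{diag}(K_{e,x}(t),K_{i,x}(t))\big)$; call this matrix $\tilde{\mathcal{J}}(v(t),K_e(t),K_i(t))$. Since $(\bar v_e(t),\bar v_i(t),\mu_t)\in\mathcal{B}$ for $t<\eta$, every eigenvalue of $\tilde{\mathcal{J}}$ has strictly negative real part, hence $\tilde{\mathcal{J}}$ is invertible with inverse $\mathcal{L}=\tilde{\mathcal{J}}^{-1}$; by the implicit function theorem this also shows $t\mapsto v(t)$ is $C^1$, so the composite $t\mapsto\tilde{\mathcal{G}}_\alpha^a(v_e(t),v_i(t),K_e(t),K_i(t))$ is differentiable.

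Differentiating the identity in $t$ and applying the chain rule yields
\[
\sum_{\beta\in\{e,i\}}\sum_{b=1}^M \tilde{\mathcal{J}}^{ab}_{\alpha\beta}\big(v(t),K_e(t),K_i(t)\big)\,\frac{dv^b_\beta}{dt}
= -\,\Big[\partial_t\,\tilde{\mathcal{G}}_\alpha^a\Big]_{v\text{ held fixed}},
\]
where the right side differentiates only through $K_e(t),K_i(t)$. To evaluate it I would use the two facts special to this section: the covariances obey the linear ODEs \eqref{eq: cov neural field 1}--\eqref{eq: cov neural field 2}, $\dot K_{\alpha,x}(t)=-2\tau_\alpha^{-1}K_{\alpha,x}(t)+\Sigma_\alpha(x)^2$, and the Gaussian kernel satisfies the heat relation $\partial_K\rho(K,y)=\tfrac12\partial_y^2\rho(K,y)$. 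Substituting the latter and integrating by parts twice in $y$ transfers two derivatives onto $G_{\alpha\beta}$, turning $\partial_K\rho\cdot G_{\alpha\beta}$ into $\tfrac12\rho\cdot\ddot G_{\alpha\beta}$; feeding in the covariance ODEs then splits this into a term proportional to $\Sigma_\alpha(x)^2\ddot G_{\alpha\beta}$ and a term proportional to $\tau_\alpha^{-1}K_{\alpha,x}\ddot G_{\alpha\beta}$, the latter of which equals $\tau_\alpha^{-1}\mathbb{E}[y_\alpha\dot G_{\alpha\beta}]$ by the Gaussian integration-by-parts (Stein) identity and therefore matches the first-order term of the general formula \eqref{eq: autonomous v expression} once one checks that $\bar f_{\alpha,x}-f_\alpha(y_\alpha+v_\alpha(x))=y_\alpha/\tau_\alpha$ (which holds because $\mathbb{E}^{\mu_t}[h_b(x)y_\alpha]=0$ and $f_\alpha$ is linear). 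Collecting these contributions defines $\mathcal{H}^b_e(v(t),K_e(t))$ and $\mathcal{H}^b_i(v(t),K_i(t))$, and multiplying through by $\mathcal{L}=\tilde{\mathcal{J}}^{-1}$ gives
\[
\frac{dv^a_\alpha}{dt}=-\sum_{b=1}^M\Big(\mathcal{L}^{ab}_{\alpha e}\big(v(t),K_e(t),K_i(t)\big)\mathcal{H}^b_e\big(v(t),K_e(t)\big)-\mathcal{L}^{ab}_{\alpha i}\big(v(t),K_e(t),K_i(t)\big)\mathcal{H}^b_i\big(v(t),K_i(t)\big)\Big),
\]
which is the assertion.

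I expect the main obstacle to be bookkeeping rather than conceptual: carrying out the double Gaussian integration by parts so that $\big[\partial_t\tilde{\mathcal{G}}_\alpha^a\big]_{v\text{ fixed}}$ emerges in exactly the claimed split form, with the correct signs between the $e$- and $i$-contributions and the $\tfrac12$ on the $\ddot G_{\alpha\beta}\Sigma_\beta^2$ terms, and confirming consistency with Corollary~\ref{Corollary Autonomous Dynamics} specialised to linear $f_\alpha$ and additive noise via the Stein identity. A secondary point needing care is the logical order of the regularity argument: one must know $\tilde{\mathcal{G}}\in C^1$ and that $\tilde{\mathcal{J}}$ is invertible on $\mathcal{B}$ before invoking the implicit function theorem to obtain $v\in C^1$, and only then differentiate the constraint and invert.
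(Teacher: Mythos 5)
Your proposal is correct and takes essentially the same route as the paper: implicit differentiation in time of the Gaussian balance constraint $\tilde{\mathcal{G}}^a_\alpha\big(v(t),K_e(t),K_i(t)\big)=0$, with the $v$-part of the chain rule giving the Jacobian $\mathcal{J}$ (invertible since $(\bar v_e(t),\bar v_i(t),\mu_t)\in\mathcal{B}$ for $t<\eta$), the $K$-part giving $\mathcal{H}$ through the covariance ODEs \eqref{eq: cov neural field 1}--\eqref{eq: cov neural field 2}, and multiplication by $\mathcal{L}=\mathcal{J}^{-1}$. The only difference is presentational: the paper reads off $\mathcal{H}^a_\alpha$ directly from $\partial_K\rho(K,y)=\big(-\tfrac{1}{2K}+\tfrac{y^2}{2K^2}\big)\rho(K,y)$ (which is exactly how its Gaussian-section $\mathcal{H}$'s are defined), whereas your extra detour through the heat identity, double integration by parts and the Stein identity is the consistency check with the general formula \eqref{eq: autonomous v expression} that the paper handles, more tersely, in the proof of Lemma \ref{Lemma Gaussian Neural Field}.
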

  Define
  \begin{align}
   \mathcal{G}_{\alpha}^a:& \mathcal{C}_M(\mathcal{E})^2 \times \mathcal{C}\big( \mathcal{E} , \mathbb{R}^+ \big)^2 \mapsto \mathbb{R} \\
 \mathcal{G}_{\alpha}^a(v, K_e, K_i ) =&   \int_{\mathcal{E}} \int_{\mathcal{E}}  h_a(x')    \bigg\lbrace \mathcal{K}_{\alpha e}(x',x)  \int_{\mathbb{R}} \rho\big(  K_{e,t}(x) , y_e \big) G_{\alpha e}(y_{e} + v_e(x) ) dy_e \nonumber \\ &-  \mathcal{K}_{\alpha i}(x',x)   \int_{\mathbb{R}} \rho\big(   K_{i,t}(x) , y_i \big) G_{\alpha i}(y_{i} + v_i(x) ) dy_i \bigg\rbrace d\kappa(x)  d\kappa(x') .
 \end{align}
 Analogously to the definitions in \eqref{eq: Jacobian e} and \eqref{eq: Jacobian i}, for $1\leq a,b \leq M$ and $\alpha\in \lbrace e,i \rbrace$, define   
   \begin{align}
   \mathcal{J}_{\alpha e}^{ab}:& \mathcal{C}_M(\mathcal{E})^2 \times \mathcal{C}\big( \mathcal{E} , \mathbb{R}^+ \big)^2 \mapsto \mathbb{R} \\
 \mathcal{J}_{\alpha e}^{ab}(v, K_e, K_i ) =&   \int_{\mathcal{E}} \int_{\mathcal{E}}  h_a(x')     \mathcal{K}_{\alpha e}(x',x)  \int_{\mathbb{R}} \rho\big(   K_{e}(x) , y_e \big)h_b(x) \dot{G}_{\alpha e}(y_{e} + v_e(x) ) dy_e d\kappa(x)  d\kappa(x') \\
 \mathcal{J}_{\alpha i}^{ab}(v, K_e, K_i ) =&  -  \int_{\mathcal{E}} \int_{\mathcal{E}}  h_a(x')    \mathcal{K}_{\alpha i}(x',x)   \int_{\mathbb{R}} \rho\big(  K_{i}(x) , y_i \big) h_b(x) \dot{G}_{\alpha i}(y_{i} + v_i(x) ) dy_i  d\kappa(x)  d\kappa(x') ,
 \end{align}
 and let $\mathcal{J}:  \mathcal{C}_M(\mathcal{E})^2 \times \mathcal{C}\big( \mathcal{E} , \mathbb{R}^+ \big)^2 \mapsto \mathbb{R}^{2M\times 2M}$ be the matrix with elements $\big( \mathcal{J}_{\alpha e}^{ab} \big)_{a,b \leq M \fatsemi \alpha,\beta \in \lbrace e,i \rbrace}$. Write $\mathcal{L}:   \mathcal{C}_M(\mathcal{E})^2 \times \mathcal{C}\big( \mathcal{E} , \mathbb{R}^+ \big)^2 \mapsto \mathbb{R}^{2M\times 2M}$ be such that $\mathcal{L}(v,K_e,K_i) := \mathcal{J}(v,K_e,K_i)^{-1}$. Finally, analogously to \eqref{eq: autonomous v expression}, define
     \begin{align}
   \mathcal{H}_{\alpha }^{a}:& \mathcal{C}_M(\mathcal{E})^2 \times \mathcal{C}\big( \mathcal{E} , \mathbb{R}^+ \big)^2 \mapsto \mathbb{R} \\
 \mathcal{H}_{e}^{a}(v, K_e  ) =&   \int_{\mathcal{E}} \int_{\mathcal{E}}  h_a(x')     \mathcal{K}_{\alpha e}(x',x)  \int_{\mathbb{R}}\bigg( -\frac{1}{2 K_e(x)} + \frac{y_e^2}{2K_e(x)^2} \bigg) \nonumber \\ &\times \dot{K}_e(x)  \rho\big(   K_{e}(x) , y_e \big) G_{\alpha e}(y_{e} + v_e(x) ) dy_e d\kappa(x)  d\kappa(x') \text{ where } \\
\dot{K}_e(x) =& - 2\tau_{e}^{-1} K_{e}(x) + \Sigma_{e}(x)^2 \\
 \mathcal{H}_{i}^{a}(v, K_i  ) =&   \int_{\mathcal{E}} \int_{\mathcal{E}}  h_a(x')     \mathcal{K}_{\alpha i}(x',x)  \int_{\mathbb{R}}\bigg( -\frac{1}{2 K_i(x)} + \frac{y_i^2}{2K_i(x)^2} \bigg)\nonumber \\ &\times \dot{K}_i(x)\rho\big(   K_{i}(x) , y_i \big) G_{\alpha i}(y_{i} + v_i(x) ) dy_i d\kappa(x)  d\kappa(x') \\
 \dot{K}_i(x) =& - 2\tau_{i}^{-1} K_{i}(x) + \Sigma_{i}(x)^2.
 \end{align}
  We now outline the proof of Lemma \ref{Lemma Gaussian Neural Field}.
\begin{proof}
We must show that the dynamics is consistent with that in Corollary \ref{Corollary Autonomous Dynamics}, i.e. we must show that 
\begin{align}
    \mathcal{H}^a_{\alpha}(v,K_{\alpha,t}) = \mathcal{H}^a_{\alpha}(v,\mu_t) .\label{eq: to show H equivalence}
\end{align}
It is immediate from the definition of the Gaussian density that
\begin{align}
\frac{\partial}{\partial t} \rho\big( K_{\alpha,t}(x) , y \big) =  \bigg( - \frac{1}{2 K_{\alpha}(x)} + \frac{y_{\alpha}^2}{2 K_{\alpha}(x)^2} \bigg)\rho\big( K_{\alpha,t}(x) , y \big)\frac{\partial}{\partial t} K_{\alpha,t}(x),
\end{align}
and \eqref{eq: to show H equivalence} readily follows.
 \end{proof}
 
 \section{Numerical Investigation}
 
For our numerical investigations, we start with the mean-field scenario, with no spatial extension. In this case, $\mathcal{E} = 0$,
$M=1$ and $h_1(0) = 1$. We can also take $\sigma_e$ and $\sigma_i$ to both be equal to $1$, and $f_e(z) = - z / \tau_e$ and $f_i(z) = -z / \tau_i$. We thus find that the $n$-dimensional particle model solves the system of SDEs
\begin{align}
dz^j_{e,t}  =& \bigg\lbrace  - z^j_{e,t} / \tau_e + n^{-1/2} \sum_{k\in I_n}\big( G_{ee}(z^k_{e,t}) - G_{ei}(z^k_{i,t}) \big) \bigg\rbrace dt  +  \sigma_e dW^j_{e,t} \\
dz^j_{i,t} =& \bigg\lbrace -z^j_{i,t} / \tau_i + n^{-1/2} \sum_{k\in I_n}\big(  G_{ie}(z^k_{e,t}) - G_{ii}(z^k_{i,t}) \big) \bigg\rbrace dt  + \sigma_i dW^j_{i,t} .
\end{align}

\subsection{First Test: Inhibition-Stabilized Networks}
 
\begin{figure}[htbp]
  \centering
  \includegraphics[width=0.75\textwidth]{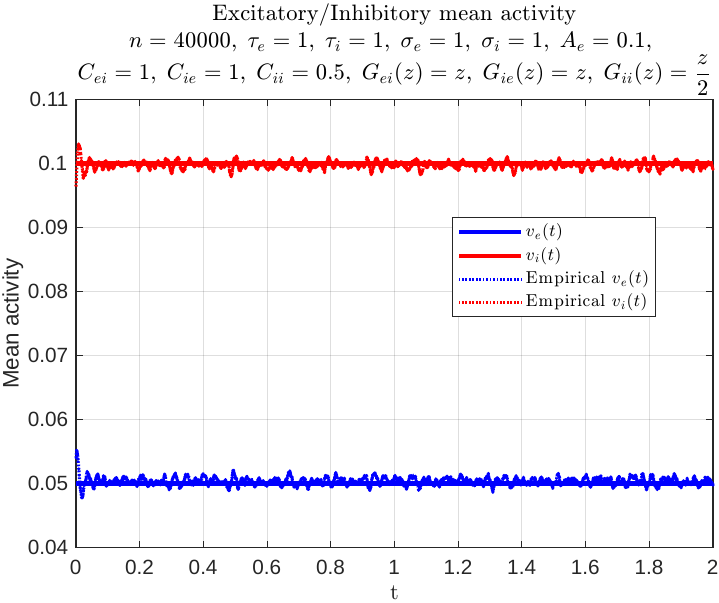}
  \caption{%
    Time evolution of the excitatory (blue) and inhibitory (red) mean population activities.
    Solid lines show the deterministic predictions \(v_e(t)\) and \(v_i(t)\) from \eqref{eq:ODEvs41},
    while the dotted lines plot empirical averages from the stochastic simulation. The parameters are $n=40000$, $\tau_e = 1$, $\tau_i = 1$, $\sigma_e=1$, $\sigma_i=1$, $A_e=1$, $C_{ei} = 1$, $C_{ie} = 1$, $C_{ii} = 0.5$, $G_{ei}(z) = z$, $G_{ie}(z) = z$, $G_{ii}(z) = z/2$.
  }
  \label{fig:mean_activity}
\end{figure}

\begin{figure}[htbp]
  \centering
  \includegraphics[width=0.75\textwidth]{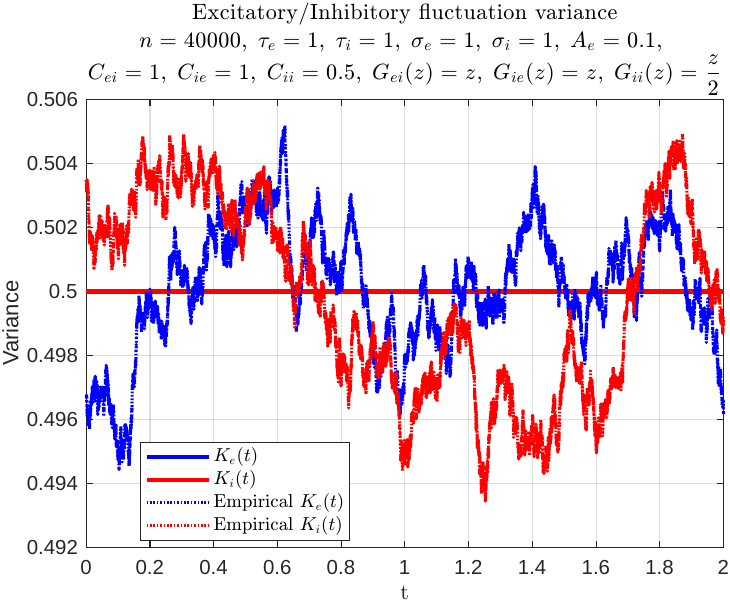}
  \caption{%
    Fluctuation variances of the excitatory (\(K_e(t)\), blue) and inhibitory (\(K_i(t)\), red) populations.
    Solid curves are the analytic solutions of \eqref{eq:ODEKs41} started at steady‐state
    \(\tau\,\sigma^2/2\), and dotted curves show the sample variances from the stochastic simulation. The parameters are the same as in Figure \ref{fig:mean_activity}.%
  }
  \label{fig:variance_activity}
\end{figure}
We start with linear inhibition-stabilized networks (a popular model in neuroscience, see for instance \cite{tsodyks1997paradoxical,ferguson2020mechanisms}). In this case, excitation is \textit{constant}, so we take
\begin{align}
G_{ee}(z) =& A_{e} > 0 \\
G_{ie}(z_e) =& C_{ie}z_e \\
G_{ii}(z_i) =& C_{ii}z_i \\
G_{ei}(z_i) =& C_{ei}z_i ,
\end{align}
and every constant is positive. One obtains that, in the limit $\mu_t$ is centered and Gaussian, with variances $(K_e(t), K_i(t))$ satisfying the ODE
\begin{align}
\label{eq:ODEKs41}
\frac{d}{dt} K_e(t) &= -\frac{2}{\tau_e} K_e(t) + \sigma_e^2 \\
\frac{d}{dt} K_i(t) &= -\frac{2}{\tau_i} K_i(t) + \sigma_i^2.
\end{align}
The limiting means are such that
\begin{align}
\label{eq:ODEvs41}
v_i(t) =& A / C_{ei} \\
v_e(t) =& \frac{C_{ii}}{C_{ie}} v_i(t)
\end{align}
A comparison of the mean and variance in the deterministic and stochastic systems is plotted in Figures \ref{fig:mean_activity} and \ref{fig:variance_activity}. %{\color{blue} Pedro, please insert two figures here: one showing the closeness of the means, and one showing the closeness of the variances}

\subsection{Second Test: Nonlinear Interactions}

Now lets take some nonlinear interactions, but keep the intrinsic dynamics linear. This ensures a Gaussian limit. We still consider the inhibition-stabilized scenario. As previously, the variances are such that
\begin{align}
\frac{d}{dt} K_e(t) &= -\frac{2}{\tau_e} K_e(t) + \sigma_e^2 \\
\frac{d}{dt} K_i(t) &= -\frac{2}{\tau_i} K_i(t) + \sigma_i^2.
\end{align}
Write 
\begin{align}
G_{ee}(z) =& A_{e} > 0.
\end{align}
We take $G_{ie}, G_{ii}, G_{e,i}$ to be increasing sigmoidal functions (i.e. bounded and smooth): in the numerics below they are of the form
\[
G_{\alpha\beta}(x) = C_{\alpha\beta}\tanh\big( \gamma_{\alpha\beta}(x-\xi_{\alpha\beta}) \big).
\]
We require that
\begin{align}
 C_{ei} > A_e,
\end{align}
which is necessary to ensure that the inhibition is able to control the excitation. 
We write the Gaussian kernel to be
\begin{align}
\rho(m,V,y) = \big(2\pi V \big)^{-1/2} \exp\bigg( - \frac{(y-m)^2}{2V} \bigg).
\end{align}
Notice that
\begin{align}
\partial_m \rho(m,V,y) =& \frac{y-m}{V}\rho(m,V,y) \\
\partial_V \rho(m,V,y) =& \rho(m,V,y) \bigg( -\frac{1}{2V} + \frac{(y-m)^2}{2V^2} \bigg)
\end{align}
We require that for all time,
\begin{align}
 \int_{\mathbb{R}} G_{ei}(y) \rho\big(v_i,K_i,y\big) dy &= A_e \\
  \int_{\mathbb{R}} G_{ii}(y) \rho\big(v_i,K_i,y\big) dy &=  \int_{\mathbb{R}} G_{ie}(y) \rho\big(v_e,K_e,y\big) dy.
\end{align}
Differentiating the above equations with respect to time, we obtain that 
\begin{align}
\label{eq:ODEvs42}
\frac{dv_i}{dt} = - \bigg( \int_{\mathbb{R}} G_{ei}(y) \frac{y-v_i}{K_i(t)} \rho(v_i,K_i(t),y) dy \bigg)^{-1} \bigg(  -\frac{2}{\tau_i} K_i(t) + \sigma_i^2 \bigg) \times \\
\int_{\mathbb{R}}G_{ei}(y)\rho\big(v_i(t),K_i(t),y\big) \bigg( -\frac{1}{2K_i(t)} + \frac{(y-v_i(t))^2}{2K_i(t)^2} \bigg) dy
\end{align}
and $\frac{dv_e}{dt}$ is such that
\begin{multline}
\frac{dv_e}{dt}  \int_{\mathbb{R}} G_{ie}(y) \frac{y-v_e(t)}{K_e(t)} \rho(v_e(t) ,K_e(t),y) dy  - \frac{dv_i}{dt}  \int_{\mathbb{R}} G_{ii}(y) \frac{y-v_i(t)}{K_i(t)} \rho(v_i(t),K_i(t),y) dy + \nonumber \\  \bigg(  -\frac{2}{\tau_e} K_e(t) + \sigma_e^2 \bigg)  \int_{\mathbb{R}} G_{ie}(y)  \bigg( -\frac{1}{2K_e(t)} + \frac{(y-v_e(t))^2}{2K_e(t)^2} \bigg)\rho(v_e(t),K_e(t),y) dy \\
- \bigg(  -\frac{2}{\tau_i} K_i(t) + \sigma_i^2 \bigg)  \int_{\mathbb{R}} G_{ii}(y)  \bigg( -\frac{1}{2K_i(t)} + \frac{(y-v_i(t))^2}{2K_i(t)^2} \bigg)\rho(v_i(t) ,K_{i}(t),y) dy =0.
\end{multline}
%Define
%\begin{align}
%\mathcal{G}_{ei}(m,V) = \int_{\mathbb{R}} G_{ei}(y) \rho(m,V,y) dy.
%\end{align}
%By assumption, the initial conditions must be such that in the large $n$ limit, the system converges to the balanced manifold. 
For the initial conditions, we take $K_e(0)$ and $K_i(0)$ to be arbitrary positive constants. The initial conditions must enforce balanced excitation / inhibition,
and we therefore choose $v_i(0)$ to be such that 
\begin{align}
\int_{\mathbb{R}}G_{ei}(y) \rho\big(v_i(0) , K_i(0), y\big) dy = A_e. \label{eq: v i 0 initial second test}
\end{align}
Once we have solved for $v_i(0)$, we let $v_e(0)$ be such that
\begin{align}
\int_{\mathbb{R}} G_{ii}(y) \rho\big(v_i(0),K_i(0),y\big) dy =  \int_{\mathbb{R}} G_{ie}(y) \rho\big(v_e(0),K_e(0),y\big) dy.\label{eq: v e 0 initial second test}
\end{align}
In practice, we solve \eqref{eq: v i 0 initial second test} and \eqref{eq: v e 0 initial second test} using MATLAB's nonlinear solver.

Finally, we must specify the $n$-dimensional particle system that converges to the above limit. This is obtained by choosing the initial conditions to be such that
\begin{align}
z^j_{e,0} &= v_e(0) + \sqrt{K_e(0)} \tilde{z}^j_{e,0} \\
z^j_{i,0} &= v_i(0) +\sqrt{K_i(0)} \tilde{z}^j_{i,0} ,
\end{align}
where $\lbrace \tilde{z}^j_{e,0} , \tilde{z}^j_{i,0} \rbrace_{j\in I_n}$ are iid $\mathcal{N}(0,1)$. A comparison of the stochastic and deterministic systems is plotted in Figures \ref{fig:mean_activity42} and \ref{fig:variance_activity42}. %We then use MATLAB's fixed point solver to solve for 
%$v^*_i$ to be such that
%\begin{align}
%A_e= \int_{\mathbb{R}} G_{ei}(y) \rho\big(v^*_i,1,y\big) dy .
%\end{align}
%We then choose $v^*_e$ to be such that
%\begin{align}
%\int_{\mathbb{R}} G_{ii}(y) \rho\big(v^*_i,1,y\big) dy =  \int_{\mathbb{R}} G_{ie}(y) \rho\big(v^*_e,1,y\big) dy.
%\end{align}
%Notice that the LHS is known (once we have solved for $v^*_i$). Thus we can use a Fixed Point solver to solve for the $v^*_e$.

%{\color{blue} Pedro please upload figures here}
\begin{figure}[htbp]
  \centering
  \includegraphics[width=0.75\textwidth]{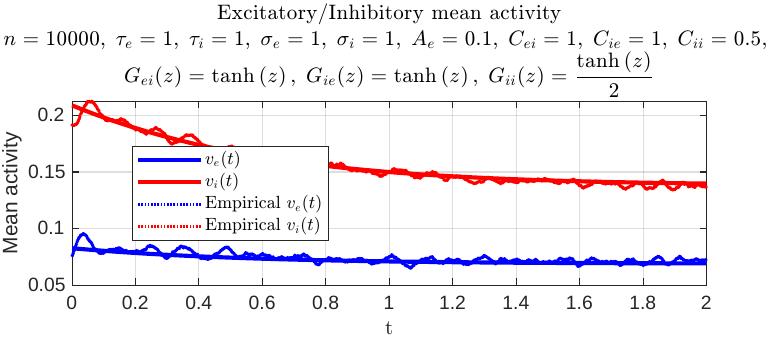}
  \caption{%
    Dynamics of the excitatory (blue) and inhibitory (red) mean activities under the nonlinear gain \(G_{\alpha\beta}(z)=C_{\alpha\beta}\tanh(z)\).
    Solid curves show the deterministic predictions \(v_e(t)\) and \(v_i(t)\) obtained by solving the four‐variable ODEs from \eqref{eq:ODEvs42},
    while dotted curves trace the empirical averages from the stochastic simulation
    (\(n=10\,000\), \(\tau_e=\tau_i=1\), \(\sigma_e=\sigma_i=1\), \(A_e=0.1\), \(C_{ei}=C_{ie}=1\), \(C_{ii}=0.5\)).%
  }
  \label{fig:mean_activity42}
\end{figure}

\begin{figure}[htbp]
  \centering
  \includegraphics[width=0.75\textwidth]{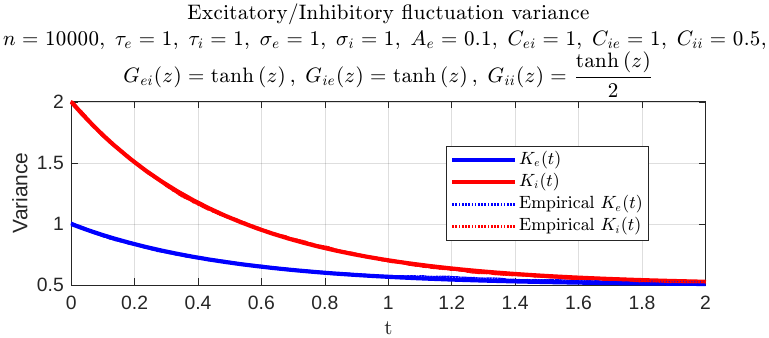}
  \caption{%
    Evolution of the fluctuation variances \(K_e(t)\) (blue) and \(K_i(t)\) (red) under the same nonlinear‐gain setup.
    Solid lines are the analytic variance ODE solutions of \eqref{eq:ODEKs41} with initial conditions \(K_{e}(0)=1\), \(K_{i}(0)=2\),
    and dotted lines are the sample variances from the stochastic simulation. The parameters are the same as Figure \ref{fig:mean_activity42}.%
  }
  \label{fig:variance_activity42}
\end{figure}

% The initial conditions are such that
%\begin{align}
%K_i(0) =& K_e(0) = 1 \\
%v_e(0) =& v^*_e \\
%v_i(0) =& v^*_i.
%\end{align}
 \subsection{Third Test: Spatially-Distributed Connectivity Profile}
% and $h_3(\theta) =  \sin(\theta)$.
For our third test, we take $\mathcal{E} = \mathbb{S}^1 := (-\pi , \pi]$. Take $h_1(\theta) = 1$, $h_2(\theta) = \cos(\theta)$. Write 
\begin{align}
\mathcal{K}_{\alpha\beta}( x, x') = \sum_{i=1}^3 c_{\alpha\beta,ii}  h_i(x) h_i(x'),
\end{align}
where the constants are such that  $c_{\alpha\beta,22} = c_{\alpha\beta,33}$. Neuron $j$ is assigned the position $x^j_n = 2\pi j / n$. We take $G_{\alpha\beta}(z)$ to be sigmoidal, focussing on the hyperbolic tangent
\begin{align}
G_{\alpha\beta}(z) = C_{\alpha\beta} \tanh( \gamma_{\alpha\beta} z ).
\end{align}
%Assume that $C_{ee} < C_{ei}$ and $\gamma_{ee} = \gamma_{ei} = 1$. Also, assume that
%\begin{align}
%C_{ie} &= C_{ii} \\
%\gamma_{ie} &\gg \gamma_{ii}.
%\end{align}
As previously, the limiting variance dynamics is 
\begin{align}
\frac{d}{dt} K_e(t) &= -\frac{2}{\tau_e} K_e(t) + \sigma_e^2 \\
\frac{d}{dt} K_i(t) &= -\frac{2}{\tau_i} K_i(t) + \sigma_i^2.
\end{align}
For convenience we take the initial variances to be their equilibrium values
\begin{align}
K_e(0) &= K_e^* = \frac{\tau_e \sigma_e^2}{2} \\
K_i(0) &= K_i^* =  \frac{\tau_i \sigma_i^2}{2} .
\end{align}
The means are such that
\begin{align}
m_e(t,\theta) =& v^1_e(t) + v^2_e(t) \cos(\theta) \\ %+ v^3_e(t) \sin(\theta) \\
m_i(t,\theta) =& v^1_i(t) + v^2_i(t) \cos(\theta).% + v^3_i(t) \sin(\theta).
\end{align}
\begin{figure}[htbp]
  \centering
  \includegraphics[width=0.45\textwidth]{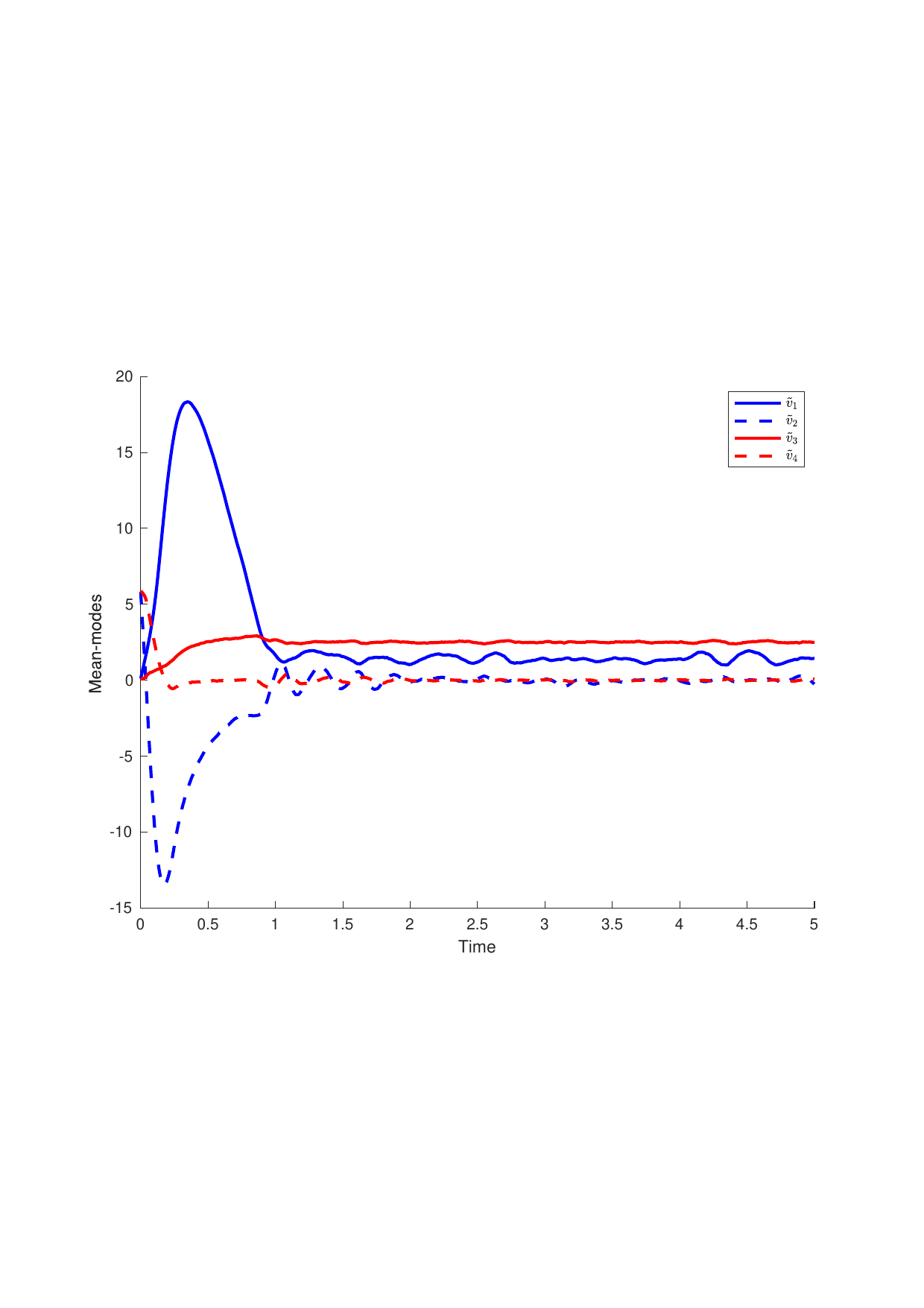}
    \includegraphics[width=0.45\textwidth]{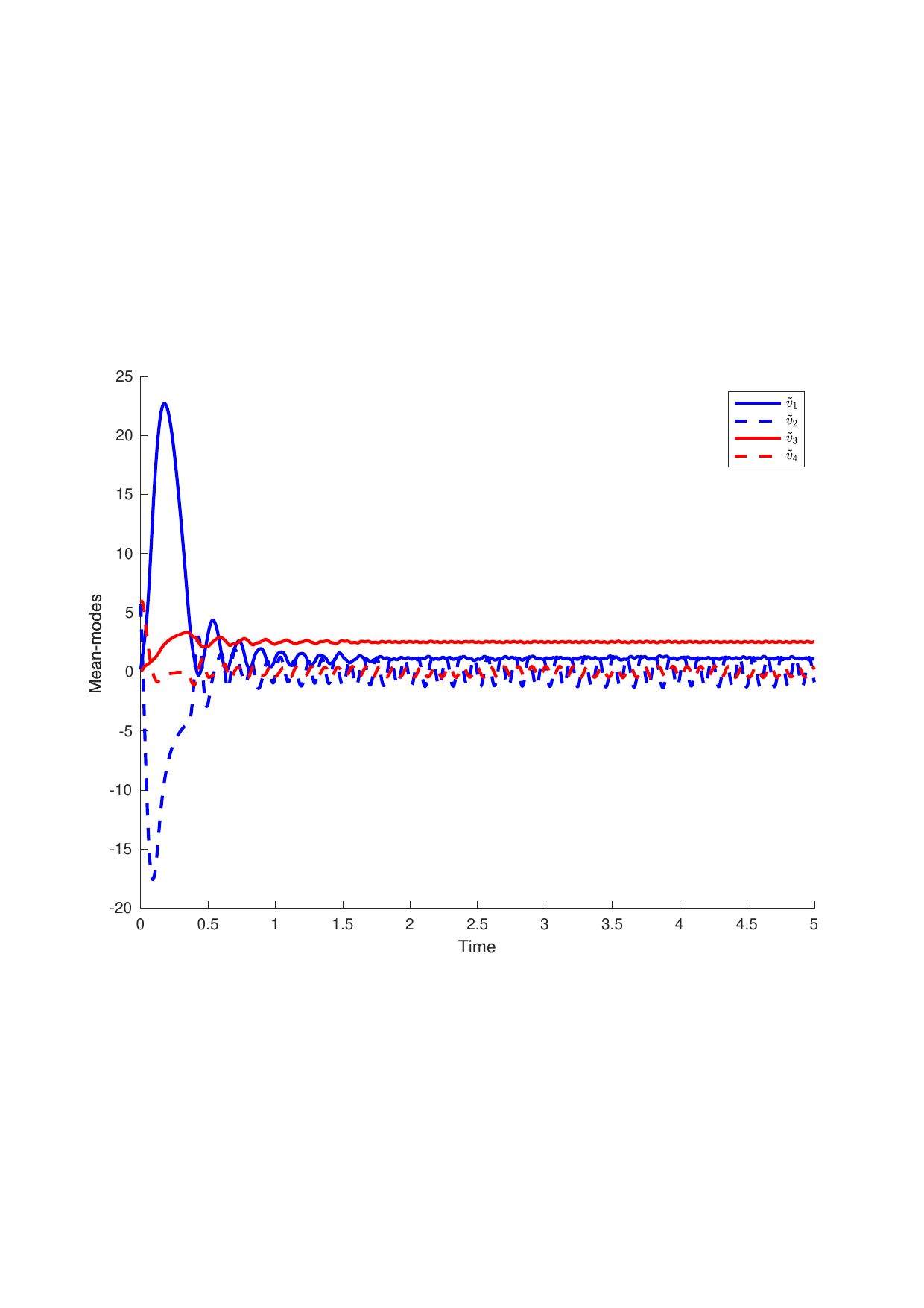}
  \caption{%
Stochastic evolution of the average excitation and inhibition. On the left $n=100$, and on the right $n=500$. The other parameters are $\tau_e = \tau_i = 0.5$, $\sigma_e = \sigma_i = 0.5$, $c_{ee} = [0.5,2]$, $c_{ei} = [4,4]$, $c_{ie} = [1,2]$ and $c_{ii} = [1,2]$. There does not seem to exist a balanced state in these simulations. As $n\to\infty$, oscillations arise, and the frequency increases with $n$.
  }
  \label{fig:variance_activity spatial}
\end{figure}

We require that the initial means are such that 
  \begin{align}
  \tilde{\mathcal{G}}^a(v(t), K_e(t), K_i(t) ) =& 0 \text{ where }\\
   \tilde{\mathcal{G}}^a:& \mathbb{R}^4 \times \mathbb{R} \times \mathbb{R} \mapsto \mathbb{R} \\
\tilde{\mathcal{G}}^1(v, K_e, K_i ) =&   \int_{\mathcal{E}}  \int_{\mathbb{R}} \bigg\lbrace \rho\big(  K_{e} , y \big) G_{e e}(y + m_e( \theta) )  -    \rho\big(   K_{i} , y \big) G_{e i}(y + m_i(\theta) ) \bigg\rbrace  dy d\theta \\
\tilde{\mathcal{G}}^2(v, K_e, K_i ) =&   \int_{\mathcal{E}}  \int_{\mathbb{R}} \bigg\lbrace \rho\big(  K_{e} , y \big) G_{i e}(y + m_e( \theta) )  -    \rho\big(   K_{i} , y \big) G_{i i}(y + m_i(\theta) ) \bigg\rbrace  dy d\theta \\
\tilde{\mathcal{G}}^3(v, K_e, K_i ) =&   \int_{\mathcal{E}} \cos(\theta) \int_{\mathbb{R}} \bigg\lbrace \rho\big(  K_{e} , y \big) G_{e e}(y + m_e( \theta) )  -    \rho\big(   K_{i} , y \big) G_{e i}(y + m_i(\theta) ) \bigg\rbrace  dy d\theta \\
\tilde{\mathcal{G}}^4(v, K_e, K_i ) =&   \int_{\mathcal{E}} \cos(\theta)  \int_{\mathbb{R}} \bigg\lbrace \rho\big(  K_{e} , y \big) G_{i e}(y + m_e( \theta) )  -    \rho\big(   K_{i} , y \big) G_{i i}(y + m_i(\theta) ) \bigg\rbrace  dy d\theta 
 \end{align}
%and the means are such that
%\begin{align}
%m_e(\theta) =& v^1_e + v^2_e \cos(\theta) \\
%m_i(\theta) =& v^1_i+ v^2_i \cos(\theta) .
%\end{align}
%I expect that there could be many solutions to these equations.
%As a start, would you be able to write code that finds the $v \in \mathbb{R}^4$ that is such that $\tilde{\mathcal{G}}^a(v, K_e, K_i ) =0$ for all $1\leq a \leq 4$. Then see if the equilibrium means for the high dimensional SDEs corresponds to this solution.
For the stochastic simulations, the state of the $j^{th}$ excitatory neuron is written as $z^j_{e,t} \in \mathbb{R}$, and the state of the $j^{th}$ inhibitory neuron is written as $z^j_{i,t} \in \mathbb{R}$. The dynamics is assumed to be of the form
\begin{align}
dz^j_{e,t}  =& \bigg\lbrace  -z^j_{e,t} / \tau_e + n^{-1/2} \sum_{k\in I_n}\bigg(  \mathcal{K}_{ee}(x^j_n , x^k_n) G_{ee}(z^k_{e,t}) - \mathcal{K}_{ei}(x^j_n , x^k_n)  G_{ei}(z^k_{i,t}) \bigg) \bigg\rbrace dt \nonumber \\ &+ \sigma_e dW^j_{e,t} \\
dz^j_{i,t} =& \bigg\lbrace  -z^j_{i,t} / \tau_i + n^{-1/2} \sum_{k\in I_n}\bigg( \mathcal{K}_{ie}(x^j_n , x^k_n) G_{ie}(z^k_{e,t}) -\mathcal{K}_{ii}(x^j_n , x^k_n)  G_{ii}(z^k_{i,t}) \bigg) \bigg\rbrace dt  \nonumber \\ &+ \sigma_i dW^j_{i,t} .
\end{align}
Here $\lbrace W^j_{e,t} , W^j_{i,t} \rbrace_{j\in I_n}$ are independent Brownian Motions. The initial conditions $\lbrace z^j_{e,0} , z^j_{i,0} \rbrace_{j\in I_n}$ are such that
\begin{align}
z^j_{e,0} &= m_e(0,x^j_n) + \zeta^j_e \sqrt{K_e(0)} \\
z^j_{i,0} &= m_i(0,x^j_n) + \zeta^j_i \sqrt{K_i(0)}.
\end{align}
where $\lbrace \zeta^j_e , \zeta^j_i \rbrace_{j\in I_n}$ are independent $\mathcal{N}(0,1)$. 

For this simple system, we were not able to find non-trivial spatially-extended balanced solutions. However it seems likely that spatially-extended balanced solutions might exist for a more complicated interaction structure (i.e. a `Mexican Hat' connectivity). This will be explored in a future work. In Figure \ref{fig:variance_activity spatial}, we plot a numerical simulation of the particle system, for which the system does not converge to a balanced state. However it intriguingly seems to exhibit oscillatory activity, with the frequency of the oscillations increasing with the system size $n$. This will be the subject of future research.

%Form the empirical measure
%\begin{align}
%\hat{\mu}^n_t = n^{-1}\sum_{j\in I_n} \delta_{x^j_n , z^j_{e,t} , z^j_{i,t}} \in %\mathcal{P}\big(  \mathbb{S}^1 \times \mathbb{R}^2 \big).
%\end{align}
%As $n\to \infty$, $\hat{\mu}^n_t \to \mu_t$. Here $\mu_t$ is such that for measurable $\mathcal{A} \subseteq \mathbb{S}^1$ and measurable $B_1,B_2 \subseteq \mathbb{R}$,
%\begin{align}
%\mu_t(A \times B_1 \times B_2) = \frac{1}{2\pi} \int_{\mathcal{A}} \int_{B_1}\rho\big( m_e(t,\theta) , V_{e}(t) , y \big)  dy \int_{B_2} \rho\big( m_i(t,\theta) , V_{i}(t) , y \big) dy  d\theta
%\end{align}
%It would be great to plot the empirical means
%\begin{align}
%\tilde{v}^1_{t} =& n^{-1}\sum_{j\in I_n} z^j_{e,t} \\
%\tilde{v}^2_{t} =& n^{-1}\sum_{j\in I_n} \cos(x^j_n) z^j_{e,t} \\
%\tilde{v}^3_{t} =& n^{-1}\sum_{j\in I_n}  z^j_{i,t} \\
%\tilde{v}^4_{t} =& n^{-1}\sum_{j\in I_n}  \cos(x^j_n) z^j_{i,t} \\
%\end{align}
%against the kinetic limit. We should find that the empirical means become invariant in time as $n\to\infty$.
%We are interested in situations where there exists a bump attractor, with $v^2_e(t) \neq 0$. For simplicity we start with $v^3_e(t) = v^3_i(t) = 0$ (as long as these elements are $0$ at time $0$, they will remain zero for all time).
 
 \section{Discussion}
 We have determined autonomous equations that describe the dynamics of balanced neural networks. Previous treatments have determined equations for the existence of a balanced state by taking the Fourier Transform in time, and requiring that this state is balanced. We instead prove that one only requires the balanced manifold to be linearly stable to perturbations in the same direction as the mean-field interaction.
 
It is immediate from our theorem that the high-dimensional balanced neural network will exhibit similar characteristics to that described in much of the literature \cite{VanVreeswijk1996,VanVreeswijk2003}. The neurons will be mostly decorrelated, asynchronous and temporally irregular. However in directions that lie in the span of the basis for the interactions, (i.e. $\lbrace h_a(x) \rbrace_{a=1}^M$), there will be very little variability (for large $n$), as has been predicted in (for instance) \cite{Rosenbaum2014,Darshan2018}.

An important question is to ask for which choices of parameters does the balanced manifold exist?  More particularly, when will the system stay near the balanced manifold for long periods of time? There already exists a literature that addresses this question. It is widely thought that the balanced manifold exists whenever there is fast inhibition; i.e. such that any transient increase in the activity of excitatory neurons almost immediately triggers an increase in inhibitory activity, which will counter the system blowing up.
 
 \section{Proofs}
 
 We begin by providing an overview of the method of proof. Specific details are provided in Section \ref{Subsection Proof Details}, including the proof of the existence of the limiting distribution in Lemma \ref{Lemma Existence of Hydro Limit}.
 
 Fix some $0 < T < \eta$. It follows from the definition of the balanced manifold that there exists $\xi_T > 0$ such that for all $t\leq T$, every eigenvalue of $\mathcal{J}\big( \bar{v}_e(t) ,\bar{v}_i(t) ,\mu_t\big)$ has real component less than or equal to $-\xi_T$. 
 
Lets start by determining a specific SDE for $\lbrace y^j_{e,t} , y^j_{i,t} \rbrace$. To this end, writing
  \begin{align}
  d\bar{w}^a_{e,t} =& n^{-1}\sum_{j\in I_n} \sum_{b=1}^M \mathcal{Q}^{-1}_{ab} h_b(x^j_n) \sigma_e(x^j_n , z^j_{e,t}) dW^j_{e,t} \\
   d\bar{w}^a_{i,t} =& n^{-1}\sum_{j\in I_n} \sum_{b=1}^M  \mathcal{Q}^{-1}_{ab} h_b(x^j_n) \sigma_i(x^j_n , z^j_{i,t}) dW^j_{i,t} 
  \end{align}
it follows from Ito's Lemma that
\begin{align}
 dv^a_{e,t}  = & n^{-1} \sum_{j\in I_n} \sum_{b=1}^M \mathcal{Q}^{-1}_{ab} h_b(x^j_n)\bigg\lbrace f_e(z^j_{e,t}) \nonumber \\ &+ n^{-1/2} \sum_{k\in I_n} \bigg(  \mathcal{K}_{ee}(x^j_n , x^k_n) G_{ee}(z^k_{e,t}) - \mathcal{K}_{ei}(x^j_n , x^k_n)  G_{ei}(z^k_{i,t}) \bigg) \bigg\rbrace dt  
  + d\bar{w}^a_{e,t} \label{eq: dv et} \\
 dv^a_{i,t}  =& n^{-1} \sum_{j\in I_n}\sum_{b=1}^M \mathcal{Q}^{-1}_{ab} h_b(x^j_n)  \bigg\lbrace f_i(z^j_{i,t}) \nonumber \\ &+ n^{-1/2} \sum_{k\in I_n} \bigg(  \mathcal{K}_{ie}(x^j_n , x^k_n) G_{ie}(z^k_{e,t}) - \mathcal{K}_{ii}(x^j_n , x^k_n)  G_{ii}(z^k_{i,t}) \bigg) \bigg\rbrace dt 
 + d\bar{w}^a_{i,t}, \label{eq: dv it}
\end{align}  
and
\begin{align}
dy^j_{e,t} =& \bigg( f(z^j_{e,t}) - n^{-1} \sum_{k\in I_n} \sum_{a,b = 1}^{M} Q^{-1}_{ab} f(z^k_{e,t}) h_a(x^k_{n})h_b(x^j_{n})   \bigg) dt \nonumber \\& + \sigma(x^j_n , z^j_{e,t})dW^j_{e,t} - \sum_{a=1}^M h_a(x^j_n)  d\bar{w}^a_{e,t} \label{eq: y j e t} \\
dy^j_{i,t} =& \bigg( f(z^j_{i,t}) -  n^{-1} \sum_{k\in I_n}  \sum_{a,b = 1}^{M} Q^{-1}_{ab}  f(z^k_{i,t}) h_a(x^k_{n}) h_b(x^j_{n})   \bigg) dt \nonumber \\ &  + \sigma(x^j_n, z^j_{i,t})dW^j_{i,t} - \sum_{a=1}^M h_a(x^j_n) d\bar{w}^a_{i,t} .\label{eq: y j i t} 
\end{align}
The initial values are such that
  \begin{align}
  y^j_{e,0} &= z^j_{e,0} -  \sum_{a = 1}^M v^a_e(0) h_a(x^j_n) \\
    y^j_{i,0} &= z^j_{i,0} -  \sum_{a = 1}^M v^a_i(0) h_a(x^j_n).
  \end{align}
Write
 \begin{align}
\ell_s =&   \norm{v_{e}(s) - \bar{v}_{e}(s) }^2 + \norm{v_{i}(s) - \bar{v}_i(s) }^2 \nonumber \\
=& \sum_{a=1}^M\bigg( \big( v^a_{e}(s) - \bar{v}^a_{e}(s) \big)^2 + \big( v^a_{i}(s) - \bar{v}^a_{i}(s) \big)^2 \bigg).
 \end{align}
 Our first main result is to bound the growth of $\ell_s$. The implication of the following Lemma is that, in a neighborhood of the balanced manifold, the growth of $\ell_s$ is dominated by the attractive pull of the balanced manifold.
 \begin{lemma} \label{Lemma Bound the frowth of ell t}
 There exist constants $\epsilon_T , c_T > 0$ (independent of $n$) such that for any stopping times $\zeta  < \iota \leq T $, as long as 
 \begin{align}
\sup_{r\leq \iota} d_W\big( \hat{\mu}^n_r , \mu_r \big) &\leq \epsilon_T \text{ and } \label{eq: ell bound cond 1}\\
\sup_{r\leq \iota} \ell_r &\leq \epsilon_T
\end{align}
then necessarily, it holds that 
\begin{multline}
\ell(\iota)  \leq \ell(\zeta)+ \tilde{w}_\iota -\tilde{w}_\zeta 
+ \int_\zeta^\iota \bigg\lbrace \sqrt{\ell(s)} c_T + c_T n^{-1}  + \sqrt{n}\bigg( c_T\sqrt{\ell(s)} d_W\big( \hat{\mu}^n_s , \mu_s \big)  -  \xi_T  \ell(s) / 2 \bigg)  \bigg\rbrace ds
\end{multline}
and $\tilde{w}_t$ is a continuous Martingale adapted to $\mathcal{F}$ with quadratic variation upperbounded by \newline $n^{-1}  c_T \int_\zeta^\iota \ell(s) ds $.
 \end{lemma}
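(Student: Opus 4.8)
\textbf{Proof plan for Lemma \ref{Lemma Bound the frowth of ell t}.}

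The plan is to apply Itô's formula to $\ell_s = \norm{v_e(s)-\bar v_e(s)}^2 + \norm{v_i(s)-\bar v_i(s)}^2$ and identify the dominant term. First I would expand $d\ell_s$. The drift decomposes into (a) the contribution of the difference $dv^a_{\alpha,t} - d\bar v^a_{\alpha}/dt\, dt$, and (b) the quadratic-variation (Itô-correction) term coming from $d\bar w^a_{\alpha,t}$. From \eqref{eq: dv et}-\eqref{eq: dv it}, the $O(n^{-1/2})\sum_{k}$ term, after multiplication by $h_b(x^j_n)$ and averaging over $j$, becomes $n^{1/2}$ times a quantity that is a discrete, finite-rank approximation of $\mathcal{G}^a_\alpha$ evaluated at $(v_e(s),v_i(s),\hat\mu^n_s)$. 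Here one uses the projection identity $\sum_j h_a(x^j_n) y^j_{\alpha,t}=0$ to rewrite $z^k_{\alpha,t} = y^k_{\alpha,t} + \sum_c v^c_\alpha(t) h_c(x^k_n)$ inside $G_{\alpha\beta}$, so that the $\sum_k$ term genuinely depends only on $(v,\hat\mu^n)$. Since $\mathcal{G}^a_\alpha(\bar v_e(s),\bar v_i(s),\mu_s)=0$ on the balanced manifold, I would write this as
\begin{multline*}
\mathcal{G}^a_\alpha(v_e(s),v_i(s),\hat\mu^n_s) = \mathcal{G}^a_\alpha(v_e(s),v_i(s),\hat\mu^n_s) - \mathcal{G}^a_\alpha(\bar v_e(s),\bar v_i(s),\mu_s) \\ = \sum_{b,\beta} \mathcal{J}^{ab}_{\alpha\beta}(\bar v_e(s),\bar v_i(s),\mu_s)\big(v^b_\beta(s)-\bar v^b_\beta(s)\big) + R^a_{\alpha},
\end{multline*}
where $R^a_\alpha$ is a remainder controlled by $C^2$-bounds on $G_{\alpha\beta}$ and by $d_W(\hat\mu^n_s,\mu_s)$: specifically $|R^a_\alpha| \le c_T\big(\ell(s) + \sqrt{\ell(s)}\, d_W(\hat\mu^n_s,\mu_s) + d_W(\hat\mu^n_s,\mu_s)\big)$, using Lipschitzness of $G_{\alpha\beta},\dot G_{\alpha\beta}$ in the measure argument and the smallness hypotheses \eqref{eq: ell bound cond 1} on $\ell$ and $d_W$. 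Pairing with $2\sum_a(v^a_\alpha(s)-\bar v^a_\alpha(s))$ and summing over $a,\alpha$, the leading term is $2n^{1/2}\langle v(s)-\bar v(s), \mathcal{J}(s)(v(s)-\bar v(s))\rangle$; since every eigenvalue of $\mathcal{J}(s)$ has real part $\le -\xi_T$, the symmetrized quadratic form obeys $\langle \xi, \mathcal{J}(s)\xi\rangle \le -\xi_T\norm{\xi}^2$ — \emph{this is the one genuinely delicate point}, because $\mathcal{J}$ is not symmetric, so I would need either a uniform-in-$t$ change of inner product (a Lyapunov metric $P(t)$ solving $P\mathcal{J} + \mathcal{J}^\top P \preceq -\xi_T P$) or, more simply, absorb the possibly-positive part of the antisymmetric/non-normal piece into the error by working with $\sup_{t\le T}$ bounds; modulo this, the leading term contributes $-n^{1/2}\xi_T\ell(s)$ as claimed (the factor $1/2$ in the statement gives room to swallow constants).

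Next I would collect the lower-order drift terms: (i) the $f_e,f_i$ contribution, namely $2\sum_{a,\alpha}(v^a_\alpha-\bar v^a_\alpha) \cdot n^{-1}\sum_{j,b}Q^{-1}_{ab}h_b(x^j_n) f_\alpha(z^j_{\alpha,t})$ minus the matching term in $d\bar v^a_\alpha/dt$, which by Lipschitzness of $f_\alpha$ and of the limiting drift in Corollary \ref{Corollary Autonomous Dynamics} is bounded by $\sqrt{\ell(s)}\,c_T + c_T d_W(\hat\mu^n_s,\mu_s)\sqrt{\ell(s)}$; since the latter is $\le \frac12 n^{-1/2}$ times something absorbable, or can be folded into the stated $\sqrt{n}\,c_T\sqrt{\ell(s)}d_W$ term, this is fine; and (ii) the Itô correction $\sum_{a,\alpha}\frac{d}{ds}\langle \bar w^a_\alpha\rangle_s = n^{-1}\sum_{a,b,b',j}Q^{-1}_{ab}Q^{-1}_{ab'}h_b h_{b'}\sigma_\alpha^2 \le c_T n^{-1}$, using $|\sigma_\alpha|\le C_\sigma$ and $\det Q > 1/2$. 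These give the $\sqrt{\ell(s)}c_T$, the $c_T n^{-1}$, and (after also retaining the $n^{1/2}$-scaled cross term from the measure-mismatch in $\mathcal{G}$ and its Jacobian linearization) the $\sqrt n\,c_T\sqrt{\ell(s)}\,d_W$ term in the bound.

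Finally the martingale part: $d\tilde w_s := 2\sum_{a,\alpha}(v^a_\alpha(s)-\bar v^a_\alpha(s))\, d\bar w^a_{\alpha,s}$, whose quadratic variation is $4\sum_{a,\alpha}\sum_{b,b',j} (v^a_\alpha - \bar v^a_\alpha)^2 Q^{-1}_{ab}Q^{-1}_{ab'} h_b(x^j_n)h_{b'}(x^j_n)\sigma_\alpha^2(x^j_n,z^j_{\alpha,s})\, n^{-2}$; bounding $\sigma_\alpha^2 \le C_\sigma^2$, using $n^{-1}\sum_j h_b h_{b'} = Q_{bb'}$ and $\det Q > 1/2$, this is $\le n^{-1} c_T \ell(s)$, as required. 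Integrating $d\ell_s$ from $\zeta$ to $\iota$ (legitimate since $\ell$ and the integrands are continuous and the stopping times are bounded) assembles exactly the displayed inequality. \emph{The main obstacle} is the non-normality of $\mathcal{J}$ in establishing the coercive bound $\langle \xi,\mathcal{J}(s)\xi\rangle \le -\xi_T\norm{\xi}^2/2$ uniformly in $s\le T$ from the mere spectral hypothesis; this is handled by passing to a time-dependent Lyapunov inner product $\langle\cdot,P(s)\cdot\rangle$ with $P$ bounded and bounded-below on $[0,T]$ (continuity of $s\mapsto \mathcal{J}(s)$ on the compact interval plus the uniform spectral gap $\xi_T$), redefining $\ell_s$ up to equivalent constants, which only changes the constants $c_T,\xi_T$ but not the structure of the bound.
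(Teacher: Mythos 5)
Your proposal follows essentially the same route as the paper's proof: apply It\^o's formula to $\ell$, recognize the $O(\sqrt{n})$ interaction term as $\mathcal{G}(v,\hat{\mu}^n_s)$, exploit $\mathcal{G}(\bar{v}_e,\bar{v}_i,\mu_s)=0$ on the balanced manifold together with a Lipschitz-in-measure bound giving the $\sqrt{n}\,c_T\sqrt{\ell}\,d_W$ term, linearize via the Jacobian to extract $-\xi_T\sqrt{n}\,\ell/2$, and bound the remaining drift, It\^o correction, and martingale quadratic variation exactly as stated. The only differences are cosmetic or in your favor: you Taylor-expand at $(\bar{v},\mu_s)$ with an explicit remainder where the paper uses the mean-value form with the Jacobian at an intermediate point and at $\hat{\mu}^n_s$ (plus continuity of eigenvalues), and you explicitly confront the fact that a spectral bound on the non-symmetric $\mathcal{J}$ does not by itself give coercivity of the Euclidean quadratic form—handling it with a Lyapunov inner product—whereas the paper passes over this step silently; your treatment is sound and, if anything, more careful.
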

 
\begin{corollary} \label{Corollary Bound ell t}
\begin{multline}
\lsup{n} n^{-1/3} \log \mathbb{P}\bigg( \text{ For some }t\leq T, \text{ it holds that }\sqrt{\ell_t} >\big( n^{-1/4} +  2 \frac{c_T}{\xi_T} d_W\big( \hat{\mu}^n_t , \mu_t \big)\big) \\ \text{ and } \sup_{s\leq t} d_W\big( \hat{\mu}^n_s , \mu_s \big) \leq \epsilon_T \text{ and }\sup_{s \leq t}   \ell_s   \leq \epsilon_T \bigg) < 0.
\end{multline}
\end{corollary}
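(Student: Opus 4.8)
The plan is to treat $b_n(t):=n^{-1/4}+2\tfrac{c_T}{\xi_T}\,d_W\big(\hat\mu^n_t,\mu_t\big)$ as a random moving barrier for $\sqrt{\ell_t}$ and to bound the probability that $\sqrt{\ell_t}$ crosses it from below while the hypotheses of Lemma~\ref{Lemma Bound the frowth of ell t} remain in force. On the event in the statement there is a first crossing time $\tau:=\inf\{t\le T:\ \sqrt{\ell_t}\ge b_n(t)\}\le T$; since $\ell_0$ is deterministic with $\ell_0\to0$ and $d_W(\hat\mu^n_0,\mu_0)\to0$ (Hypothesis~\ref{Hypothesis Initial Convergence}), for $n$ large $\sqrt{\ell_0}<b_n(0)-\tfrac12 n^{-1/4}$, and then continuity of $t\mapsto(\ell_t,d_W(\hat\mu^n_t,\mu_t))$ forces $\sqrt{\ell_\tau}=b_n(\tau)$ while \eqref{eq: ell bound cond 1} (and $\sup_{s\le\tau}\ell_s\le\epsilon_T$) hold on $[0,\tau]$. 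I would then go back to the last instant $\zeta:=\sup\{s\le\tau:\ \sqrt{\ell_s}\le b_n(s)-\tfrac12 n^{-1/4}\}$, so that $0\le\zeta<\tau$, on all of $[\zeta,\tau]$ one has the margin $\sqrt{\ell_s}-2\tfrac{c_T}{\xi_T}d_W(\hat\mu^n_s,\mu_s)\ge\tfrac12 n^{-1/4}$, and at the endpoints $\sqrt{\ell_\zeta}=b_n(\zeta)-\tfrac12 n^{-1/4}$ and $\sqrt{\ell_\tau}=b_n(\tau)$.

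The point of the margin is that the $O(\sqrt n)$ term in the drift bound of Lemma~\ref{Lemma Bound the frowth of ell t} equals $-\tfrac12\xi_T\sqrt n\,\sqrt{\ell_s}\big(\sqrt{\ell_s}-2\tfrac{c_T}{\xi_T}d_W(\hat\mu^n_s,\mu_s)\big)$, which on $[\zeta,\tau]$ is $\le -\tfrac14\xi_T n^{1/4}\sqrt{\ell_s}$; since $\sqrt{\ell_s}\ge\tfrac12 n^{-1/4}$ there, for $n$ large the entire integrand in Lemma~\ref{Lemma Bound the frowth of ell t} is bounded above by a negative constant, say $-\tfrac1{32}\xi_T$. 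Hence Lemma~\ref{Lemma Bound the frowth of ell t}, applied with the stopping times $\zeta<\tau$, gives $\tilde w_\tau-\tilde w_\zeta\ge(\ell_\tau-\ell_\zeta)+\tfrac1{32}\xi_T(\tau-\zeta)$, where $\tilde w$ is a continuous martingale whose quadratic variation on $[\zeta,\tau]$ is at most $c_T\epsilon_T n^{-1}(\tau-\zeta)$. Moreover $\sqrt{\ell_\tau}-\sqrt{\ell_\zeta}=\tfrac12 n^{-1/4}+2\tfrac{c_T}{\xi_T}\big(d_W(\hat\mu^n_\tau,\mu_\tau)-d_W(\hat\mu^n_\zeta,\mu_\zeta)\big)$, so provided the barrier has not descended relative to $\sqrt\ell$ by more than $\tfrac14 n^{-1/4}$ over $[\zeta,\tau]$ (in particular whenever $d_W(\hat\mu^n_\cdot,\mu_\cdot)$ moves by $o(n^{-1/4})$ there) one obtains $\ell_\tau-\ell_\zeta\ge\tfrac14 n^{-1/4}\cdot n^{-1/4}=\tfrac14 n^{-1/2}$.

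To close, I would invoke the Bernstein inequality for continuous martingales, $\mathbb{P}\big(\tilde w_\tau-\tilde w_\zeta\ge a,\ \text{QV of }\tilde w\text{ on }[\zeta,\tau]\le b\big)\le \exp(-a^2/(2b))$ (valid between stopping times after optional stopping), splitting on the excursion length $\delta:=\tau-\zeta$: for small $\delta$ the estimate $\ell_\tau-\ell_\zeta\ge\tfrac14 n^{-1/2}$ forces $\tilde w_\tau-\tilde w_\zeta\ge\tfrac14 n^{-1/2}$ against quadratic variation $\le c_T\epsilon_T n^{-1}\delta$, while for $\delta$ not small the accumulated drift $\asymp\delta$ forces $\tilde w_\tau-\tilde w_\zeta\gtrsim\delta$ against the same bound. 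Optimising the split point between the two regimes and taking a union bound over a partition of $[0,T]$ into finitely many sub-intervals (on each of which the auxiliary oscillation estimate below is verified) yields a bound of order $\exp(-c\,n^{1/3})$ on the probability in the statement, which is exactly the claim; disposal of the deterministic initial datum is routine via Hypothesis~\ref{Hypothesis Initial Convergence}.

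\noindent\textbf{The main obstacle} is that $b_n(t)$ is genuinely random, a non-monotone functional of the empirical measure through $d_W(\hat\mu^n_t,\mu_t)$; accounting for the possibility that the barrier ``crashes down'' onto $\sqrt\ell$ requires a quantitative control of how fast $t\mapsto d_W(\hat\mu^n_t,\mu_t)$ can move, and this is where the argument needs real care rather than routine bookkeeping. The needed estimate is nonetheless available without a bootstrap: because $f_\alpha$ and $\sigma_\alpha$ are \emph{globally} bounded, the coordinates $y^j_{\alpha,\cdot}$ have uniformly bounded drift and $C_\sigma$-bounded diffusion (the strong $n^{-1/2}$ interaction having been absorbed into $v$), so by coupling each $y^j_{\alpha,\cdot}$ to an independent copy of the limiting process driven by the same $W^j_\alpha$ one controls $d_W(\hat\mu^n_t,\mu_t)$ by a process with $O(1)$ drift plus a fluctuation, and combining with the $\tfrac12$-Hölder regularity of the deterministic $t\mapsto\mu_t$ one gets, off an event that is exponentially small in $n$, that $d_W(\hat\mu^n_\cdot,\mu_\cdot)$ cannot move by $\tfrac14 n^{-1/4}$ except over a time-interval that is long enough for the strictly negative drift of $\ell$ to dominate. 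It is precisely this interplay — the barrier can only descend fast if $\ell$ has in the meantime been pushed strongly downward, so the would-be crossing again costs an anomalously large martingale increment — that converts the barrier-crash scenario into an exponentially-rare event and completes the case analysis.
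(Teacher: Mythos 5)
Your core mechanism is the same as the paper's: on an excursion where $\sqrt{\ell_s}$ sits within $O(n^{-1/4})$ of the level $2\tfrac{c_T}{\xi_T}d_W(\hat\mu^n_s,\mu_s)+n^{-1/4}$, the $\sqrt{n}$ term in Lemma \ref{Lemma Bound the frowth of ell t} gives a drift bounded above by a strictly negative constant, so a crossing can only be produced by an anomalous increment of $\tilde w$, whose quadratic variation is $O(n^{-1})$. The paper implements this with a single global bound $\sup_{t\leq T\wedge\tau}|\tilde w_t|\leq \tfrac12 n^{-1/3}$ (time change plus Doob), which costs $e^{-c n^{1/3}}$ --- this is precisely where the $n^{1/3}$ speed in the statement comes from --- followed by a pathwise contradiction between two level-crossing times; your Bernstein-between-endpoints bound with a case split over the excursion length $\delta=\tau-\zeta$ would, if completed, even give a sharper rate, but it introduces a real problem: your $\zeta$ is a \emph{last-exit} time, not a stopping time, so ``Bernstein between stopping times after optional stopping'' is not valid as stated. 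The discretization/union-bound you gesture at is the standard repair, but it is not carried out, and the simpler route (the paper's global sup bound on $\tilde w$, which makes the start point of the excursion irrelevant) would avoid the issue entirely.

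The more serious gap is the barrier-crash case, which you rightly identify as the crux but do not close. The estimate you can actually extract from bounded drift and bounded diffusion is a modulus of order $\sqrt{\delta}$ for $d_W(\hat\mu^n_\cdot,\mu_\cdot)$ over a window of length $\delta$ (the empirical average of $|W^j_t-W^j_s|$ is of order $\sqrt{t-s}$, not $t-s$), and this is exactly marginal: a descent of the barrier by $\tfrac12 n^{-1/4}$ is then conceivable on windows of length $\delta\sim n^{-1/2}$, over which the drift of $\sqrt{\ell}$ accumulates only $\sim n^{1/4}\delta\sim n^{-1/4}$. Whether the drift dominates is then a comparison of $n$-independent constants that you do not control: since $\tfrac12 n^{-1/4}+\tfrac{\xi_T}{16}n^{1/4}\delta\geq \sqrt{\xi_T/8}\,\sqrt{\delta}$ by AM--GM, your argument excludes the crash only if the modulus constant of $d_W$ is smaller than a specific multiple of $\xi_T^{3/2}/c_T$, which need not hold; so the sentence ``the interval is long enough for the strictly negative drift to dominate'' is an assertion, not a proof. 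Moreover the auxiliary oscillation estimate itself (a uniform-in-time, exponentially good concentration bound for the increments of $d_W(\hat\mu^n_\cdot,\mu_\cdot)$) is nowhere established in your sketch and is not among the lemmas available at this stage. In fairness, the paper's own proof is terse on exactly this point --- it equates the increment of $\ell$ with the gap between the two level sets, tacitly ignoring the motion of $\sqrt{\ell}\,d_W$ --- so you have located the genuinely delicate step; but your patch, as written, does not repair it, and a correct completion needs either a finer use of the fact that on the margin region $\sqrt{\ell_s}$ is itself at least of order $d_W(\hat\mu^n_s,\mu_s)$ (so the drift strengthens proportionally when the barrier is high), or a reformulated level-set argument in which the barrier motion is controlled by quantities already bounded in Lemma \ref{Lemma Bound the frowth of ell t}.
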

We next define an approximate set of SDEs $\big\lbrace \bar{y}^j_{\alpha,t} \big\rbrace_{\alpha\in\lbrace e,i \rbrace, j \in I_n}$. The definition of $\bar{y}^j_{\alpha,t}$ is the same as the definition of  $y^j_{\alpha,t}$, except that $v_t$ is replaced by $\bar{v}_t$ ($\bar{v}_t$ is the limiting mean that exists thanks to Lemma \ref{Lemma Existence of Hydro Limit}). They are defined to be the strong solution of the system of SDEs 
\begin{align}
d\bar{y}^{j}_{e,t} =& \bigg( f_e(\bar{z}^{j}_{e,t}) - n^{-1} \sum_{a,b = 1}^{M} \sum_{k\in I_n} Q^{-1}_{ab}  f_e(\bar{z}^k_{e,t})h_b(x^k_{n})  h_a(x^j_{n})  \bigg) dt  + \sigma_e(x^j_n , \bar{z}^{j}_{e,t})dW^j_{e,t}  \label{eq: y j e t approximate} \\
d\bar{y}^{j}_{i,t} =& \bigg( f_i(\bar{z}^{j}_{i,t}) - n^{-1} \sum_{a,b = 1}^{M} \sum_{k\in I_n} Q^{-1}_{ab}  f_i(\bar{z}^k_{i,t})h_b(x^k_{n})  h_a(x^j_{n})  \bigg) dt  + \sigma_i(x^j_n , \bar{z}^{j}_{i,t})dW^j_{i,t}  ,\label{eq: y j i t approximate} 
\end{align}
where
\begin{align}
\bar{z}^{j}_{e,t} &= \bar{y}^{j}_{e,t} + \sum_{a = 1}^M h_a\big( x^j_n \big) \bar{v}^{a}_{e}(t) \\
\bar{z}^{j}_{i,t} &= \bar{y}^{j}_{i,t} + \sum_{a = 1}^M  h_a\big( x^j_n \big) \bar{v}^{a}_{i}(t)
\end{align}
with initial conditions $\bar{y}^{j}_{e,0} = y^j_{e,0}$ and  $\bar{y}^{j}_{i,0} = y^j_{i,0}$. Next, we note the existence of a strong solution.
\begin{lemma} \label{Lemma Strong Intermediate}
There exists a unique strong solution to \eqref{eq: y j e t approximate} - \eqref{eq: y j i t approximate}.
\end{lemma}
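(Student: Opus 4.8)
The plan is to observe that, for each fixed $n$, the system \eqref{eq: y j e t approximate}--\eqref{eq: y j i t approximate} is a closed finite-dimensional system of $2n$ It\^o SDEs for the vector $\bar{\mathbf y}_t := \big( \bar y^j_{e,t}, \bar y^j_{i,t} \big)_{j\in I_n} \in \mathbb{R}^{2n}$, driven by the $2n$-dimensional Brownian motion $\big( W^j_{e,t}, W^j_{i,t} \big)_{j\in I_n}$ with a diagonal diffusion matrix, and then to check that its coefficients satisfy the classical hypotheses that guarantee a pathwise-unique, non-exploding strong solution (with continuous sample paths) on $[0,T]$.

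First I would write the drift of the $\bar y^j_{\alpha}$-equation as a deterministic map $(t,\bar{\mathbf y}) \mapsto b^j_{\alpha}(t,\bar{\mathbf y})$, substituting $\bar z^j_{\alpha,t} = \bar y^j_{\alpha,t} + \sum_{a=1}^M h_a(x^j_n)\bar v^a_{\alpha}(t)$. The elementary inputs are: (i) $\bar v_e,\bar v_i \in \mathcal{C}\big([0,T],\mathcal{C}_M(\mathcal{E})\big)$, so each $t\mapsto \bar v^a_{\alpha}(t)$ is continuous, hence bounded on $[0,T]$; (ii) each $h_a$ is continuous on the compact set $\mathcal{E}$, hence bounded; (iii) since $\lim_n Q = \mathbf{I}$ we may (as assumed) take $n$ large enough that $\|Q - \mathbf{I}\| < 1/2$, so that $Q^{-1}$ exists with $\|Q^{-1}\|\le 2$; and (iv) $f_e,f_i \in \mathcal{C}^2(\mathbb{R})$ with uniformly bounded first derivatives, hence globally Lipschitz and of at most linear growth on $\mathbb{R}$. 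Combining (i)--(iv), $b^j_{\alpha}(t,\bar{\mathbf y})$ is a fixed linear combination --- with coefficients bounded uniformly in $t$, $j$ and the summation index $k\in I_n$ --- of terms of the form $f_{\alpha}\big(\bar y^k_{\alpha} + g^k_{\alpha}(t)\big)$ with $g^k_{\alpha}$ continuous and bounded on $[0,T]$; it is therefore globally Lipschitz in $\bar{\mathbf y}$ and of at most linear growth in $\bar{\mathbf y}$, uniformly over $t\in[0,T]$, and jointly continuous in $(t,\bar{\mathbf y})$. The diffusion coefficient of the $\bar y^j_{\alpha}$-equation is $(t,\bar{\mathbf y}) \mapsto \sigma_{\alpha}\big(x^j_n,\ \bar y^j_{\alpha} + \sum_{a=1}^M h_a(x^j_n)\bar v^a_{\alpha}(t)\big)$, which is jointly continuous, locally Lipschitz in $\bar{\mathbf y}$ uniformly on $[0,T]$ (since $\sigma_{\alpha}\in\mathcal{C}^2$ and $\bar v_{\alpha}$ is continuous), and bounded by $C_{\sigma}$, hence trivially of linear growth.

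With these bounds in place I would conclude by invoking the standard well-posedness theory for It\^o SDEs: locally Lipschitz coefficients together with a global linear growth bound yield a pathwise-unique strong solution on $[0,T]$ with no explosion (see e.g.\ \cite[Ch.~5]{Karatzas1991} for the building blocks --- Picard iteration under a global Lipschitz condition, pathwise uniqueness under local Lipschitz, and the no-explosion estimate). If one wishes to avoid quoting the local-Lipschitz version directly, the clean route is a truncation argument: for each $R>0$ replace each $\sigma_{\alpha}(x,\cdot)$ by a globally Lipschitz modification agreeing with it on $\{|z|\le R\}$ (possible because $\sigma_{\alpha}$ is $\mathcal{C}^2$), solve the resulting globally Lipschitz system by Picard iteration, and patch the solutions consistently on the stopping times $\tau_R := \inf\{ t : \max_{j\in I_n}\big(|\bar z^j_{e,t}| \vee |\bar z^j_{i,t}|\big) \ge R \}$ using pathwise uniqueness; the a priori estimate obtained by applying It\^o's formula to $|\bar{\mathbf y}_{t\wedge\tau_R}|^2$, using the linear growth of $b^j_{\alpha}$, the bound $|\sigma_{\alpha}|\le C_{\sigma}$, and Gr\"onwall's inequality, gives $\sup_R \mathbb{E}\big[\sup_{t\le T\wedge\tau_R}|\bar{\mathbf y}_t|^2\big] < \infty$, whence $\mathbb{P}(\tau_R\le T)\to 0$ and $\tau_R\uparrow\infty$, so the patched solution extends to all of $[0,T]$. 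I do not expect a genuine obstacle; the one point worth flagging is that the regularity hypothesis bounds $|\sigma_e|,|\sigma_i|$ but \emph{not} their derivatives, so $\sigma_{\alpha}$ need not be globally Lipschitz --- which is precisely why one needs either the local-Lipschitz-plus-linear-growth statement or the truncation argument above rather than the most elementary global-Lipschitz existence theorem.
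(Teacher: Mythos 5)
Your proposal is correct and follows the very route the paper has in mind: the paper omits the proof as a ``standard result in SDE Theory'' citing \cite{Karatzas1991}, and for fixed $n$ the system \eqref{eq: y j e t approximate}--\eqref{eq: y j i t approximate} is indeed a finite-dimensional It\^o system whose drift is globally Lipschitz with linear growth (bounded $f'_\alpha$, bounded $h_a$, bounded $Q^{-1}$, continuous $\bar{v}$ on $[0,T]$) and whose diffusion is bounded and locally Lipschitz, so the classical well-posedness theory applies. Your observation that the hypotheses bound $|\sigma_\alpha|$ but not its derivatives, so one needs the local-Lipschitz-plus-linear-growth (or truncation) version rather than the global-Lipschitz Picard theorem, is a worthwhile extra precision beyond what the paper spells out.
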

The proof of Lemma \ref{Lemma Strong Intermediate} is a standard result in SDE Theory \cite{Karatzas1991} and is omitted. Let $\tilde{\mu}^n_t \in \mathcal{P}\big( \mathcal{E} \times \mathbb{R}^2 \big)$ be the empirical measure at time $t$, i.e.

\begin{align}
\tilde{\mu}^n_t = n^{-1}\sum_{j\in I_n} \delta_{x^j_n , \bar{y}^j_{e,t} , \bar{y}^j_{i,t}}.
\end{align}
Next we notice that $\tilde{\mu}^n_t$ must converge to $\mu_t$ as $n\to\infty$.
\begin{lemma} \label{Lemma Law of Large Numbers}
$\mathbb{P}$-almost-surely,
\begin{align}
\lim_{n\to\infty} \sup_{t\leq T} d_W\big( \tilde{\mu}^n_t , \mu_t \big) = 0.
\end{align}
\end{lemma}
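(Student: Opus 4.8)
The intermediate system \eqref{eq: y j e t approximate}--\eqref{eq: y j i t approximate} is a finite-$n$ mean-field discretisation of the McKean--Vlasov system \eqref{eq: limiting y e x t}--\eqref{eq: limiting y i x t}: the spatial integral against $\kappa$ and the expectation $\mathbb{E}^{\mu_{x'}}$ are replaced by $n^{-1}\sum_{k\in I_n}$, up to the $Q^{-1}$ correction, which is $O(\norm{\mathbf{I}-Q^{-1}})$--close to the identity by Hypothesis \ref{Hypothesis Distribution}; note that here the excitatory and inhibitory equations decouple, so the two populations can be handled separately. The plan is a synchronous coupling: for each $j$ let $\tilde y^j_{\alpha,\cdot}$ solve the limiting SDE \eqref{eq: limiting y e x t}/\eqref{eq: limiting y i x t} with spatial parameter $x^j_n$, driven by the same Brownian motion $W^j_{\alpha,\cdot}$ and started from $\tilde y^j_{\alpha,0}=y^j_{\alpha,0}$; put $\tilde z^j_{\alpha,s}:=\tilde y^j_{\alpha,s}+\bar v_\alpha(x^j_n,s)$ and $\check\mu^n_t:=n^{-1}\sum_{j\in I_n}\delta_{x^j_n,\tilde y^j_{e,t},\tilde y^j_{i,t}}$. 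The processes $\{\tilde y^j\}_{j\in I_n}$ are independent (though not identically distributed, owing to the distinct positions and initial data), and their empirical measure should converge to $\mu$ because $\hat\mu^n_0\to\mu_0$ (Hypothesis \ref{Hypothesis Initial Convergence}) and the limiting solution flow is continuous in $(x,y_0)$.

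\textbf{Step 1 (coupling estimate).} Subtract the SDEs for $\bar y^j_{\alpha,\cdot}$ and $\tilde y^j_{\alpha,\cdot}$, apply Itô to $|\bar y^j_{\alpha,t}-\tilde y^j_{\alpha,t}|^2$, and use that $f_\alpha,\sigma_\alpha$ are Lipschitz (bounded first derivatives, after the routine localisation afforded by the uniform moment bounds) and that $\bar v_\alpha$ is bounded on $[0,T]$ by Lemma \ref{Lemma Existence of Hydro Limit}. Averaging over $j$, applying Burkholder--Davis--Gundy to the martingale terms, and writing $R^n_t:=n^{-1}\sum_{j\in I_n}\sum_{\alpha}\mathbb{E}\big[\sup_{s\le t}|\bar y^j_{\alpha,s}-\tilde y^j_{\alpha,s}|^2\big]$, one should obtain
\[
R^n_t\;\le\;C_T\int_0^t R^n_s\,ds\;+\;C_T\int_0^t\Big(\norm{\mathbf{I}-Q^{-1}}^2 B^n_s+\Delta^n_s\Big)\,ds,
\]
with $B^n_s$ having moments bounded uniformly in $n$ and $\Delta^n_s$ the mean-field consistency error $\sup_{a\le M}\mathbb{E}\big[\big(n^{-1}\sum_k f_e(\tilde z^k_{e,s})h_a(x^k_n)-\int_{\mathcal{E}}h_a(x')\mathbb{E}^{\mu_{x'}}[f_e(u_{e,s}+\bar v_e(x',s))]\,d\kappa(x')\big)^2\big]$ plus its $i$-analogue. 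By independence, $\Delta^n_s$ splits into a variance contribution of order $n^{-1}$ and a squared bias equal to the difference between $\hat\mu^n_0$ and $\mu_0$ integrated against $(x,y_e)\mapsto h_a(x)\mathbb{E}[f_e(\Phi^x_s(y_e)+\bar v_e(x,s))]$ ($\Phi^x_s$ the limiting flow); the latter vanishes by Hypothesis \ref{Hypothesis Initial Convergence} and a uniform-integrability argument, needed because this test function grows only linearly (only the \emph{derivatives} of $f_e$ are bounded). Since $f_e$ is Lipschitz, $s\mapsto\Delta^n_s$ is equicontinuous, so $\sup_{s\le T}\Delta^n_s\to0$ and Gronwall gives $\sup_{t\le T}R^n_t\to0$.

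\textbf{Step 2 (almost-sure upgrade and conclusion).} Let $Y_n:=n^{-1}\sum_{j\in I_n}\sum_\alpha\sup_{s\le T}|\bar y^j_{\alpha,s}-\tilde y^j_{\alpha,s}|^2$, so $\mathbb{E}Y_n=R^n_T\to0$. Viewing $\sqrt{Y_n}$ and $d_W(\check\mu^n_t,\check\mu^{n,*}_t)$ (with $\check\mu^{n,*}_t$ the law-averaged measure) as functionals of the $n$ independent Brownian motions with per-path sensitivity $O(n^{-1/2})$, a Wiener-space Gaussian concentration inequality --- in the spirit of the estimates behind Corollary \ref{Corollary Bound ell t} --- gives tails $\exp(-c_T n t^2)$, so Borel--Cantelli yields $\sqrt{Y_n}\to0$ and $\sup_{t\le T}d_W(\check\mu^n_t,\check\mu^{n,*}_t)\to0$ almost surely (uniformity in $t$ via standard path-tightness estimates). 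Separately, $\sup_{t\le T}d_W(\check\mu^{n,*}_t,\mu_t)\to0$ deterministically, by pushing $\hat\mu^n_0\to\mu_0$ through the continuous flow map and upgrading weak to Wasserstein convergence with the second-moment bound. Finally, matching particle $j$ to particle $j$ (and $x^j_n$ to itself) gives $d_W(\tilde\mu^n_t,\check\mu^n_t)\le n^{-1}\sum_{j,\alpha}|\bar y^j_{\alpha,t}-\tilde y^j_{\alpha,t}|\le\sqrt{2Y_n}$ uniformly in $t\le T$, and two applications of the triangle inequality for $d_W$ conclude. (An alternative avoids the explicit coupling: prove a.s. relative compactness of $\{\tilde\mu^n\}$ from moment and tightness bounds, show every limit point solves the nonlinear martingale problem of \eqref{eq: limiting y e x t}--\eqref{eq: limiting y i x t}, and invoke the uniqueness in Lemma \ref{Lemma Existence of Hydro Limit}; the same analytic difficulties reappear in the martingale-problem passage.)

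\textbf{Main obstacle.} I expect the crux to be the bias term in $\Delta^n_s$, equivalently the convergence $\check\mu^{n,*}_t\to\mu_t$: one must identify the limit of $n^{-1}\sum_k\mathbb{E}[f_e(\tilde z^k_{e,s})]h_a(x^k_n)$ with $\int_{\mathcal{E}}h_a(x')\mathbb{E}^{\mu_{x'}}[f_e(u_{e,s}+\bar v_e(x',s))]\,d\kappa(x')$, which needs joint continuity of the limiting flow in the spatial parameter and initial datum \emph{together with} a uniform-integrability argument --- precisely because only the derivatives of the $f_\alpha$, not the $f_\alpha$ themselves, are assumed bounded, so the relevant test functions have only linear growth and mere weak convergence of $\hat\mu^n_0$ does not transfer. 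The rest --- the Itô/Gronwall bookkeeping of Step 1 and the concentration of Step 2 --- is essentially routine given the uniform moment bounds and the boundedness of the diffusion coefficients.
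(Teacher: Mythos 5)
Your plan is correct in outline, but it takes a genuinely different (and considerably heavier) route than the paper. The paper's entire proof is two sentences: it asserts that the paths $\big\lbrace \bar{y}^{j}_{e,[0,T]},\bar{y}^{j}_{i,[0,T]}\big\rbrace_{j\in I_n}$ are independent $\mathcal{C}([0,T],\mathbb{R})^2$-valued random variables and invokes the Law of Large Numbers directly for their empirical measure. You instead introduce a second, synchronous coupling to auxiliary particles $\tilde{y}^j$ solving the limiting equations \eqref{eq: limiting y e x t}--\eqref{eq: limiting y i x t} at the frozen positions $x^j_n$, prove an It\^o/Gronwall coupling estimate, and then run a concentration plus Borel--Cantelli argument for the truly independent auxiliary system. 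What your extra layer buys is precisely the point the paper glosses over: as written, the intermediate system \eqref{eq: y j e t approximate}--\eqref{eq: y j i t approximate} is \emph{not} a collection of independent SDEs, because each drift contains the empirical projection term $n^{-1}\sum_{a,b}\sum_{k\in I_n}Q^{-1}_{ab}f_\alpha(\bar{z}^k_{\alpha,t})h_b(x^k_n)h_a(x^j_n)$, which couples all particles; your Step 1 is exactly the estimate needed to replace this empirical projection by its deterministic limit and thereby reduce to an independent (non-identically distributed) family. You also make explicit several points the paper leaves implicit: the particles are independent but not i.i.d.\ (distinct positions and deterministic initial data), so convergence of $\hat{\mu}^n_0$ from Hypothesis \ref{Hypothesis Initial Convergence} must be pushed through a flow that is continuous in $(x,y_0)$; weak convergence must be upgraded to $d_W$-convergence (test functions of linear growth, hence a uniform-integrability/moment argument); and the supremum over $t\leq T$ requires either concentration or tightness in path space. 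The cost of your route is length and some technical overhead (Wiener-space concentration, equicontinuity of the consistency error), where the paper settles for a one-line LLN appeal; but your argument is the more complete one, and no step in your plan appears to be wrong in principle.
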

\begin{proof}
$\big\lbrace \bar{y}^{j}_{e,[0,t]} , \bar{y}_{i,[0,t]}^j \big\rbrace_{j\in I_n}$ are independent  $\mathcal{C}\big( [0,T] , \mathbb{R} \big)^2$-valued random variables. The Lemma therefore follows from the Law of Large Numbers.  
\end{proof}
%We now outline the proof of Theorem 
%\begin{proof}
%There exists a constant $C > 0$ (independent of $n$) such that
%\begin{align}
%\phi_t \leq \phi_0 +C n^{-1/4} +C \int_0^t  \phi_s ds
%\end{align}
%\end{proof}
Write $y^{j}_{\alpha,t} - \bar{y}^j_{\alpha,t} = u^j_{\alpha,t}$. We find that
\begin{multline*}
du^j_{\alpha,t} = \bigg(f_\alpha(z^{j}_{\alpha,t}) - f_\alpha(\bar{z}^{j}_{\alpha,t}) - n^{-1} \sum_{k\in I_n} \sum_{a,b = 1}^{M} Q^{-1}_{ab} h_a(x^j_{n})h_b(x^k_{n})\bigg( f_\alpha(z^{k}_{\alpha,t}) - f_\alpha(\bar{z}^{k}_{\alpha,t})  \bigg)  \bigg)  dt \\ + \bigg(\sigma_\alpha(x^j_n, z^{j}_{\alpha,t}) -  \sigma_\alpha(x^j_n, \bar{z}^{j}_{\alpha,t}) \bigg) dW^j_{\alpha,t}
\end{multline*}
Define the stopping times 
\begin{align}
\zeta_n =& \inf\bigg\lbrace t \leq T  \; : \;   \sqrt{\ell_t} =  n^{-1/4} +   \frac{2 c_T}{\xi_T} d_W\big( \hat{\mu}^n_t , \mu_t \big)      \bigg\rbrace  .
\end{align}
Next write
\[
q_t = n^{-1}  \sum_{j\in I_n}\big( (u^j_{e,t})^2 + (u^j_{i,t})^2 \big).
\]
%and it follows from the Cauchy-Schwarz inequality that
%\begin{align}
%d_W\big( \hat{\mu}^n_t , \tilde{\mu}^n_t \big) \leq \sqrt{q_t}.
%\end{align}
\begin{lemma}\label{Lemma Bound q t n quarter}
There exists a constant $C > 0$ (independent of $n$) such that
\begin{align}
\lsup{n} n^{-1/2} \log \mathbb{P}\bigg( \text{ There exists }t\leq \zeta_n \text{ such that }q_t \geq C n^{-1/4} \bigg) < 0.
\end{align}
\end{lemma}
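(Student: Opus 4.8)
\textbf{Proof plan for Lemma \ref{Lemma Bound q t n quarter}.}

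The plan is to derive a closed differential inequality for $q_t$ up to the stopping time $\zeta_n$ and then apply an exponential (large-deviations-style) martingale estimate. First I would apply Itô's formula to $q_t = n^{-1}\sum_{j\in I_n}\big((u^j_{e,t})^2 + (u^j_{i,t})^2\big)$ using the SDE for $du^j_{\alpha,t}$ displayed just above the statement. The drift terms split into (i) the local difference $f_\alpha(z^j_{\alpha,t}) - f_\alpha(\bar z^j_{\alpha,t})$, which by Lipschitz continuity of $f_\alpha$ is bounded by $C|z^j_{\alpha,t} - \bar z^j_{\alpha,t}|$, and $z^j_{\alpha,t}-\bar z^j_{\alpha,t} = u^j_{\alpha,t} + \sum_a h_a(x^j_n)(v^a_\alpha(t)-\bar v^a_\alpha(t))$, so this contributes $\le C q_t + C\ell_t$ after using boundedness of the $h_a$ and Cauchy--Schwarz; and (ii) the global correction term involving $Q^{-1}$ and sums over $k$, which, using $\det Q > 1/2$ and boundedness of $h_a$, is again controlled by $C q_t + C\ell_t$. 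The diffusion coefficient $\sigma_\alpha(x^j_n,z^j_{\alpha,t}) - \sigma_\alpha(x^j_n,\bar z^j_{\alpha,t})$ is Lipschitz in its second argument (since $\sigma_\alpha \in \mathcal{C}^2$ with, effectively, bounded derivative on the relevant range — or one truncates), so the Itô correction $\tfrac12 n^{-1}\sum_j (\sigma_\alpha(\cdot) - \sigma_\alpha(\cdot))^2$ is bounded by $C q_t + C\ell_t$ as well. Altogether
\[
dq_t \le \big( C q_t + C\ell_t \big)\,dt + dN_t,
\]
where $N_t$ is a martingale with quadratic variation $d\langle N\rangle_t \le C n^{-1}(q_t + \ell_t)\,q_t\, dt$ (the extra $q_t$ and $n^{-1}$ come from the structure $n^{-1}\sum_j u^j_{\alpha,t}(\sigma_\alpha - \sigma_\alpha)\,dW^j_{\alpha,t}$, each summand of size $O(|u^j|^2)$, with $n^{-1}$ normalization and independence across $j$).

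Second, on the event $t\le \zeta_n$ we have by definition of $\zeta_n$ that $\sqrt{\ell_t} \le n^{-1/4} + \tfrac{2c_T}{\xi_T} d_W(\hat\mu^n_t,\mu_t)$, and by Lemma \ref{Lemma Law of Large Numbers} together with Lemma \ref{Lemma Bound q t n quarter}'s own bootstrap structure (or directly, since $d_W(\hat\mu^n_t,\mu_t) \le d_W(\hat\mu^n_t,\tilde\mu^n_t) + d_W(\tilde\mu^n_t,\mu_t) \le \sqrt{q_t} + o(1)$ using that a coupling matching indices $j$ gives $d_W(\hat\mu^n_t,\tilde\mu^n_t)^2 \le n^{-1}\sum_j(|u^j_{e,t}|^2 + |u^j_{i,t}|^2) = q_t$), one gets $\ell_t \le C n^{-1/2} + C q_t + o(1)$. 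Feeding this back, $dq_t \le (C q_t + C n^{-1/2} + o(1))\,dt + dN_t$ with $\langle N\rangle$ controlled similarly. Then I would run an exponential martingale argument: fix $\theta = \theta(n)$ growing like $n^{1/2}$ (or apply Freidlin--Wentzell / exponential Chebyshev directly), and show that $\exp(\theta(q_t - C't n^{-1/2}) - \tfrac12\theta^2\langle N\rangle_t)$ is a supermartingale, so $\mathbb{P}(\sup_{t\le\zeta_n} q_t \ge C n^{-1/4})$ decays like $\exp(-cn^{1/2})$ for a suitable constant, using that the Grönwall-type deterministic part only produces an $O(n^{-1/2})$ contribution over $[0,T]$ while the target threshold $n^{-1/4}$ is much larger, leaving a genuine exponential gap. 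This is essentially the same mechanism as in Corollary \ref{Corollary Bound ell t}, and I would follow that template.

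The main obstacle I anticipate is handling the diffusion martingale's quadratic variation carefully enough to get the sharp $n^{-1/2}$ rate in the exponent rather than a weaker rate. The point is that the noise term in $du^j_{\alpha,t}$ is $O(|u^j|)$ (it vanishes when $u^j = 0$), so $\langle N\rangle_t \lesssim n^{-1}\sum_j |u^j_t|^2 \cdot \sup_j|u^j_t|$, and on the stopped interval $\sup_j |u^j_t|^2$ must itself be controlled — ideally by $q_t$ plus lower order, which requires either an a priori $L^\infty$-type bound on the $u^j$ or an additional truncation/localization argument. One clean route is to note that on $\{t \le \zeta_n\}$ with the bootstrap hypothesis $q_t < C n^{-1/4}$, each individual $|u^j_t|^2 \le n q_t \le C n^{3/4}$ is far too crude; instead I would argue that the martingale increments are bounded using the boundedness of $\sigma_\alpha$ itself (so $\sigma_\alpha(z) - \sigma_\alpha(\bar z)$ is bounded by $2C_\sigma$ outright, giving $d\langle N\rangle_t \le C n^{-1} q_t\, dt$ unconditionally), which sacrifices nothing and yields $\langle N\rangle_T \le C n^{-1}\int_0^T q_s\, ds$. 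With this the exponential supermartingale closes at rate $n^{1/2}$: taking $\theta \sim n^{1/2}$, the quadratic correction $\theta^2 \langle N\rangle \sim n \cdot n^{-1} \int q \sim \int q$ is absorbed, and the threshold event $q_{\zeta_n}\ge C n^{-1/4}$ costs $\theta \cdot n^{-1/4} \sim n^{1/4}$ — wait, that is only $n^{1/4}$, not $n^{1/2}$; so in fact one should take $\theta \sim n^{3/4}$, check $\theta^2 n^{-1} = n^{1/2}$ still matches the $\int q$ scaling on $\{q \le \epsilon_T\}$, and then $\theta n^{-1/4} = n^{1/2}$ gives the claimed $n^{-1/2}\log\mathbb{P} \to$ negative constant. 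Getting these exponents to line up consistently — in particular reconciling the deterministic Grönwall growth, the martingale quadratic variation, and the threshold — is the delicate bookkeeping that constitutes the real work of the proof.
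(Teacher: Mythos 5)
Your proposal is correct in substance and follows the same skeleton as the paper's proof: It\^o's formula applied to $q_t$, a Lipschitz bound giving drift $\leq C(q_t+\ell_t)$, the crucial use of the uniform bound on $\sigma_\alpha$ (rather than its Lipschitz constant) so that the martingale part has quadratic variation $O(n^{-1}q_t)$, the definition of $\zeta_n$ to trade $\ell_t$ for $n^{-1/4}+d_W(\hat\mu^n_t,\mu_t)\leq n^{-1/4}+\sqrt{q_t}+d_W(\tilde\mu^n_t,\mu_t)$, and a sub-Gaussian estimate at rate $n^{1/2}$. The only real divergence is the final probabilistic device. The paper passes to $r_t=\sqrt{q_t}$, which makes the martingale term have quadratic variation bounded \emph{deterministically} by $4C_\sigma^2 t/n$; a time-change plus Doob's inequality then controls its supremum by $n^{-1/4}$ off an event of probability $e^{-cn^{1/2}}$, and a pathwise Gr\"onwall argument on the good event yields $\sup_t r_t\lesssim n^{-1/4}$ (in fact $q_t\lesssim n^{-1/2}$). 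Your exponential-supermartingale route with $\theta\sim n^{3/4}$ reaches the same rate, but note the naive version does not close: the drift contribution $\theta\int_0^T Cq_s\,ds$ is of the same order $n^{1/2}$ as the threshold cost $\theta\cdot Cn^{-1/4}$ and scales with the threshold constant, so you need the standard discounted exponent (work with $\theta e^{-Lt}q_t$, or iterate over short time intervals) to absorb the linear-in-$q$ drift --- exactly the ``bookkeeping'' you flag; the paper's square-root trick sidesteps this because Gr\"onwall is applied pathwise after the martingale has been tamed. One further caveat: your appeal to Lemma \ref{Lemma Law of Large Numbers} only supplies an almost-sure $o(1)$ with no rate, which strictly speaking cannot be inserted into a bound whose failure probability must decay like $e^{-cn^{1/2}}$; one would need a quantitative concentration estimate for $\sup_{t\leq T}d_W(\tilde\mu^n_t,\mu_t)$ at scale roughly $n^{-1/8}$. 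The paper's own proof is silent on this same term, so this is an inherited rather than a new gap, but it is worth acknowledging if you write the argument out.
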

%
%For $a > 0$, define the indicator function
%\begin{multline}
%\mathcal{Z}^j_a = \chi \bigg\lbrace \text{ There exists }s < t \leq T \wedge \tau \text{ such that for some }\alpha \in \lbrace e,i \rbrace, \; \xi^j_{\alpha,t} - \xi^j_{\alpha,s} \geq a\\ \text{ and for all }r \in [s,t] \text{ either } u^j_{\alpha,r}  \geq \exp(a)\phi_r \text{ or }
%  u^j_{\alpha,r}   \leq -\exp(a) \phi_r \bigg\rbrace
%\end{multline}
%and define the approximation
%\begin{multline}
%\tilde{\mathcal{Z}}^j_a = \chi \bigg\lbrace \text{ There exists }j\geq 1 \text{ such that for some }s \in [\tilde{\tau}_j , \tau_{j+1}], \\ \text{ it holds that } \big| \tilde{W}^j_{\alpha}\big( c (s -  \tilde{\tau}_j ) \big) \big|  \geq a - cT  \bigg\rbrace
%\end{multline}
%
%
%Define the event
%\begin{align}
%\mathcal{Y}_{a,b} = \bigg\lbrace n^{-1}\sum_{j\in I_n}\mathcal{Z}^j_a \geq b \bigg\rbrace
%\end{align}
We can now prove Theorem \ref{Theorem Bound Empirical Measure Convergence}.
\begin{proof}
Thanks to the Triangle Inequality,
\begin{align}
\sup_{t\leq T} d_W\big( \hat{\mu}^n_t , \mu_t \big)  \leq  \sup_{t\leq T}  d_W\big( \tilde{\mu}^n_t , \mu_t \big) +  \sup_{t\leq T}  d_W\big( \tilde{\mu}^n_t , \hat{\mu}^n_t \big) .
\end{align}
Thanks to Lemma \ref{Lemma Law of Large Numbers}, it suffices that we prove that $\mathbb{P}$-almost-surely,
\begin{align}
\lim_{n\to\infty}  \sup_{t\leq T}  d_W\big( \tilde{\mu}^n_t , \hat{\mu}^n_t \big) =& 0 \text{ and } \label{eq: second difference of empirical measures} \\
\lim_{n\to\infty} \sup_{t\leq T} \ell(t) =& 0.
\end{align}
Now Jensen's Inequality implies that
\begin{align}
d_W\big( \hat{\mu}^n_t , \tilde{\mu}^n_t  \big) \leq & \bigg( n^{-1}\sum_{j\in I_n}\bigg\lbrace (\bar{y}^j_{e,t} - y^j_{e,t})^2  + (\bar{y}^j_{i,t} - y^j_{i,t})^2 \bigg\rbrace \bigg)^{1/2} \nonumber \\
=& \sqrt{q_t}.
\end{align}
It thus suffices that we prove that
\begin{align}
\lim_{n\to\infty}  \sup_{t\leq T} q_t =& 0 \text{ and } \label{eq: second difference of empirical measures two} \\
\lim_{n\to\infty} \sup_{t\leq T} \ell(t) =& 0. \label{eq: ell t to zero}
\end{align}
Define the stopping time
\[
\phi = \inf \big\lbrace t \leq T \; : \; \sqrt{q_t} = \epsilon_T \big\rbrace .
\]
 It follows from Corollary \ref{Corollary Bound ell t} and Lemma \ref{Lemma Bound q t n quarter} that $\mathbb{P}$-almost-surely, for all $t\leq \phi$,
 for all large enough $n$,
 \begin{align}
 \sqrt{\ell_t} &\leq  n^{-1/4} +  2 \frac{c_T}{\xi_T} \sqrt{q_t} \\
 q_t &\leq C n^{-1/4} .
 \end{align}
We thus find that for all $t\leq \phi$,
\begin{align}
 \sqrt{\ell_t} \leq n^{-1/4} +  2\sqrt{C} \frac{c_T}{\xi_T} n^{-1/8}.
\end{align} 
Thus for all large enough $n$, it holds that $\phi  = T$ and \eqref{eq: second difference of empirical measures two} and \eqref{eq: ell t to zero} both hold.

\end{proof}
 
 \subsection{Proof Details} \label{Subsection Proof Details}
 
 We outline the proof of Lemma \ref{Lemma Bound the frowth of ell t}.
  \begin{proof}
%   The expressions in \eqref{eq: Jacobian e} and \eqref{eq: Jacobian e} imply that the map
%  \begin{align}
%  (v_e,v_i,\mu) \mapsto \mathcal{J}(v_e,v_i,\mu)
%  \end{align}
%  is continuous, and it is linear as a function of $\mu$.
Using the expressions in \eqref{eq: dv et} and \eqref{eq: dv it}, thanks to Ito's Lemam it holds that
\begin{multline}
 d\ell_t   =   2\sum_{a=1}^M \sum_{\alpha \in \lbrace e,i \rbrace} \bigg( v^a_{\alpha,t} - \bar{v}^a_{\alpha,t} \bigg)\bigg\lbrace - \frac{d\bar{v}^a_{\alpha,t}}{dt} + n^{-1} \sum_{j\in I_n} \sum_{b=1}^M \mathcal{Q}^{-1}_{ab} h_b(x^j_n)\bigg\lbrace f_\alpha(z^j_{\alpha,t})  \\ + n^{-1/2} \sum_{k\in I_n} \bigg(  \mathcal{K}_{ee}(x^j_n , x^k_n) G_{ee}(z^k_{e,t}) - \mathcal{K}_{ei}(x^j_n , x^k_n)  G_{ei}(z^k_{i,t}) \bigg) \bigg\rbrace  \bigg\rbrace dt  \\
  +2\sum_{a=1}^M \sum_{\alpha\in \lbrace e,i \rbrace} \big( v^a_{\alpha,t} - \bar{v}^a_{\alpha,t} \big) d\bar{w}^a_{\alpha,t}  
  + \sum_{a=1}^M \sum_{\alpha\in \lbrace e,i \rbrace} Q^a_{\alpha,t} dt,
  \end{multline}
  where $Q^a_{\alpha,t}$ is the derivative of the quadratic variation of $\bar{w}^a_{\alpha,t}$. We assume that $t$ is such that
  \begin{align}
\sup_{r\leq t} d_W\big( \hat{\mu}^n_r , \mu_r \big) &\leq \epsilon_T \text{ and } \label{eq: ell bound cond 1 1222}\\
\sup_{r\leq t} \ell_r &\leq \epsilon_T. \label{eq: ell upperbound temporary first}
\end{align} 
 Thanks to the Mean-Value Theorem, we find that for some $\lambda_t \in [0,1]$, writing  $\tilde{v}(t) = \lambda v(t) + (1-\lambda) \bar{v}(t)$, it holds that for $a\leq M$ and $\alpha\in \lbrace e,i \rbrace$,
 \begin{align} \label{eq: bound G difference 1}
 \mathcal{G}^a_{\alpha}\big(v_e(t),v_i(t),\hat{\mu}^n_t\big) -  \mathcal{G}^a_{\alpha}\big(\bar{v}_e(t) ,\bar{v}_i(t),\hat{\mu}^n_t\big) = \sum_{b=1}^M \sum_{\beta \in \lbrace e,i \rbrace} \mathcal{J}^{ab}_{\alpha\beta}\big( \tilde{v}_e(t), \tilde{v}_i(t) , \hat{\mu}^n_t\big)\big( v^b_{\beta}(t) - \bar{v}^b_{\beta}(t) \big)
% +  \mathcal{G}^a_{\alpha}\big( \tilde{v}_e(t),\tilde{v}_i(t),\tilde{\mu}_t\big) %-  \mathcal{G}^a_{\alpha}\big(\bar{v}_e(t) ,\bar{v}_i(t),\mu_t\big) 
 \end{align}

Now, the eigenvalues of a matrix depend continuously on its entries. Since the eigenvalues of $\mathcal{J}( \bar{v}_e(t),\bar{v}_i(t) ,\mu_t)$ must have real component less than or equal to $-\xi_T$, as long as $\epsilon_T$ is small enough it must hold that the real components of the eigenvalues of $\mathcal{J}( \tilde{v}_e(t),\tilde{v}_i(t),\hat{\mu}^n_t)$ are less than or equal to $-\xi_T / 2$.  Furthermore, since $ \mathcal{G}^a_{\alpha}\big(\bar{v}_e(t) ,\bar{v}_i(t),\mu_t\big) = 0$, it holds that
 \begin{align}
\big| \mathcal{G}^a_{\alpha}\big(\bar{v}_e(t) ,\bar{v}_i(t),\hat{\mu}^n_t\big) \big| =&  \big| \mathcal{G}^a_{\alpha}\big(\bar{v}_e(t) ,\bar{v}_i(t),\hat{\mu}^n_t\big) -  \mathcal{G}^a_{\alpha}\big(\bar{v}_e(t) ,\bar{v}_i(t),\mu_t\big) \big| \nonumber \\
\leq & c_T d_W\big( \hat{\mu}^n_t , \mu_t \big) \label{eq:G difference bound final}
 \end{align}
 for some constant $c_T > 0$. It thus follows from \eqref{eq: bound G difference 1} -\eqref{eq:G difference bound final} that there exists a constant $c_T$ such that
 \begin{multline}
    \frac{2}{\sqrt{n}}\sum_{a=1}^M \sum_{\alpha \in \lbrace e,i \rbrace} \bigg( v^a_{\alpha,t} - \bar{v}^a_{\alpha,t} \bigg) \sum_{k\in I_n} \bigg(  \mathcal{K}_{ee}(x^j_n , x^k_n) G_{ee}(z^k_{e,t}) - \mathcal{K}_{ei}(x^j_n , x^k_n)  G_{ei}(z^k_{i,t}) \bigg)\\
    \leq \sqrt{n}c_T \sqrt{\ell(t)} d_W\big( \hat{\mu}^n_t , \mu_t \big) -\xi_T \frac{\sqrt{n}}{2} \ell(t). 
 \end{multline}

 It is immediate from the expressions in Corollary \ref{Corollary Autonomous Dynamics} (which is in turn an immediate corollary of the existence results in Lemma \ref{Lemma Existence of Hydro Limit}) that
 \begin{align}
\sup_{t\leq T} \bigg| \frac{d\bar{v}^a_{\alpha,t}}{dt} \bigg| < \infty .
 \end{align} 
 It follows immediately from our assumption that the functions are upperbounded that
 \begin{align}
 \bigg| n^{-1} \sum_{j\in I_n} \sum_{b=1}^M \mathcal{Q}^{-1}_{ab} h_b(x^j_n) f_e(z^j_{e,t}) \bigg| &\leq \rm{Const} \\
 \bigg| n^{-1} \sum_{j\in I_n} \sum_{b=1}^M \mathcal{Q}^{-1}_{ab} h_b(x^j_n) f_i(z^j_{i,t}) \bigg| &\leq \rm{Const} .
 \end{align} 
The quadratic variation of $\bar{w}^a_{e,t}$ upto time $t$ can be seen to be
\begin{align}
n^{-2}\sum_{j\in I_n} \int_0^t \bigg(  \sum_{b=1}^M \mathcal{Q}^{-1}_{ab} h_b(x^j_n) \sigma(z^j_{e,s}) \bigg)^2 ds,
\end{align}
which is upperbounded by $\rm{Const} \times    t / n$, since $|h_b|$ and $|\sigma_e|$ are upperbounded. The quadratic variation of $\bar{w}^a_{i,t}$ is also upperbounded by $\rm{Const} \times    t / n$ for the same reason. Finally, the quadratic variation of
\begin{align}
2\sum_{a=1}^M \sum_{\alpha\in \lbrace e,i \rbrace} \big( v^a_{\alpha,t} - \bar{v}^a_{\alpha,t} \big) d\bar{w}^a_{\alpha,t} 
\end{align}
is upperbounded by 
\[
n^{-1} c_T \int_0^t \ell(s) ds.
\]
Combining the above estimates, we obtain the Lemma.
 \end{proof}
 
 We next prove Lemma \ref{Lemma Bound q t n quarter}.
\begin{proof}
It follows from Ito's Lemma that
\begin{multline*}
dq_t =  \frac{2}{n} \sum_{j\in I_n}\sum_{\alpha\in \lbrace e,i \rbrace} u^j_{\alpha,t}\bigg(f_\alpha(z^{j}_{\alpha,t}) - f_\alpha(\bar{z}^{j}_{\alpha,t}) \\ - n^{-1} \sum_{k\in I_n} \sum_{a,b = 1}^{M} Q^{-1}_{ab} h_a(x^j_{n})h_b(x^k_{n})\bigg\lbrace f_\alpha(z^{k}_{\alpha,t}) - f_\alpha(\bar{z}^{k}_{\alpha,t})  \bigg\rbrace \bigg) dt \\
+\frac{1}{n} \sum_{j\in I_n} \sum_{\alpha\in \lbrace e,i \rbrace} \bigg(\sigma_\alpha(x^j_n , z^{j}_{\alpha,t}) -  \sigma_\alpha(x^j_n, \bar{z}^{j}_{\alpha,t}) \bigg)^2 dt
+ \frac{2}{n}\sum_{j\in I_n}\sum_{\alpha\in \lbrace e,i \rbrace}  u^j_{\alpha,t} \bigg(\sigma_\alpha(x^j_n , z^{j}_{\alpha,t}) -  \sigma_\alpha(x^j_n, \bar{z}^{j}_{\alpha,t}) \bigg) dW^j_{\alpha,t}.
\end{multline*}
Employing the fact that (i) the functions are globally Lipschitz and (ii) the Cauchy-Schwarz Inequality, we find that there exists a constant $C > 0$ (this constant is chosen independently of $n$) such that
\begin{equation}
dq_t \leq C\bigg(   q_t +      \| v_t - \bar{v}_t \|^2 \bigg) dt+\frac{2}{n}\sum_{j\in I_n}\sum_{\alpha\in \lbrace e,i \rbrace}  u^j_{\alpha,t} \bigg(\sigma_\alpha(x^j_n , z^{j}_{\alpha,t}) -  \sigma_\alpha(x^j_n , \bar{z}^{j}_{\alpha,t}) \bigg) dW^j_{\alpha,t}.
\end{equation}
Define 
\begin{equation}
r_t =  q_t^{1/2} .
\end{equation}
The function $x \mapsto \sqrt{x}$ has a negative second derivative, and thus by Ito's Lemma we find that
\begin{align}
dr_t \leq \bigg( C r_t +  C  \| v_t - \bar{v}_t \| r_t^{-1}   \bigg) dt + d\grave{w}_t , \label{eq: r dt inequality}
\end{align}
where $\grave{w}_t$ is an $\mathcal{F}$-adapted martingale with quadratic variation 
\begin{align}
o_t =& \frac{1}{n^2}\sum_{j\in I_n} \sum_{\alpha\in \lbrace e,i \rbrace}  \int_0^t q_s^{-1}  (u^j_{\alpha,s})^2 \bigg(\sigma_\alpha(x^j_n , z^{j}_{\alpha,s}) -  \sigma_\alpha(x^j_n , \bar{z}^{j}_{\alpha,s}) \bigg)^2  ds.
\end{align}
Since $\sigma$ is uniformly upperbounded by $C_{\sigma}$, it holds that for all $t \leq T$,
\begin{align}
o_t \leq  \frac{4t}{n}C_{\sigma}^2.
\end{align}
Using the time-rescaled representation of a Martingale \cite{Karatzas1991}, we thus find obtain that for a standard Brownian Motion $w(t)$,
\begin{align}
\mathbb{P}\bigg( \sup_{\zeta \leq t\leq \iota}\big| \grave{w}_{t} - \grave{w}_{\zeta} \big| \geq n^{-1/4} \bigg) \leq \mathbb{P}\bigg( \sup_{ t\leq T}\big| w\big( 4t C_{\sigma}^2 / n \big) \big| \geq n^{-1/4} \bigg) \\
\leq \rm{Const} \exp\bigg(  - \rm{Const} n^{1/2}  \bigg)
\end{align}
thanks to Doob's Submartingale Inequality. Furthermore, if
\begin{align}
  \sup_{\zeta \leq t\leq \iota}\big| \grave{w}_t - \grave{w}_{\zeta} \big| \leq n^{-1/4} \label{eq: grave w t bound}
  \end{align}
  then necessarily for all $t\leq \zeta_n$, there is a constant $\tilde{C}$ such that
\begin{equation}
 r_t \leq  \int_0^t \bigg( C r_s +  \tilde{C} n^{-1/4} + \tilde{C} r_s  \bigg) ds +  n^{-1/4}. \label{eq: r dt inequality 3}
\end{equation}
It follows from an application of Gronwall's Inequality to \eqref{eq: r dt inequality 3} that for all $t\leq \zeta_n$, if \eqref{eq: grave w t bound} holds, then
\begin{align}
\sup_{t \leq T} r_t \leq \rm{Const}\times n^{-1/4}
\end{align}
This implies the Lemma.
\end{proof}
   
 We next state the proof of Corollary \ref{Corollary Bound ell t}.
\begin{proof}
Let 
\begin{align}
\tau = \inf\bigg\lbrace t\geq 0 \; : \;   \ell_t = \epsilon_T \text{ or }d_W\big( \hat{\mu}^n_t , \mu_t \big) = \epsilon_T  \bigg\rbrace .
\end{align}
We must show that
\begin{align}
\lsup{n} n^{-1/3} \log \mathbb{P}\bigg( \text{ For some }t\leq \tau, \text{ it holds that }\sqrt{\ell_t} >n^{-1/4} +  4 \frac{c_T}{\xi_T} d_W\big( \hat{\mu}^n_t , \mu_t \big) \bigg) < 0.
\end{align}
%Using a union-of-events bound, 
% \begin{align}
% \mathbb{P}\bigg( \sup_{t\leq T}\big| \tilde{w}_t \big| \geq n^{-1/3} / T \bigg) \leq  \mathbb{P}\bigg( \sup_{t\leq T} \tilde{w}_t   \geq n^{-1/4} / T \bigg) + 
%  \mathbb{P}\bigg( \inf_{t\leq T} \tilde{w}_t   \leq -n^{-1/4} / T \bigg). \label{eq: trmp decomposition}
% \end{align}
 Let $W(t)$ be a standard $\mathbb{R}$-valued Brownian Motion. Using the time-rescaled representation of a continuous Martingale to find that
 \begin{align}
  \mathbb{P}\bigg( \sup_{t\leq \tau \wedge T} \big| \tilde{w}_t \big|   \geq n^{-1/3}/2 \bigg) &\leq  2\mathbb{P}\bigg( \sup_{t\leq T} W\big( n^{-1}  c_T t \epsilon_T \big)   \geq n^{-1/3} /2 \bigg) \nonumber \\
  &\leq \exp\big( - n^{-1/3} \tilde{c} \big), \label{eq: n minus third c}
 \end{align}
 for some constant $\tilde{c} > 0 $ that is independent of $n$. We henceforth assume that
 \begin{align}
 \sup_{\iota \leq t\leq \zeta \wedge T \wedge \tau} \big| \tilde{w}_t - \tilde{w}_{\iota}\big|   \leq    n^{-1/3} . \label{eq: henceforth eq}
 \end{align}
 It therefore follows from Lemma \ref{Lemma Bound the frowth of ell t} that for all $\tilde{t} < t\leq T\wedge \tau$,
\begin{equation}
\ell_t  \leq \ell_{\tilde{t}}+ 2n^{-1/3} 
+ \int_{\tilde{t}}^t \bigg\lbrace \sqrt{\ell}_s c_T   + \sqrt{n}\bigg( c_T\sqrt{\ell_s} d_W\big( \hat{\mu}^n_s , \mu_s \big)  - \frac{  \xi_T}{2} \ell_s \bigg)  \bigg\rbrace ds . \label{eq: intermediate ell t ell tilde t inequality}
\end{equation}
Suppose for a contradiction that there exists $\iota \leq \tilde{t} < t \leq T\wedge \tau$ such that for all $s \in [\tilde{t}, t]$, it holds that
\begin{align}
\frac{\xi_T}{2}\ell_{\tilde{t}} &= 2  c_T \sqrt{\ell_{\tilde{t}}} d_W\big( \hat{\mu}^n_{\tilde{t}} , \mu_{\tilde{t}} \big) + \frac{1}{4} n^{-1/4} \label{eq: final to contradict 1}\\
\frac{\xi_T}{2} \ell_{s} & > 2 c_T  \sqrt{\ell_{s}}  d_W\big( \hat{\mu}^n_s , \mu_s \big) +\frac{1}{4} n^{-1/4} \\
\frac{\xi_T}{2}  \ell_{t} &= 2 c_T  \sqrt{\ell_{t}} d_W\big( \hat{\mu}^n_t , \mu_t \big) +  n^{-1/4} \label{eq: final to contradict 3}
\end{align}
It then follows from \eqref{eq: intermediate ell t ell tilde t inequality} that
\begin{align}
 \ell_t  -  \ell_{\tilde{t}}  \leq &  2n^{-1/3}+ c_T\int_{\tilde{t}}^t \sqrt{\ell_s}  ds - \frac{  \epsilon_T}{2}(t-\tilde{t})n^{1/4} \\
\leq & 2n^{-1/3} + c_T \epsilon_T( t- \tilde{t}) - \frac{  \epsilon_T}{2}(t-\tilde{t})n^{1/4} \label{eq: final to contradict 4}
\end{align}
since for all $t \leq \tau$, $\sqrt{\ell_s}  \leq \epsilon_T$. It also follows from \eqref{eq: final to contradict 1} - \eqref{eq: final to contradict 3} that
\begin{align}
\ell_{t} - \ell_{\tilde{t}} = \frac{3}{4} n^{-1/4}. \label{eq: final to contradict 5}
\end{align}
However once $n$ is large enough that $2n^{-1/3} < \frac{3}{4} n^{-1/4}$ and $n^{1/4} / 2 > c_T$, then \eqref{eq: final to contradict 4} contradicts \eqref{eq: final to contradict 5}. We thus find that, for all large enough $n$, it is impossible that \eqref{eq: final to contradict 1}-\eqref{eq: final to contradict 3} hold, as well as \eqref{eq: henceforth eq}. Since the stochastic processes are all continuous-in-time, it is therefore impossible that there exists $t\leq T \wedge \tau$ such that \eqref{eq: final to contradict 4} holds. We thus find that
\begin{multline}
\lsup{n} n^{-1/3} \log \mathbb{P}\bigg( \text{ For some }t\leq \tau, \text{ it holds that }\sqrt{\ell_t} >n^{-1/4} +  4 \frac{c_T}{\xi_T} d_W\big( \hat{\mu}^n_t , \mu_t \big) \bigg) \\
\leq \lsup{n} n^{-1/3} \log \mathbb{P}\bigg( \sup_{\iota \leq t\leq \zeta \wedge T \wedge \tau} \big| \tilde{w}_t - \tilde{w}_{\iota}\big|   >    n^{-1/3} \bigg) \\
\leq - \rm{Const},
\end{multline}
thanks to \eqref{eq: n minus third c}.
%\begin{align}
%\int_{\tilde{t}}^t \bigg\lbrace \sqrt{\ell}_s c_T   + \sqrt{n}\bigg( c_T\sqrt{\ell_s} d_W\big( \hat{\mu}^n_s , \mu_s \big)  - \frac{  \xi_T}{2} \ell_s \bigg)  \bigg\rbrace ds > 0.
%\leq & (t - \tilde{t})\bigg(  2 \frac{c_T^2}{\xi_T} d_W\big( \hat{\mu}^n_t , \mu_t \big) +c_T n^{-1/4} -  \frac{\xi_T}{2} n^{1/4}   \bigg).
%\end{align}
%There must therefore
%But once $n$ is large enough, the derivative of the RHS is $<0$, and it is therefore impossible that $\sqrt{\ell_t} - \sqrt{\ell_{\tilde{t}}} > 0$.
\end{proof}

We finish with a proof of Lemma \ref{Lemma Existence of Hydro Limit}.
\begin{proof}
We start by proving that there exists a unique solution to the dynamical system in Corollary \ref{Corollary Autonomous Dynamics}. It follows immediately from this that for all $1\leq a \leq M$ and $\alpha \in \lbrace e,i \rbrace$,
  \begin{align}
 \frac{d}{dt}\mathcal{G}_{\alpha}^a( \bar{v}_t, \mu_t ) = 0,
 \end{align}
 which means that any solution $\bar{v}_t$ must also satisfy the requirements of Lemma \ref{Lemma Existence of Hydro Limit}.

For any $\bar{v} \in \mathcal{C}\big( [0,T], \mathcal{C}_M(\mathcal{E}) \big)$, let $\Psi(\bar{v}) \in \mathcal{C}\big( [0,T], \mathcal{P}\big( \mathbb{R}^2 \big) \big)$ be such that $\Psi(\bar{v}) := \big( \Psi_t(\bar{v}) \big)_{t\in [0,T]}$, and $\Psi_t(\bar{v})$ is the law of $(y_{e,t}, y_{i,t})$. We let $d_T$ be the metric on $ \mathcal{C}\big( [0,T], \mathcal{P}\big( \mathbb{R}^2 \big) \big)$ such that
\begin{align}
d_T\big( \mu_{[0,T]} , \nu_{[0,T]} \big) = \sup_{t\leq T} d_W\big( \mu_t, \nu_t \big),
\end{align}
and $d_W$ is the Wasserstein distance on $\mathcal{P}(\mathbb{R}^2)$. It is straightforward to show using Gronwall's Lemma that there exists a constant $C> 0$ such that
\begin{align}
d_T\big( \Psi(\bar{v}) , \Psi(\tilde{v}) \big) \leq C \sup_{t\leq T} \| \bar{v} - \tilde{v} \|,
\end{align}
where
\begin{align}
\| \bar{v} - \tilde{v} \| = \sup_{1\leq a \leq M} \bigg| \int_{\mathcal{E}} h_a(x)\big(\bar{v}_e(x) - \tilde{v}_e(x) \big) \kappa(dx) \bigg|
+ \sup_{1\leq a \leq M} \bigg| \int_{\mathcal{E}} h_a(x)\big(\bar{v}_i(x) - \tilde{v}_i(x) \big) \kappa(dx) \bigg|.
\end{align}
We next define, for $1\leq a \leq M$ and $\alpha \in \lbrace e,i \rbrace$,
\begin{align}
\Gamma_{\alpha,t}^a &: \mathcal{C}\big( [0,T], \mathcal{C}_M(\mathcal{E}) \big)^2  \mapsto \mathbb{R} \\
\Gamma_{\alpha,t}^a\big(\bar{v}_e, \bar{v}_i\big) &=  - \sum_{b=1}^M \sum_{\beta\in \lbrace e,i \rbrace} \mathcal{L}^{ab}_{\alpha\beta} \big( \bar{v}_e(t), \bar{v}_i(t) , \Psi_t(\bar{v}) \big) \mathcal{H}_{\beta}^b\big( \bar{v}_e(t), \bar{v}_i(t) ,  \Psi_t(\bar{v}) \big). 
\end{align}
We need to prove that there exists a solution to the $2M$ equations, for $\alpha\in \lbrace e,i \rbrace$ and $1\leq a \leq M$,
\begin{align}
\frac{d\bar{v}^a_{\alpha}}{dt} = \Gamma_{\alpha,t}^a(\bar{v}) \label{eq: to show solution Picard}
\end{align}
Since every function in the definition of $\Gamma_{\alpha,t}^a$ is uniformly bounded, there exists a global constant $c_{\delta} > 0$ such that as long as for all $t\leq T$ it holds that
\begin{align}
\det\big( \mathcal{J}(\bar{v}_e(t), \bar{v}_i(t), \Psi_t(\bar{v}) \big) \geq \delta .
\end{align}
then necessarily
\begin{align}
\sup_{\alpha\in \lbrace e,i \rbrace} \sup_{1\leq a \leq M}\big| \Gamma_{\alpha,t}^a\big(\bar{v}_e, \bar{v}_i\big) - \Gamma_{\alpha,t}^a\big(\tilde{v}_e, \tilde{v}_i\big)\big| \leq c_{\delta} \sup_{s\leq t}\| \bar{v}_s - \tilde{v}_s \|. \label{eq: Lipschitz Picard}
\end{align}
It now follows using Picard Iterations that, for any $\delta > 0$, there is a unique solution to \eqref{eq: to show solution Picard} upto the first time $T$ that 
\begin{align}
\det\big( \mathcal{J}(\bar{v}_e(T), \bar{v}_i(T), \Psi_T(\bar{v}) \big) = \delta .
\end{align}
Write such $T := T_{\delta}$. We next claim that $\eta = \sup_{\delta > 0}T_{\delta}$. It is immediate that if $\eta < \infty$, then necessarily 
\[
\lim_{T\to\eta^-} \det\big( \mathcal{J}(\bar{v}_T , \mu_T) \big) = 0.
\]
Thus $\eta$ is the maximal time that a balanced solution can exist.
\end{proof}

\textit{Acknowledgements:}

Thanks to Rainer Engelken (Columbia University) for a helpful discussion and for supplying some references.
 
 \bibliographystyle{plain}
\bibliography{library,balancedbib}

\end{document}